\definecolor{newcolor}{rgb}{.8,.349,.1}
\crefname{equation}{}{}
\crefname{figure}{Fig.}{Figs.}
\crefname{appendix}{}{}
\crefname{table}{Tab.}{Tabs.}
\Crefname{ALC@unique}{Line}{Lines} 
\def\du{[\![}
\def\df{]\!]}
\def\ol{\overline}
\def\etab{\boldsymbol{\eta}}
\def\psib{\boldsymbol{\psi}}
\def\bxi{{\boldsymbol \xi}}
\def\bcW{\boldsymbol{\cal W}}
\def\p{\mathrm{p}}
\def\T{\mathrm{T}}
\def\s{\mathrm{s}}
\newcommand{\red}[1]{\textcolor{black}{#1}}
\newcommand{\blue}[1]{\textcolor{black}{#1}}
\newcommand{\orange}[1]{\textcolor{black}{#1}}
\newtheorem{lemma}{Lemma}[section]
\newtheorem{theorem}{Theorem}[section]
\newdefinition{remark}{Remark}[section]
\newproof{proof}{Proof}
\DeclareMathAlphabet\mathbfcal{OMS}{cmsy}{b}{n}
\def \LOCALFIGPATHRP {./}
\def \LOCALFIGPATHSBINS {./}
\def \LOCALFIGPATHSBIKT {./}
\def \LOCALFIGPATHRMI {./}
\begin{document}


\begin{frontmatter}

\title{Entropy stable, robust and high-order DGSEM for the compressible multicomponent Euler equations}

\author[rvt1]{Florent Renac\corref{cor1}}
\ead{florent.renac@onera.fr}
\cortext[cor1]{Corresponding author. Tel.: +33 1 46 73 37 44; fax.: +33 1 46 73 41 66.}
\address[rvt1]{DAAA, ONERA, Universit\'e Paris Saclay, F-92322 Ch\^atillon, France}


\begin{abstract}
This work concerns the numerical approximation of a multicomponent compressible Euler system for a fluid mixture in multiple space dimensions on unstructured meshes with a high-order discontinuous Galerkin spectral element method (DGSEM). We first derive an entropy stable (ES) and robust (i.e., that preserves the positivity of the partial densities and internal energy) three-point finite volume scheme using relaxation-based approximate Riemann solvers from Bouchut [Nonlinear stability of finite volume methods for hyperbolic conservation laws and well-balanced schemes for sources, Frontiers in Mathematics, Birkhauser, 2004] and Coquel and Perthame [\textit{SIAM J. Numer. Anal.}, 35 (1998), 2223--2249]. Then, we consider the DGSEM based on collocation of quadrature and interpolation points which relies on the framework introduced by Fisher and Carpenter [\textit{J. Comput. Phys.}, 252 (2013), 518--557] and Gassner [\textit{SIAM J. Sci. Comput.}, 35 (2013), A1233--A1253]. We replace the physical fluxes in the integrals over discretization elements by entropy conservative numerical fluxes [Tadmor, \textit{Math. Comput.}, 49 (1987), 91--103], while ES numerical fluxes are used at element interfaces. We thus derive a two-point numerical flux satisfying the Tadmor's entropy conservation condition and use the numerical flux from the three-point scheme as ES flux. Time discretization is performed with a strong-stability preserving Runge-Kutta scheme. We then derive conditions on the numerical parameters to guaranty a semi-discrete entropy inequality as well as positivity of the cell average of the partial densities and internal energy of the fully discrete DGSEM at any approximation order. The later results allow to use existing limiters in order to restore positivity of nodal values within elements. The scheme also resolves exactly stationary material interfaces. Numerical experiments in one and two space dimensions on flows with discontinuous solutions support the conclusions of our analysis and highlight stability, robustness and high resolution of the scheme.
\end{abstract}

\begin{keyword}
\MSC 65M12\sep 65M70\sep 76T10
  
Compressible multicomponent flows \sep entropy stable scheme \sep discontinuous Galerkin method \sep summation-by-parts \sep relaxation scheme
\end{keyword}

\end{frontmatter}


%
%
\section{Introduction}\label{sec:intro}

\subsection{Discretization of compressible multicomponent flows}

The accurate and robust (i.e., preserving the solution in the set of admissible states) simulation of compressible multicomponent flows with material interfaces is of strong significance in many engineering applications (e.g., combustion in propulsion systems, explosive detonation products) and scientific applications (e.g., flow instabilities, chemical reactions, phase changes). These flows may involve nonlinear waves such as shock and rarefaction waves, contact waves, material interfaces separating different fluids, and their interactions which usually trigger phenomena leading to small scale flow structures. The discussion in this paper focuses on the nonlinear analysis of a high-order discretization of the multicomponent compressible Euler system for a fluid mixture in multiple space dimensions. Numerical approximation of such flows based on interface capturing methods has been the subject of numerous works. Conservative schemes suffer from spurious oscillations at material interfaces due to the violation of pressure equilibrium \cite{abgrall_96}. Non-conservative formulations have been proposed \cite{abgrall_96,abgrall_karni_01,karni_mutlicomp_96} but do not ensure mass conservation of the different species. The ghost-fluid method \cite{fedkiw_etal_99} removes the pressure oscillations but the approximation of the interface is limited to first order, while the front tracking method \cite{glimm_etal_98} requires expensive operations and do not satisfy mass conservation. High-order discretizations have also been considered to resolve the broad range of scales usually present in these flows. Though not exhaustive, we refer to the works on finite differences \cite{latini_etal_07,kawai_terashima_11,mohaved_johnsen_13,capuano_etal_18}, finite volume methods \cite{xiong_etal_12,johnsen_etal_06,houim_kuo_11,vilar_etal_16,gouasmi_etal_20}, or discontinuous Galerkin (DG) methods \cite{h_de_frahan_etal_15,lv_ihme_14} and references therein. We here consider the discontinuous Galerkin spectral element method (DGSEM) based on collocation between interpolation and quadrature points \cite{despres98,kopriva_gassner10}. 

\subsection{Entropy conservative and entropy stable flux differencing schemes}\label{sec:intro_ECEF}

Using diagonal norm summation-by-parts (SBP) operators and the entropy conservative (EC) numerical fluxes from Tadmor \cite{tadmor87}, semi-discrete EC finite-difference and spectral collocation schemes have been derived in \cite{fisher_carpenter_13,carpenter_etal14} for nonlinear conservation laws and the DGSEM falls into this general framework of conservative elementwise flux differencing schemes. Entropy stable (ES), also known as entropy dissipative, DGSEM for the compressible Euler equations on hexahedral \cite{winters_etal_16} and triangular \cite{chen_shu_17} meshes have been proposed by using the same framework. The particular form of the SBP operators allows to take into account the numerical quadratures that approximate integrals in the numerical scheme compared to other techniques that require their exact evaluation to satisfy the entropy inequality \cite{jiang_shu94,hiltebrand_mishra_14}. The DGSEM thus provides a general framework for the design of ES schemes for nonlinear systems of conservation laws. An ES DGSEM for the discretization of general nonlinear hyperbolic systems in nonconservative form has been introduced in \cite{renac19} and applied to two-phase flow models \cite{renac19,coquel_etal_DGSEM_BN_21}. Numerical experiments in \cite{winters_etal_16,wintermeyer_etal_17,chen_shu_17,renac19,coquel_etal_DGSEM_BN_21} highlight the benefits on stability of the computations, though this not guarantees to preserve neither the entropy stability at the fully discrete level, nor robustness of the numerical solution. A fully discrete ES DGSEM has been proposed in \cite{friedrich_etal_19}, while ES and robust DGSEM have been proposed in \cite{despres98,renac17b} for the compressible Euler equations. 

A common way to design ES numerical fluxes in the sense of Tadmor \cite{tadmor87} is due to \cite{Roe_2006,ismail_roe_09} and consists in deriving EC numerical fluxes to which one adds upwind-type dissipation \cite{tadmor03,fjordholm_etal_12,carpenter_etal14,winters_etal_16,winters_etal_17,renac19,coquel_etal_DGSEM_BN_21,chandrashekar13,gouasmi_etal_20}. Nevertheless, this may cause difficulties to derive further properties of the scheme, such as preservation of invariant domains or bound preservation because the numerical flux may involve intricate operations of its arguments \cite{ismail_roe_09,chandrashekar13}. These properties are however important for the robustness and accuracy of the scheme. These fluxes are also non Lipschitz continuous which is required to apply existing results \cite{bouchut_04,hll_83,perthame_shu_96} that we will use in this work. Besides, for smooth solutions or large scale oscillations around discontinuities the DG approximation is known to become less sensitive to the numerical flux as the scheme accuracy increases \cite{qui_et-al06,renac15a}, but this is no longer the case when small scale flow features are involved and the numerical flux has a strong effect on their resolution \cite{moura_etal_17,chapelier_etal_14}. One has therefore to pay a lot of attention in the design of the numerical fluxes at interfaces. In the present work, we first design an ES and robust (i.e., that preserves positivity of the partial densities and internal energy) three-point scheme and then use it as a building block to define an ES, robust and accurate DGSEM. The multicomponent Euler system is therefore required to possess a convex entropy, which prevents the use of some models in the literature \cite{abgrall_96,karni_mutlicomp_96,ALLAIRE2002577}. \orange{The model is in conservative form and will be discretized by using a conservative scheme and will thus be prone to spurious oscillations at moving material interfaces separating phases with different thermodynamic properties. This drawback can be detrimental to applications involving fluids with highly nonlinear equations of state in which the coefficients depend on the thermodynamic variables as, for instance, in shock driven combustion problems, dynamics of explosive detonation products, etc. Let us stress however that for some non-genuinely multi-dimensional methods, spurious oscillations may appear even in non-conservative methods when the material interface is not aligned with the mesh making this property not useful in practice. Likewise, many research and industrial codes consider discretely conservative schemes and yet constitute relevant tools for the analysis: see, e.g., the codes THYC from EDF \cite{code_THYC_EDF} and FLICA from CEA for the simulation of water-vapor flows in nuclear power plants (see \cite[Sec.~2]{Ambroso2009738} and references therein), or the CHARME solver in the CEDRE software from ONERA for energetics and propulsion applications \cite{code_CEDRE}. Moreover, the present method will be shown} to satisfy other desirable properties (entropy stability, robustness, high-order accuracy) an to well reproduce the physical mechanisms of shock-material interface interaction problems on two-dimensional unstructured grids.


%
\subsection{Contributions of this work}

The thermodynamic properties of multicomponent flows depend on the mass fractions of the different phases which makes difficult the design of an ES and robust three-point scheme. An ES flux has been proposed in \cite{gouasmi_etal_20} that combines an EC flux with dissipation and was applied to high-order TecNO schemes \cite{fjordholm_etal_12}, but this prevent the derivation of a provably robust scheme. In \cite{dellacherie_03} a relaxation technique is applied to the multicomponent Euler system which allows the use of monocomponent ES schemes for each component. However, this technique does not hold for the separated fluid mixture in Eulerian coordinates under consideration in this work. Likewise, the use of simple wave solvers such as the HLL \cite{hll_83}, Roe \cite{Roe_1981}, Rusanov \cite{Rusanov1961}, or relaxation \cite{bouchut_04} schemes require the estimation from above of the maximum wave speeds in the Riemann problem for which fast estimates such as the two-rarefaction approximation \cite[Ch.~9]{toro_book}, \red{the iterative algorithm from \cite{guermond_popov_16} (see also the review \cite{TORO_bound_wave_speed_2020} and references therein)} do not exist to the best of our knowledge. Here we consider the energy relaxation technique introduced in \cite{coquel_perthame_98} for the approximation of the monocomponent compressible Euler equations with general equation of states. The method allows the design of ES and robust numerical schemes by using classical numerical fluxes for polytropic gases. The work in \cite{renac_etal_ES_noneq_21} extended this technique to the compressible multicomponent Euler equations for a gas mixture in thermal non-equilibrium and derived a general way to define an ES finite volume scheme from any scheme for the polytropic gas dynamics. We here follow this approach to derive an ES for our model from a scheme for the polytropic gas dynamics. However we will show that the entropy for the relaxation system is not strictly convex due to the closure for the fluid mixture in our model, which in turn prevents to derive a general framework for designing robust and ES fluxes. We here overcome this difficulty by using the approximate Riemann solver based on pressure relaxation from \cite{bouchut_04}. Relaxation schemes circumvent the difficulties in the treatment of nonlinearities associated to the equation of state by approximating the nonlinear system with a consistent linearly degenerate (LD) enlarged system with stiff relaxation source terms \cite{Jin_Xin_95,coquel_etal_01,chalons_coulombel08}. 


This ES numerical flux is used in the DGSEM at mesh interfaces, while the EC numerical flux from \cite{gouasmi_etal_20} is used within the discretization elements resulting in a semi-discrete ES scheme. We prove that the discrete scheme with explicit time integration exactly captures stationary contacts and stationary material interfaces at interpolation points. We further derive conditions on the time step to keep positivity of the cell-averaged partial densities and internal energy. This is achieved by extending the framework introduced in \cite{zhang2010positivity} for Cartesian meshes to unstructured meshes with straight-sided quadrangles. We indeed formulate the DGSEM for the cell-averaged solution as a convex combination of positive quantities under a CFL condition by using results from \cite{perthame_shu_96}.  We finally apply a posteriori limiters \cite{zhang2010positivity} that extend the positivity to nodal values within elements. 

The paper is organized as follows. \Cref{sec:HRM_model} presents the multicomponent compressible Euler system under consideration and some of its properties. We derive the EC two-point numerical flux and recall some properties of three-point schemes in \cref{sec:fluxes-FV-schemes}. We derive an ES relaxation-based three-point scheme in \cref{sec:relax_flux} that we then use as building block for the DGSEM introduced in \cref{sec:DG_discr}. We analyze the properties of the fully discrete DGSEM in \cref{sec:scheme_properties}. The results are assessed by numerical experiments in one and two space dimensions in \cref{sec:num_xp} and concluding remarks about this work are given in \cref{sec:conclusions}.

%
%
\section{Model problem}\label{sec:HRM_model}

%

Let $\Omega\subset\mathbb{R}^d$ be a bounded domain in $d$ space dimensions, we consider the IBVP described by the multicomponent compressible Euler system for a fluid mixture with $n_c$ components. This model is used for instance to simulate water-vapor flows where the vapor is in saturation state \cite{Ambroso2009738,Ambroso_etal_HEM_HRM_2007}. The problem reads
\begin{subequations}\label{eq:HRM_PDEs}
\begin{align}
 \partial_t{\bf u} + \nabla\cdot{\bf f}({\bf u}) &= 0, \quad \mbox{in }\Omega\times(0,\infty), \label{eq:HRM_PDEs-a} \\
 {\bf u}(\cdot,0) &= {\bf u}_{0}(\cdot),\quad\mbox{in }\Omega, \label{eq:HRM_PDEs-b}
\end{align}
\end{subequations}

\noindent with some boundary conditions to be prescribed on $\partial\Omega$ (see \cref{sec:num_xp}). Here
\begin{equation*}
 {\bf u} = \begin{pmatrix} \rho {\bf Y} \\ \rho \\ \rho{\bf v} \\ \rho E \end{pmatrix}, \quad
 {\bf f}({\bf u}) = \begin{pmatrix} \rho{\bf Y}{\bf v}^\top \\ \rho{\bf v}^\top \\ \rho{\bf v}{\bf v}^\top+\mathrm{p}{\bf I} \\(\rho E+\mathrm{p}){\bf v}^\top \end{pmatrix},
\end{equation*}

\noindent denote the conserved variables and the convective fluxes with ${\bf Y}=(Y_1,\dots,Y_{n_c-1})^\top$ the mass fractions of the $n_c-1$ first components with $\sum_{i=1}^{n_c}Y_i=1$; $\rho$, ${\bf v}$ in $\mathbb{R}^d$, and $E$ are the density, velocity vector, and total specific energy of the mixture, respectively. The mixture quantities are defined from quantities of the $n_c$ components through
\begin{equation}\label{eq:def-rho-rhoE-p}
 \sum_{i=1}^{n_c}\alpha_i=1, \quad \rho = \sum_{i=1}^{n_c}\alpha_i\rho_i=\rho\sum_{i=1}^{n_c}Y_i, \quad \rho E = \sum_{i=1}^{n_c}\alpha_i\rho_iE_i, \quad \mathrm{p} = \sum_{i=1}^{n_c}\alpha_i\mathrm{p}_i(\rho_i,e_i),
\end{equation}

\noindent where $E_{i}=e_i+\tfrac{{\bf v}\cdot{\bf v}}{2}$ and $\alpha_i=\rho Y_i/\rho_i$ is the void fraction of the i$th$ component. Note that we have $\rho E=\rho e+\rho \tfrac{{\bf v}\cdot{\bf v}}{2}$, where $\rho e=\sum_{i=1}^{n_c}\alpha_i\rho_ie_i$ denotes the internal energy of the mixture per unit volume. Equations \cref{eq:HRM_PDEs} are supplemented with polytropic ideal gas equations of states:
\begin{equation}\label{eq:EOS_SG}
 \mathrm{p}_i(\rho_i,e_i) = (\gamma_i-1)\rho_ie_i, \quad e_i = C_{v_i}\mathrm{T}_i, \quad 1\leq i\leq n_c,
\end{equation}

\noindent where $\gamma_i=C_{p_i}/C_{v_i}>1$ is the ratio of specific heats which are assumed to be positive constants of the model. The model assumes thermal and mechanical equilibria: 
\begin{equation}\label{eq:equ_temp_press}
 \mathrm{T}_i(\rho_i,e_i) =: \mathrm{T}({\bf Y},e), \quad
 \mathrm{p}_i(\rho_i,e_i) =: \mathrm{p}({\bf Y},\rho,e), \quad 1\leq i\leq n_c,
\end{equation}

\noindent thus leading to 
\begin{equation}\label{eq:mixture_eos}
 e({\bf Y},\mathrm{T}) = C_v({\bf Y})\mathrm{T}, \quad
 \mathrm{p}({\bf Y},\rho,e) = \big(\gamma({\bf Y})-1\big)\rho e = \rho r({\bf Y})\mathrm{T}({\bf Y},e), 
\end{equation}

\noindent with 
\begin{equation}\label{eq:mixture_r_Cv_Cp}
 r({\bf Y})=C_p({\bf Y})-C_v({\bf Y}), \quad C_p({\bf Y})=\sum_{i=1}^{n_c}Y_i C_{p_i}, \quad C_v({\bf Y})=\sum_{i=1}^{n_c} Y_i C_{v_i},
\end{equation}

\noindent and
\begin{equation}\label{eq:mixture_gamma}
 \gamma({\bf Y})=\frac{C_p({\bf Y})}{C_v({\bf Y})}.
\end{equation}

Note that due to \cref{eq:EOS_SG,eq:equ_temp_press} we have
\begin{equation}\label{eq:partial_densities}
 \rho r({\bf Y}) = \rho_ir_i, \quad r_i:=C_{p_i}-C_{v_i}=(\gamma_i-1)C_{v_i}, \quad 1\leq i\leq n_c,
\end{equation}

\noindent so the model allows to evaluate explicitly the partial densities and so the void fractions:
\begin{equation*}
  \alpha_i = \frac{\rho Y_i}{\rho_i}=\frac{r_i}{r({\bf Y})}Y_i \quad \forall 1\leq i\leq n_c,
\end{equation*}

System \cref{eq:HRM_PDEs-a} is hyperbolic in the direction ${\bf n}$ in $\mathbb{R}^d$ over the set of states
\begin{equation}\label{eq:HRM-set-of-states}
 \Omega^a=\{{\bf u}\in\mathbb{R}^{n_c+d+1}:\; 0\leq Y_{1\leq i\leq n_c}\leq 1, \rho>0, e=E-\tfrac{{\bf v}\cdot{\bf v}}{2}>0\},
\end{equation}

\noindent with eigenvalues $\lambda_1={\bf v}\cdot{\bf n}-c\leq \lambda_2=\dots=\lambda_{n_c+d}={\bf v}\cdot{\bf n}\leq\lambda_{n_c+d+1}={\bf v}\cdot{\bf n}+c$, where $\lambda_{1,n_c+d+1}$ are associated to genuinely nonlinear fields and $\lambda_{2\leq i\leq n_c+d}$ to LD fields. The sound speed reads
\begin{equation}\label{eq:sound_speed}
 c({\bf Y},e) = \sqrt{\gamma({\bf Y})\big(\gamma({\bf Y})-1)e}.
\end{equation}

From \cref{eq:mixture_r_Cv_Cp,eq:mixture_gamma}, we have
\begin{equation}\label{eq:gamma-bound}
 \gamma({\bf Y})\leq\gamma_{max} \quad \forall 0\leq Y_{1\leq i\leq n_c}\leq1, \quad \gamma_{max}:=\max_{1\leq i\leq n_c}\gamma_i.
\end{equation}

Admissible weak solutions to \cref{eq:HRM_PDEs} should satisfy the entropy inequality
\begin{equation}\label{eq:entropy_ineq_cont}
 \partial_t\eta({\bf u}) + \nabla\cdot{\bf q}({\bf u}) \leq 0
\end{equation}

\noindent for the entropy -- entropy flux  pair
\begin{equation}\label{eq:HRM_entropy_pair}
 \eta({\bf u}) =-\rho\mathrm{s}({\bf u}), \quad {\bf q}({\bf u}) =-\rho\mathrm{s}({\bf u}){\bf v}, \quad \mathrm{s}\equiv \sum_{i=1}^{n_c}Y_i\mathrm{s}_i,
\end{equation}

\noindent where the specific partial entropies are defined by the second law of thermodynamics and using \cref{eq:equ_temp_press}:
\begin{equation}\label{eq:2nd_principe}
 \mathrm{T}d\mathrm{s}_i = de_i - \frac{\mathrm{p}}{\rho_i^2}d\rho_i, \quad 1\leq i\leq n_c,
\end{equation}

\noindent and read 

\begin{equation}\label{eq:partial_entropy}
 \mathrm{s}_i(\rho_i,e_i)=C_{v_i}\ln\Big(\tfrac{e_i}{\rho_i^{\gamma_i-1}}\Big)+\mathrm{s}^\infty_{i} \overset{\cref{eq:EOS_SG}}{=} -C_{v_i}\ln\theta - r_i\ln\rho_i + C_{v_i}\ln C_{v_i}+\mathrm{s}^\infty_{i},
\end{equation}

\noindent with $\theta=\tfrac{1}{\mathrm{T}}$ and $\mathrm{s}^\infty_{i}$ an additive constant. Using \cref{eq:partial_densities} and $\T=\tfrac{e_i}{C_{v_i}}=\tfrac{e}{C_{v}({\bf Y})}$, the mixture entropy in \cref{eq:HRM_entropy_pair} becomes

\begin{subequations}\label{eq:mixture_entropy}
\begin{align}
  \mathrm{s}({\bf Y},\tau,e) &= \sum_{i=1}^{n_c}Y_i\Big(C_{v_i}\ln\big(\tfrac{C_{v_i}}{C_v({\bf Y})}e\big) - r_i\ln\big(\tfrac{r({\bf Y})}{r_i}\rho\big) +\mathrm{s}^\infty_{i}\Big) = r({\bf Y})\ln\tau + C_v({\bf Y})\ln e+K({\bf Y}),  \label{eq:mixture_entropy-a}\\
  K({\bf Y}) &= \sum_{i=1}^{n_c}Y_i\Big(C_{v_i}\ln\big(\tfrac{C_{v_i}}{C_v({\bf Y})}\big(\tfrac{r_i}{r({\bf Y})}\big)^{\gamma_i-1}\big)+\mathrm{s}^\infty_{i}\Big),
\end{align}
\end{subequations}

\noindent where $\tau=\tfrac{1}{\rho}$ denotes the covolume of the mixture.

Using \cref{eq:HRM_entropy_pair}, the differential forms \cref{eq:2nd_principe}, and $\theta\big(e_i-\tfrac{\p}{\rho_i}\big)=C_{p_i}$, we get

\begin{align*}
 d(\rho\s) &= \sum_{i=1}^{n_c} \rho_i\theta\Big(de_i-\tfrac{\p}{\rho_i^2}d\rho_i\Big) + \s_id\rho_i = \theta d(\rho e) + \sum_{i=1}^{n_c} (\s_i-C_{p_i})d\rho_i \\
	&= \theta\Big(d(\rho E) - {\bf v}\cdot d(\rho{\bf v}) + \tfrac{{\bf v}\cdot{\bf v}}{2}d\rho\Big) + \sum_{i=1}^{n_c-1} (\s_i-C_{p_i})d\rho_i + (\s_{n_c}-C_{p_{n_c}})\Big(d\rho-\sum_{i=1}^{n_c-1}d\rho_i\Big),
\end{align*}

\noindent so the entropy variables read
\begin{equation}\label{eq:entropy_var}
  \etab'({\bf u}) = \begin{pmatrix} \mathrm{s}_{n_c}-\mathrm{s}_1+C_{p_1}-C_{p_{n_c}} \\ \vdots \\ \mathrm{s}_{n_c}-\mathrm{s}_{n_c-1}+C_{p_{n_c-1}}-C_{p_{n_c}} \\ C_{p_{n_c}}-\mathrm{s}_{n_c}-\tfrac{{\bf v}\cdot{\bf v}}{2}\theta \\ \theta{\bf v} \\ -\theta \end{pmatrix},
\end{equation}

\noindent and the entropy potential is easily obtained:

\begin{align}
 \psib({\bf u}) &:= {\bf f}({\bf u})^\top\etab'({\bf u})-{\bf q}({\bf u}) \nonumber \\
 &= \sum_{i=1}^{n_c-1} (\s_{n_c}-\s_i-C_{p_{n_c}}+C_{p_i})\rho_i{\bf v} + \Big(C_{p_{n_c}}-\s_{n_c}-\tfrac{{\bf v}\cdot{\bf v}}{2}\theta\Big)\rho{\bf v}+ \theta(\rho{\bf v}{\bf v}^\top+\p{\bf I}){\bf v}-\theta(\rho E+\p){\bf v} - \rho\s({\bf u}){\bf v} \nonumber\\
 &= \sum_{i=1}^{n_c}C_{p_i}\rho_i{\bf v} - \rho e\theta{\bf v} = r({\bf Y})\rho{\bf v}. \label{eq:def-potential}
\end{align}

%
%
\section{Two-point numerical fluxes and associated finite volume schemes}\label{sec:fluxes-FV-schemes}

%
%
\subsection{Entropy conservative and entropy stable numerical fluxes}\label{sec:EC-ES-fluxes}

In the following, we design numerical fluxes for the space discretization of \cref{eq:HRM_PDEs}. We adopt the usual terminology from \cite{tadmor87} and denote by {\it entropy conservative} for the pair $(\eta,{\bf q})$ in \cref{eq:entropy_ineq_cont}, a numerical flux ${\bf h}_{ec}$ satisfying
\begin{equation}\label{eq:entropy_conserv_flux}
 \du \etab'({\bf u}) \df \cdot {\bf h}_{ec}({\bf u}^-,{\bf u}^+,{\bf n})  = \du \psib({\bf u})\cdot{\bf n} \df \quad \forall {\bf u}^\pm\in\Omega^a,
\end{equation}

\noindent where $\du a \df = a^+-a^-$ and ${\bf n}$ in $\mathbb{R}^d$ is a unit vector. The flux will also be required to be symmetric in the sense \cite{fisher_carpenter_13,chen_shu_17}:
\begin{equation}\label{eq:symmetric_flux}
 {\bf h}_{ec}({\bf u}^-,{\bf u}^+,{\bf n})={\bf h}_{ec}({\bf u}^+,{\bf u}^-,{\bf n}) \quad \forall {\bf u}^\pm\in\Omega^a.
\end{equation}

Then, a numerical flux ${\bf h}$ will be {\it entropy stable} when
\begin{equation}\label{eq:entropy_stable_flux}
 \du \etab'({\bf u}) \df \cdot {\bf h}({\bf u}^-,{\bf u}^+,{\bf n}) \leq \du \psib({\bf u})\cdot{\bf n} \df \quad \forall {\bf u}^\pm\in\Omega^a.
\end{equation}

Both numerical fluxes ${\bf h}$ and ${\bf h}_{ec}$ are assumed to be consistent:
\begin{equation}\label{eq:consistent_flux}
 {\bf h}_{ec}({\bf u},{\bf u},{\bf n}) = {\bf h}({\bf u},{\bf u},{\bf n}) = {\bf f}({\bf u})\cdot{\bf n} \quad \forall {\bf u}\in\Omega^a,
\end{equation}

\noindent and conservative:
\begin{equation}\label{eq:conservative_flux}
   {\bf h}_{ec}({\bf u}^-,{\bf u}^+,{\bf n}) =-{\bf h}_{ec}({\bf u}^+,{\bf u}^-,-{\bf n}), \quad {\bf h}({\bf u}^-,{\bf u}^+,{\bf n}) =-{\bf h}({\bf u}^+,{\bf u}^-,-{\bf n}) \quad \forall {\bf u}^\pm\in\Omega^a,
\end{equation}

\noindent and the ES numerical flux ${\bf h}$ is further assumed to be Lipschitz continuous in the first two arguments. In this work we use numerical fluxes that satisfy these properties.

An EC flux for the multicomponent Euler equations has been proposed in \cite{gouasmi_etal_20} and we apply their method to derive an EC flux in \cref{sec:appendix_EC_flux} for the sake of comparison with our approach. Below we propose another EC flux that takes into account the particular choice of densities in \cref{eq:HRM_PDEs}. Indeed the choice of the numerical flux is not unique and depends on the choice of variables we use to express the entropy pair and entropy potential \cite{chandrashekar13,ranocha_18}.

\begin{lemma}\label{th:EC_flux}
The following numerical flux is symmetric \cref{eq:symmetric_flux}, consistent \cref{eq:consistent_flux}, and EC \cref{eq:entropy_conserv_flux} for the HRM model \cref{eq:HRM_PDEs} and pair $(\eta,{\bf q})$ in \cref{eq:entropy_ineq_cont}:
\begin{equation}\label{eq:EC_flux}
 {\bf h}_{ec}({\bf u}^-,{\bf u}^+,{\bf n}) = \begin{pmatrix} h_{\rho Y_{1} }({\bf u}^-,{\bf u}^+,{\bf n}) \\ \vdots \\ h_{\rho Y_{n_c-1} }({\bf u}^-,{\bf u}^+,{\bf n}) \\ h_{\rho}({\bf u}^-,{\bf u}^+,{\bf n}) \\ h_{\rho}({\bf u}^-,{\bf u}^+,{\bf n})\ol{{\bf v}} + \tfrac{\ol{\mathrm{p}\theta}}{\ol{\theta}}{\bf n} \\  \displaystyle\sum_{i=1}^{n_c-1}\tfrac{C_{v_i}-C_{v_{n_c}}}{\widehat{\theta}}h_{\rho Y_i}({\bf u}^-,{\bf u}^+,{\bf n}) + \Big(\tfrac{C_{v_{n_c}}}{\widehat{\theta}}+\tfrac{{\bf v}^{-}\cdot{\bf v}^{+}}{2}\Big)h_{\rho}({\bf u}^-,{\bf u}^+,{\bf n}) + \tfrac{\ol{\mathrm{p}\theta}}{\ol{\theta}}\ol{{\bf v}}\cdot{\bf n} \end{pmatrix}, \; \begin{aligned}  &h_{\rho Y_i}({\bf u}^-,{\bf u}^+,{\bf n}) = \tfrac{r_{n_c}(\widehat{\rho_{n_c}}-\widehat{\rho})}{r(\ol{{\bf Y}})-r_{n_c}}\ol{Y_i}\ol{{\bf v}}\cdot{\bf n}, \\ & h_{\rho}(\cdot,\cdot,\cdot) = \blue{\widehat{\rho}\;\ol{{\bf v}}}\cdot{\bf n},\end{aligned}
\end{equation}

\noindent where $\widehat{a}=\tfrac{\du a\df}{\du \ln a\df}$ denotes the logarithmic mean \cite{ismail_roe_09}, $\ol{a}=\tfrac{a^++a^-}{2}$ is the average operator, and $\theta=\tfrac{1}{\T}$.

\end{lemma}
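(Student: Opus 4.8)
The plan is to verify the three required properties in turn: symmetry \cref{eq:symmetric_flux} and consistency \cref{eq:consistent_flux} are immediate, and the entropy conservation condition \cref{eq:entropy_conserv_flux} carries the bulk of the work. \emph{Symmetry} follows at once because the logarithmic mean $\widehat{a}$, the arithmetic mean $\ol{a}$ and the product ${\bf v}^-\cdot{\bf v}^+$ are all invariant under the exchange ${\bf u}^-\leftrightarrow{\bf u}^+$, so every entry of \cref{eq:EC_flux} is symmetric. For \emph{consistency} I would set ${\bf u}^-={\bf u}^+={\bf u}$, so that $\widehat{a}\to a$ and $\ol{a}\to a$. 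The only entries needing more than inspection are $h_{\rho Y_i}$, where the relation $\rho r({\bf Y})=\rho_ir_i$ of \cref{eq:partial_densities} gives $r_{n_c}(\rho_{n_c}-\rho)=\rho\,(r({\bf Y})-r_{n_c})$ and hence $h_{\rho Y_i}\to\rho Y_i\,{\bf v}\cdot{\bf n}$, and the energy entry, which collapses to $(\rho E+\p)\,{\bf v}\cdot{\bf n}$ after using the affine identity $\sum_{i=1}^{n_c-1}(C_{v_i}-C_{v_{n_c}})Y_i=C_v({\bf Y})-C_{v_{n_c}}$ together with $e=C_v({\bf Y})\T$ from \cref{eq:mixture_eos}.

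The key preparatory step for entropy conservation is to linearize the entropy variables \cref{eq:entropy_var} in terms of a small set of logarithmic jumps. Using $\ln\rho_i=\ln\rho+\ln r({\bf Y})-\ln r_i$ (a consequence of \cref{eq:partial_densities}) in \cref{eq:partial_entropy}, I would rewrite $\s_i=-C_{v_i}\ln\theta-r_i\ln(\rho r({\bf Y}))+\text{const}$, which yields
\begin{align*}
 \du \s_{n_c}-\s_i\df &= (C_{v_i}-C_{v_{n_c}})\du\ln\theta\df + (r_i-r_{n_c})\du\ln(\rho r({\bf Y}))\df, \\
 \du C_{p_{n_c}}-\s_{n_c}\df &= C_{v_{n_c}}\du\ln\theta\df + r_{n_c}\du\ln(\rho r({\bf Y}))\df,
\end{align*}
so that, apart from the kinetic part $-\tfrac12\du\theta{\bf v}\cdot{\bf v}\df$ of the $\rho$-component, the jump $\du\etab'({\bf u})\df$ is expressed through the four jumps $\du\ln\theta\df$, $\du\ln(\rho r({\bf Y}))\df$, $\du\theta{\bf v}\df$ and $\du\theta\df$.

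I would then contract $\du\etab'({\bf u})\df$ with \cref{eq:EC_flux} and group the terms by type. First, the $\du\ln\theta\df$ contributions cancel: using $\du\theta\df/\widehat\theta=\du\ln\theta\df$, the coefficient $(C_{v_i}-C_{v_{n_c}})\du\ln\theta\df\,h_{\rho Y_i}$ from the mass-fraction variables is annihilated by the piece $\sum_i\tfrac{C_{v_i}-C_{v_{n_c}}}{\widehat\theta}h_{\rho Y_i}$ of the energy flux, and the $C_{v_{n_c}}\du\ln\theta\df\,h_\rho$ term is annihilated by $\tfrac{C_{v_{n_c}}}{\widehat\theta}h_\rho$ since $h_\rho=\widehat\rho\,\ol{{\bf v}}\cdot{\bf n}$. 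Next, the kinetic terms cancel through the identity $\ol{{\bf v}}\cdot\du\theta{\bf v}\df-\tfrac12\du\theta{\bf v}\cdot{\bf v}\df=\tfrac{{\bf v}^+\cdot{\bf v}^-}{2}\du\theta\df$, which matches the $\tfrac{{\bf v}^-\cdot{\bf v}^+}{2}h_\rho$ term in the energy flux. For the surviving density/mass-fraction terms I would use the affine identity $\sum_{i=1}^{n_c-1}(r_i-r_{n_c})\ol{Y_i}=r(\ol{\bf Y})-r_{n_c}$ to collapse $\sum_i(r_i-r_{n_c})h_{\rho Y_i}=r_{n_c}(\widehat{\rho_{n_c}}-\widehat\rho)\,\ol{{\bf v}}\cdot{\bf n}$, then the fact that the logarithmic mean commutes with multiplication by the constant $r_{n_c}$ together with $r_{n_c}\rho_{n_c}=\rho r({\bf Y})$ to write $r_{n_c}\widehat{\rho_{n_c}}=\widehat{\rho r({\bf Y})}$; combined with the $r_{n_c}\du\ln(\rho r({\bf Y}))\df\,h_\rho$ contribution and the log-mean identity $\widehat{a}\du\ln a\df=\du a\df$, these telescope to $\ol{{\bf v}}\cdot{\bf n}\,\du\rho r({\bf Y})\df$. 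The pressure terms from the momentum and energy fluxes combine through $\du\theta{\bf v}\df\cdot{\bf n}-\du\theta\df\,\ol{{\bf v}}\cdot{\bf n}=\ol\theta\,\du{\bf v}\cdot{\bf n}\df$ into $\ol{\p\theta}\,\du{\bf v}\cdot{\bf n}\df$. Collecting everything, the left-hand side of \cref{eq:entropy_conserv_flux} equals $\ol{{\bf v}}\cdot{\bf n}\,\du\rho r({\bf Y})\df+\ol{\p\theta}\,\du{\bf v}\cdot{\bf n}\df$, and the proof closes by invoking $\p\theta=\rho r({\bf Y})$ from \cref{eq:mixture_eos}, so that $\ol{\p\theta}=\ol{\rho r({\bf Y})}$ and the product rule identifies the expression with $\du\rho r({\bf Y})\,{\bf v}\cdot{\bf n}\df=\du\psib({\bf u})\cdot{\bf n}\df$ by \cref{eq:def-potential}.

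I expect the main obstacle to be the bookkeeping of the multicomponent contributions rather than any single hard estimate. In particular, the nonstandard prefactor of $h_{\rho Y_i}$ in \cref{eq:EC_flux} appears to be engineered precisely so that the affine identity for $r(\ol{\bf Y})$ and the constant-factoring property of the logarithmic mean make the $\du\ln(\rho r({\bf Y}))\df$ terms telescope cleanly; recognizing this structure, and spotting the algebraic coincidence $\p\theta=\rho r({\bf Y})$ that closes the pressure balance, is where the care is needed. Once the entropy variables are written in the linearized form above, the remaining manipulations are the standard cancellations familiar from single-component entropy-conservative Euler fluxes.
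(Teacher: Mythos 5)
Your proof is correct and follows essentially the same route as the paper's: symmetry and consistency by inspection using \cref{eq:partial_densities}, and entropy conservation by contracting $\du\etab'({\bf u})\df$ with the flux, grouping the jumps in $\ln\theta$, $\theta$, ${\bf v}$ and a common logarithmic density jump, then closing with the affine identity $\sum_{i=1}^{n_c-1}(r_i-r_{n_c})\ol{Y_i}=r(\ol{\bf Y})-r_{n_c}$ (the paper's \cref{eq:expand_r}) and the potential expansion \cref{eq:expand_psi}. Your bookkeeping via $\du\ln(\rho r({\bf Y}))\df$ coincides with the paper's $\du\ln\rho_{n_c}\df$ since $\rho r({\bf Y})=r_{n_c}\rho_{n_c}$, so the two computations are mirror images of one another.
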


\begin{proof}
Symmetry follows from the symmetry of the logarithmic mean and average operator. Then, observe that $h_{\rho Y_i}({\bf u},{\bf u},{\bf n}) = \tfrac{r_{n_c}(\rho_{n_c}-\rho)}{r({\bf Y})-r_{n_c}}Y_i{\bf v}\cdot{\bf n} \overset{\cref{eq:partial_densities}}{=} \tfrac{r({\bf Y})\rho-r_{n_c}\rho}{r({\bf Y})-r_{n_c}}Y_i{\bf v}\cdot{\bf n} = \rho Y_i {\bf v}\cdot{\bf n}$. Likewise, $h_{\rho E}({\bf u},{\bf u},{\bf n}) = \big(\sum_{i=1}^{n_c}C_{v_i}\T\rho Y_i+\tfrac{1}{2}\rho{\bf v}\cdot{\bf v}+\p\big){\bf v}\cdot{\bf n}=(\rho E+\p){\bf v}\cdot{\bf n}$ thus consistency follows.

From \cref{eq:partial_entropy} we expand $\du\mathrm{s}_i\df = -C_{v_i}\du\ln\theta\df - r_i\du\ln\rho_i\df$ and
\begin{equation}\label{eq:expand_psi}
 \du\psib({\bf u})\cdot{\bf n}\df \overset{\cref{eq:def-potential}}{=} \du r({\bf Y})\rho{\bf v}\cdot{\bf n}\df \overset{\cref{eq:mixture_eos}}{=} \big(\ol{\mathrm{p}\theta}\du {\bf v}\df+\du\rho r({\bf Y})\df\ol{\bf v}\big)\cdot{\bf n} \overset{\cref{eq:partial_densities}}{=} \ol{\mathrm{p}\theta}\du {\bf v}\df\cdot{\bf n} + r_{n_c}\du\rho_{n_c}\df\ol{\bf v}\cdot{\bf n},
\end{equation}

\noindent and using short notations for the flux components in \cref{eq:EC_flux} together with the observation that $\du\ln\rho_i\df = \du\ln\rho_{n_c}+\ln\tfrac{r_{n_c}}{r_i}\df = \du\ln\rho_{n_c}\df$ from \cref{eq:partial_densities}, we get
\begin{align*}
 \du \etab'({\bf u}) \df \cdot {\bf h}_{ec}({\bf u}^-,{\bf u}^+,{\bf n}) - \du\psib({\bf u})\cdot{\bf n}\df
 &\overset{\cref{eq:entropy_var}}{\underset{\cref{eq:expand_psi}}{=}} \sum_{i=1}^{n_c-1}\du (C_{v_i}-C_{v_{n_c}})\ln\theta+r_i\ln\rho_i-r_{n_c}\ln\rho_{n_c}\df h_{\rho Y_i} \\
 &+ \du C_{v_{n_c}}\ln\theta+r_{n_c}\ln\rho_{n_c}-\tfrac{{\bf v}\cdot{\bf v}}{2}\theta\df h_\rho + \du\theta{\bf v}\df\cdot{\bf h}_{\rho {\bf v}}-\du\theta\df h_{\rho E} \\
 &- \ol{\mathrm{p}\theta}\du {\bf v}\df\cdot{\bf n} - r_{n_c}\du\rho_{n_c}\df\ol{\bf v}\cdot{\bf n}\\
 &= \blue{\du\theta\df} \Big( \sum_{i=1}^{n_c-1} \tfrac{C_{v_i}-C_{v_{n_c}}}{\widehat\theta}h_{\rho Y_i} + \big(\tfrac{C_{v_{n_c}}}{\widehat\theta}-\ol{\tfrac{{\bf v}\cdot{\bf v}}{2}}\big)h_\rho + \ol{\bf v}\cdot{\bf h}_{\rho {\bf v}} - h_{\rho E} \Big) \\
 &+ \du\ln\rho_{n_c}\df \Big( \sum_{i=1}^{n_c-1} (r_i-r_{n_c})h_{\rho Y_i} + r_{n_c}h_\rho - r_{n_c}\widehat{\rho_{n_c}}\ol{\bf v}\cdot{\bf n} \Big) + \ol{\theta}\du{\bf v}\df\cdot\Big({\bf h}_{\rho {\bf v}}-h_\rho\ol{\bf v}-\tfrac{\ol{\p\theta}}{\ol{\theta}}{\bf n}\Big). 
\end{align*}

\blue{
Now observe that from \cref{eq:mixture_r_Cv_Cp} we have $r(\ol{\bf Y})=r_{n_c}+\sum_{i=1}^{n_c-1}\ol{Y_i}(r_i-r_{n_c})$, so we obtain 
\begin{equation}\label{eq:expand_r}
 \sum_{i=1}^{n_c-1} (r_i-r_{n_c})h_{\rho Y_i} = \sum_{i=1}^{n_c-1} (r_i-r_{n_c})\tfrac{r_{n_c}(\widehat{\rho_{n_c}}-\widehat{\rho})}{r(\ol{{\bf Y}})-r_{n_c}}\ol{Y_i}\ol{{\bf v}}\cdot{\bf n} = r_{n_c}(\widehat{\rho_{n_c}}-\widehat{\rho})\ol{{\bf v}}\cdot{\bf n} \sum_{i=1}^{n_c-1} \tfrac{(r_i-r_{n_c})\ol{Y_i}}{r(\ol{{\bf Y}})-r_{n_c}} = r_{n_c}(\widehat{\rho_{n_c}}-\widehat{\rho})\ol{{\bf v}}\cdot{\bf n},
\end{equation}
}

\noindent \blue{and using \cref{eq:EC_flux} and then this result we finally get}
\blue{
\begin{equation*}
 \du \etab'({\bf u}) \df \cdot {\bf h}_{ec}({\bf u}^-,{\bf u}^+,{\bf n}) - \du\psib({\bf u})\cdot{\bf n}\df \overset{\cref{eq:EC_flux}}{=} \du\theta\df \Big( -\ol{\tfrac{{\bf v}\cdot{\bf v}}{2}} - \tfrac{{\bf v}^-\cdot{\bf v}^+}{2} + \ol{\bf v}^2\Big)h_\rho + \du\ln\rho_{n_c}\df \Big( \sum_{i=1}^{n_c-1} (r_i-r_{n_c})h_{\rho Y_i} + r_{n_c}(\widehat{\rho}-\widehat{\rho_{n_c}})\ol{\bf v}\cdot{\bf n} \Big)  \overset{\cref{eq:expand_r}}{=} 0,
\end{equation*}
}

\noindent which ends the proof. \qed
\end{proof}


\blue{
\begin{remark}
The EC flux in \cref{eq:EC_flux} requires only three calls of the logarithmic mean for evaluating $\widehat{\rho_{n_c}}$, $\widehat{\rho}$, and $\widehat{\theta}$, compared to $n_c+1$ calls in \cref{eq:EC_flux_gouasmi} derived in \cref{sec:appendix_EC_flux}. It is thus cheaper for $n_c>2$ as this evaluation is computationally expensive. Moreover, as the logarithmic mean requires positive arguments to avoid floating point exceptions, \cref{eq:EC_flux} is also less sensitive to robustness issues.
\end{remark}
}

\blue{
\begin{remark}
 Due to the particular form of the convective terms in the momentum equations, the numerical flux \cref{eq:EC_flux} is formally kinetic energy preserving \cite{chandrashekar13} in the sense of \cite{jameson_KEP_08}: when ignoring boundary conditions, the global kinetic energy budget is only affected by the pressure work, not the transport terms. This property may however fail at the discrete level and numerical results show that the discretization of the pressure plays an important role for the discrete kinetic energy preservation \cite{winters_etal_16}.
\end{remark}
}

\subsection{Entropy stable and robust finite volume schemes}\label{sec:FV-schemes}

We first consider three-point numerical schemes of the form
\begin{align}
  {\bf U}_j^{n+1} &= {\bf U}_j^{n} - \tfrac{\Delta t}{\Delta x}\big({\bf h}({\bf U}_{j}^{n},{\bf U}_{j+1}^{n},{\bf n})-{\bf h}({\bf U}_{j-1}^{n},{\bf U}_{j}^{n},{\bf n})\big), \quad 0<\tfrac{\Delta t}{\Delta x}\max_{j\in\mathbb{Z}}|\lambda({\bf U}_{j}^{n})|\leq\tfrac{1}{2},  \label{eq:3pt-scheme-a} 
\end{align}

\noindent for the discretization of \cref{eq:HRM_PDEs-a} in one space dimension, where ${\bf h}$ is assumed to be consistent, conservative, and Lipschitz continuous. Here ${\bf U}_{j}^{n}$ approximates the averaged solution in the $j$-th cell at time $t^{(n)}$, $\Delta t$ and $\Delta x$ are the time and space steps, and $|\lambda(\cdot)|$ corresponds to the maximum absolute value of the wave speeds.  The scheme \cref{eq:3pt-scheme-a} is said to be {\it entropy stable} for the pair $(\eta,{\bf q})$ in \cref{eq:entropy_ineq_cont} if it satisfies the inequality
\begin{equation}\label{eq:3pt-scheme-ineq}
 \eta({\bf U}_j^{n+1})-\eta({\bf U}_j^{n}) + \tfrac{\Delta t}{\Delta x}\big(Q({\bf U}_{j}^{n},{\bf U}_{j+1}^{n},{\bf n})-Q({\bf U}_{j-1}^{n},{\bf U}_{j}^{n},{\bf n})\big) \leq 0, 
\end{equation}

\noindent with some consistent entropy numerical flux $Q({\bf u},{\bf u},{\bf n}) = {\bf q}({\bf u})\cdot{\bf n}$. 

Such schemes use necessarily ES numerical fluxes \cite[Lemma~2.8]{bouchut_04}. In this work, we found more convenient to prove that \cref{eq:3pt-scheme-a} satisfies \cref{eq:3pt-scheme-ineq} than to prove \cref{eq:entropy_stable_flux} for the numerical flux in \cref{eq:3pt-scheme-a}.

%

Likewise, the scheme \cref{eq:3pt-scheme-a} will be said to be {\it robust} or {\it positive} if the solution remains in the set of states \cref{eq:HRM-set-of-states}: ${\bf U}_{j\in\mathbb{Z}}^{n}$ in $\Omega^a$ implies ${\bf U}_{j\in\mathbb{Z}}^{n+1}$ in $\Omega^a$. By extension the associated numerical flux will be also described as {\it robust} or {\it positive}.

We now consider a numerical scheme for quadrilateral meshes $X_h\subset\mathbb{R}^2$ using the two-point numerical flux in \cref{eq:3pt-scheme-a}:
\begin{equation}\label{eq:2D-FV-scheme}
 {\bf U}_\kappa^{n+1} = {\bf U}_\kappa^{n} - \tfrac{\Delta t}{|\kappa|}\sum_{e\in\partial\kappa} |e|{\bf h}({\bf U}_\kappa^{n},{\bf U}_{\kappa_e^+}^{n},{\bf n}_e) \quad \forall \kappa\in X_h, n\geq 0,
\end{equation}

\noindent where ${\bf n}_e$ is the unit outward normal vector on the edge $e$ in $\partial\kappa$, and $\kappa_e^+$ the neighboring cell sharing the interface $e$ (see \cref{fig:quad-with-subtriangles}). Each element is shape-regular: the ratio of the radius of the largest inscribed ball to the diameter is bounded by below by a positive constant independent of the mesh. We will also use the next result which is an extension to quadrilaterals of results from \cite{perthame_shu_96}. We reproduce the proof in \cref{sec:proof_positive-2D-FVO2-scheme} for the sake of completeness as the original proof in \cite{perthame_shu_96} considered triangular meshes.

\begin{lemma}(Perthame \& Shu \cite[Th.~4]{perthame_shu_96})\label{th:positive-2D-FVO2-scheme}
 Let a three-point numerical scheme of the form \cref{eq:3pt-scheme-a} with a consistent \cref{eq:consistent_flux}, conservative \cref{eq:conservative_flux}, and Lipschitz continuous numerical flux ${\bf h}(\cdot,\cdot,\cdot)$ for the discretization of \cref{eq:HRM_PDEs-a} that satisfies positivity of the solution, ${\bf U}_{j\in\mathbb{Z}}^{n\geq0}$ in $\Omega^a$, under the CFL condition in \cref{eq:3pt-scheme-a}. Then, the scheme
\begin{equation}\label{eq:2D-FVO2-scheme}
 {\bf U}_\kappa^{n+1} = {\bf U}_\kappa^{n} - \tfrac{\Delta t}{|\kappa|}\sum_{e\in\partial\kappa} |e|{\bf h}({\bf U}_{\kappa_e^-}^{n},{\bf U}_{\kappa_e^+}^{n},{\bf n}_e), \quad \sum_{e\in\partial\kappa} \tfrac{|e|}{|\partial\kappa|}{\bf U}_{\kappa_e^-}^{n} = {\bf U}_\kappa^{n}, \quad |\partial\kappa|:=\sum_{e\in\partial\kappa}|e|, 
\end{equation}

\noindent on quadrilateral meshes is also robust, ${\bf U}_{\kappa\in X_h}^{n\geq0}$ in $\Omega^a$, under the condition
\begin{equation}\label{eq:CFL-positive-2D-FVO2-scheme}
  \Delta t \max_{\kappa\in X_h}\frac{|\partial\kappa|}{|\kappa|}\max_{e\in\partial\kappa}\frac{|\partial\kappa_e|}{|e|}\max_{f\in\partial\kappa}|\lambda({\bf U}_{\kappa_f^\pm}^{n})| \leq \frac{1}{2}, 
\end{equation}

\noindent with $\kappa=\cup_{e\in\partial\kappa}\kappa_e$ divided into sub-triangles as in \cref{fig:quad-with-subtriangles}.
\end{lemma}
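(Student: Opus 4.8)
The plan is to follow the invariant-domain / convex-combination strategy of Perthame and Shu: rewrite the cell-average update \cref{eq:2D-FVO2-scheme} as a convex combination of states that each lie in $\Omega^a$, and conclude by convexity of $\Omega^a$. Two elementary facts drive everything. First, the closed-polygon identity $\sum_{e\in\partial\kappa}|e|{\bf n}_e={\bf 0}$ (and the analogous identity on each sub-triangle) lets consistency \cref{eq:consistent_flux} turn an isolated numerical flux into a genuine flux \emph{difference}. Second, the three-point scheme \cref{eq:3pt-scheme-a} is assumed positive for an arbitrary unit vector ${\bf n}$, so the 1D hypothesis is available in every direction ${\bf n}_e$. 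I would first record that $\Omega^a$ in \cref{eq:HRM-set-of-states} is convex in the conserved variables: $\rho>0$ and the bounds on $\rho{\bf Y}$ are linear constraints, while $\rho e=\rho E-\tfrac{(\rho{\bf v})\cdot(\rho{\bf v})}{2\rho}$ is concave on $\{\rho>0\}$, so $\{\rho e>0\}$ is a convex superlevel set.

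Next I would isolate the building block: a one-sided update
$$ {\bf W}={\bf V}-\beta\big({\bf h}({\bf V},{\bf V}',{\bf n})-{\bf f}({\bf V})\cdot{\bf n}\big)\in\Omega^a \quad\text{whenever}\quad \beta\,\max\{|\lambda({\bf V})|,|\lambda({\bf V}')|\}\le\tfrac12. $$
Indeed, by consistency ${\bf f}({\bf V})\cdot{\bf n}={\bf h}({\bf V},{\bf V},{\bf n})$, so ${\bf W}$ is exactly the output of \cref{eq:3pt-scheme-a} applied to the data $(\dots,{\bf V},{\bf V},{\bf V}',\dots)$ with ratio $\Delta t/\Delta x=\beta$, hence in $\Omega^a$ by hypothesis. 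Summing such one-sided updates over the three edges of a single triangle $T$ and using $\sum_i|e_i|{\bf n}_i={\bf 0}$ then reproduces the first-order triangle scheme, whose update equals the convex combination $\sum_i\tfrac{|e_i|}{|\partial T|}\big[{\bf U}_T-\tfrac{|\partial T|\Delta t}{|T|}({\bf h}_i-{\bf f}({\bf U}_T)\cdot{\bf n}_i)\big]$ of members of $\Omega^a$, so it lies in $\Omega^a$ under $\tfrac{|\partial T|\Delta t}{|T|}\max_i|\lambda|\le\tfrac12$.

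The core is the geometric decomposition $\kappa=\cup_{e\in\partial\kappa}\kappa_e$ of \cref{fig:quad-with-subtriangles} into sub-triangles sharing an apex chosen so that $|\kappa_e|/|\kappa|=|e|/|\partial\kappa|$; this is precisely the weighting in the averaging constraint in \cref{eq:2D-FVO2-scheme}. To each $\kappa_e$ I associate the first-order triangle update ${\bf W}_e$ obtained by treating $\kappa_e$ as a control volume whose neighbour across the base $e$ is ${\bf U}_{\kappa_e^+}^{n}$ and whose neighbours across the two interior lateral edges are the adjacent inner states ${\bf U}_{\kappa_{e'}^-}^{n}$. By the previous step each ${\bf W}_e\in\Omega^a$ under $\tfrac{|\partial\kappa_e|\Delta t}{|\kappa_e|}\max_f|\lambda|\le\tfrac12$, and substituting $|\kappa_e|=|e|\,|\kappa|/|\partial\kappa|$ converts this into exactly \cref{eq:CFL-positive-2D-FVO2-scheme}. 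Finally I would form $\sum_e\tfrac{|\kappa_e|}{|\kappa|}{\bf W}_e$: the interior lateral fluxes cancel in pairs by conservation \cref{eq:conservative_flux}, the base fluxes reassemble $\tfrac{\Delta t}{|\kappa|}\sum_e|e|{\bf h}({\bf U}_{\kappa_e^-}^{n},{\bf U}_{\kappa_e^+}^{n},{\bf n}_e)$, and the state part collapses to ${\bf U}_\kappa^{n}$ by the averaging constraint, so $\sum_e\tfrac{|\kappa_e|}{|\kappa|}{\bf W}_e={\bf U}_\kappa^{n+1}$; convexity of $\Omega^a$ then yields ${\bf U}_\kappa^{n+1}\in\Omega^a$.

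The main obstacle is the bookkeeping that simultaneously removes the residual physical-flux terms and aligns the weights. Applying consistency on each sub-triangle produces spurious contributions $\propto{\bf f}({\bf U}_{\kappa_e^-}^{n})\cdot{\bf n}$ that do \emph{not} vanish edge-by-edge once the inner states ${\bf U}_{\kappa_e^-}^{n}$ differ from ${\bf U}_\kappa^{n}$ (the genuinely high-order situation); the resolution is to assign the \emph{actual} numerical fluxes, rather than consistent place-holders, to the interior lateral edges, so that these contributions telescope to zero through \cref{eq:conservative_flux} upon summation, while the apex choice $|\kappa_e|/|\kappa|=|e|/|\partial\kappa|$ is what at once matches the averaging constraint and turns the per-triangle CFL into \cref{eq:CFL-positive-2D-FVO2-scheme}. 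Care is also needed to invoke the 1D hypothesis in each direction ${\bf n}_e$ with the right local wave-speed estimate, which is why the condition carries the cell-wide maximum $\max_{f\in\partial\kappa}|\lambda({\bf U}_{\kappa_f^\pm}^{n})|$.
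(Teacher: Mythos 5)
Your overall strategy --- telescoping the interior fluxes by conservation \cref{eq:conservative_flux} and writing the update \cref{eq:2D-FVO2-scheme} as a convex combination of first-order sub-triangle updates --- is the same as the paper's, but the geometric step on which your bookkeeping hinges contains a genuine gap. You require the sub-triangles to share an apex $P$ chosen so that $|\kappa_e|/|\kappa|=|e|/|\partial\kappa|$ for every edge $e$. Since $|\kappa_e|=\tfrac12|e|\,d(P,e)$, where $d(P,e)$ is the distance from $P$ to the line supporting $e$, this is equivalent to asking for an interior point equidistant from all four side lines, at distance $2|\kappa|/|\partial\kappa|$, i.e., an incenter. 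Such a point exists for every triangle (which is why the original Perthame--Shu construction can use it), but for a quadrilateral it exists only when the quadrilateral is tangential (admits an inscribed circle), which by the Pitot theorem forces the sums of opposite side lengths to be equal; moreover, since adjacent sub-triangles must share their lateral edges, a common apex is unavoidable in any such tiling. A non-square rectangle already fails: no point of an $a\times b$ rectangle with $a\neq b$ is equidistant from all four sides. Consequently your convex weights $|\kappa_e|/|\kappa|$ cannot be made to match the averaging weights $|e|/|\partial\kappa|$ in \cref{eq:2D-FVO2-scheme}, and the state part of your combination $\sum_e\tfrac{|\kappa_e|}{|\kappa|}{\bf U}_{\kappa_e^-}^{n}$ does not collapse to ${\bf U}_\kappa^{n}$.

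The repair --- and this is how the paper's proof proceeds --- is to decouple the convex weights from the sub-triangle areas: keep the weights $|e|/|\partial\kappa|$ dictated by the averaging constraint and let each sub-triangle carry its own \emph{effective} time step. After adding the vanishing sum of interior fluxes, the update is regrouped as
\begin{equation*}
 {\bf U}_\kappa^{n+1} = \sum_{e\in\partial\kappa} \tfrac{|e|}{|\partial\kappa|}\Big({\bf U}_{\kappa_e^-}^{n} - \tfrac{|\partial\kappa|}{|e|}\tfrac{\Delta t}{|\kappa|}\sum_{f\in\partial\kappa_e}l_{ef}\,{\bf h}\big({\bf U}_{\kappa_e^-}^{n},\hat{\bf U}_{\kappa}^{ef,n},{\bf n}_{ef}\big)\Big),
\end{equation*}
with $l_{ee}=|e|$, ${\bf n}_{ee}={\bf n}_e$, $\hat{\bf U}_\kappa^{ee,n}={\bf U}_{\kappa_e^+}^{n}$ and $\hat{\bf U}_\kappa^{ef,n}={\bf U}_{\kappa_f^-}^{n}$ for $f\neq e$. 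Each bracket is then an instance of the first-order scheme \cref{eq:2D-FV-scheme} on $\kappa_e$ with time ratio $\Delta t_e/|\kappa_e|=\tfrac{|\partial\kappa|}{|e|}\tfrac{\Delta t}{|\kappa|}$, so that the area $|\kappa_e|$ cancels out of the CFL condition of \cref{th:positive-2D-scheme}, which becomes exactly \cref{eq:CFL-positive-2D-FVO2-scheme} with no constraint whatsoever on the position of the apex. Everything else in your argument --- the one-sided building block obtained from \cref{eq:3pt-scheme-a} with data $({\bf V},{\bf V},{\bf V}')$ and consistency \cref{eq:consistent_flux}, the pairwise cancellation of interior fluxes, and the reassembly of the boundary fluxes --- is sound and carries over verbatim once the weights are corrected in this way.
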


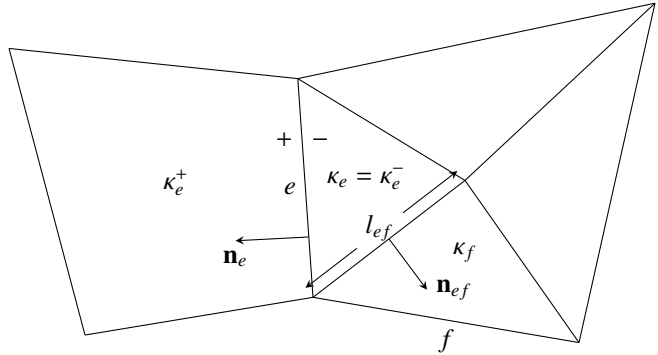
\begin{figure}[ht]
\begin{center}
\begin{tikzpicture}[scale=1.0]
\draw (1.2,1.625) node {$\kappa_e^+$};
\draw (3.7,1.7)   node {$\kappa_e=\kappa_e^-$};
\draw (5,0.7)     node {$\kappa_f$};
\draw (2.9,1.55)  node[left]  {$e$};
\draw (4.75,-0.2) node[below] {$f$};
\draw (2.86,2.20) node[left]  {$+$};
\draw (2.86,2.20) node[right] {$-$};
\draw [>=stealth,<->] (2.9,0.225) -- (4.9,1.775);
\draw (3.875,1.025) node[fill=white] {$l_{ef}$};
\draw [>=stealth,->] (2.94,0.90) -- (1.98,0.85) ;
\draw (1.98,0.85) node[below] {${\bf n}_e$};
\draw [>=stealth,->] (4,0.875) -- (4.5,0.2) ;
\draw (4.5,0.2) node[right] {${\bf n}_{ef}$};
\draw [>=stealth,-] (0.,-0.4) -- (3.0,0.1) ;
\draw [>=stealth,-] (3.,0.1) -- (2.8,3.) ;
\draw [>=stealth,-] (2.8,3.) -- (-1.,3.4) ;
\draw [>=stealth,-] (-1.,3.4) -- (0.,-0.4) ;
\draw [>=stealth,-] (3.,0.1) -- (6.5,-0.5) ;
\draw [>=stealth,-] (6.5,-0.5) -- (7.5,4.) ;
\draw [>=stealth,-] (7.5,4.) -- (2.8,3.) ;
\draw [>=stealth,-] (6.5,-0.5) -- (5.,1.65) ;
\draw [>=stealth,-] (3.0,0.1) -- (5.,1.65) ;
\draw [>=stealth,-] (2.8,3.) -- (5.,1.65) ;
\draw [>=stealth,-] (7.5,4.) -- (5.,1.65) ;
\end{tikzpicture}
\caption{Quadrilateral element $\kappa$ (right) divided into sub-triangles $\kappa=\cup_{e\in\partial\kappa}\kappa_e$ and neighboring element $\kappa_e^+$ sharing the edge $e\in\partial\kappa$. Notations: ${\bf n}_e$ is the unit outward normal vector to $\kappa$ on $e$; ${\bf n}_{ef}$ is the unit normal vector to $\ol{\kappa_e}\cap\ol{\kappa_f}$ oriented from $\kappa_e$ to $\kappa_f$; $l_{ef}=|\ol{\kappa_e}\cap\ol{\kappa_f}|$; and the exponents $^\pm$ correspond to outside and inside traces on $e$ (${\bf n}_{ef}$ and $l_{ef}$ will be used in \cref{sec:proof_positive-2D-FVO2-scheme}).}
\label{fig:quad-with-subtriangles}
\end{center}
\end{figure}

%
%
\section{Entropy stable and robust relaxation-based numerical flux}\label{sec:relax_flux}

We here derive an ES and robust numerical flux for \cref{eq:HRM_PDEs-a} which is given in \cref{sec:bouchut_flux}. \blue{To do so, we will need to combine two successive relaxation approximation steps}. In \cref{sec:nrj_relax_sys,sec:nrj_relax_sys_entropy}, we apply the energy relaxation approximation \cite{coquel_perthame_98} to \cref{eq:HRM_PDEs-a} by following the work in \cite{renac_etal_ES_noneq_21}. The results from \cite{renac_etal_ES_noneq_21} do not apply directly \blue{for two main reasons: (i) the systems differ by their closure law for the pressure and the definition of the mixture, (ii) the entropy for our relaxation system is here not strictly convex as will be shown in \cref{th:var-principle}. The latter reason makes difficult to derive an ES scheme for the energy relaxation system as required by the approach in \cite{coquel_perthame_98,renac_etal_ES_noneq_21}. As a consequence, we cannot apply directly the general framework from \cite{renac_etal_ES_noneq_21} to derive three-point schemes for our model.} We overcome this difficulty in \cref{sec:flux_h_from_H} by introducing a pressure-based relaxation approximation containing only LD fields \cite{bouchut_04} together with a minimization principle on the entropy (see \cref{sec:appendix_relax_sys_for_nrj_relax} for details on this derivation together with a description of the numerical flux). \blue{In the following, we recall the main steps and results of the energy relaxation approximation that will allow to derive the numerical flux for \cref{eq:HRM_PDEs-a}.}

%
\subsection{Energy relaxation system}\label{sec:nrj_relax_sys}

Following the energy relaxation method introduced in \cite{coquel_perthame_98,renac_etal_ES_noneq_21}, we consider the system
\begin{equation}\label{eq:relax-nrj-HRM-sys}
 \partial_t{\bf w}^\epsilon + \nabla\cdot{\bf g}({\bf w}^\epsilon) =-\frac{1}{\epsilon}\big({\bf w}^\epsilon-{\cal M}({\bf w}^\epsilon)\big),
\end{equation}

\noindent with $\epsilon>0$ the relaxation time scale, and also the homogeneous form
\begin{equation}\label{eq:relax-nrj-HRM-sys-homogeneous}
 \partial_t{\bf w} + \nabla\cdot{\bf g}({\bf w}) = 0,
\end{equation}

\noindent where
\begin{equation}\label{eq:relax-nrj-HRM-sys-details}
 {\bf w} = \begin{pmatrix}\rho {\bf Y} \\ \rho \\ \rho {\bf v} \\ \rho E_r \\ \rho e_s \end{pmatrix}, \quad
{\bf g}({\bf w}) = \begin{pmatrix} \rho {\bf Y}{\bf v}^\top \\ \rho {\bf v}^\top \\ \rho {\bf v}{\bf v}^\top+\mathrm{p}_r(\rho,e_r){\bf I} \\ \big(\rho E_r+\mathrm{p}_r(\rho,e_r)\big){\bf v}^\top \\ \rho e_s {\bf v}^\top \end{pmatrix}, \quad
 {\bf w}-{\cal M}({\bf w}) = \begin{pmatrix} 0 \\ 0 \\ 0 \\ 0 \\ -\rho\big(e_s-F({\bf Y},e_r)\big) \\ \rho\big(e_s-F({\bf Y},e_r)\big) \end{pmatrix},
\end{equation}

\noindent $F$ is defined in \cref{eq:maxwellian-equ-hrm}, $E_r$ denotes the total specific energy, $e_r=E_r-\tfrac{{\bf v}\cdot{\bf v}}{2}$ the internal specific energy, and
\begin{equation}\label{eq:EOS-relax}
 \mathrm{p}_r(\rho,e_r) = (\gamma-1)\rho e_r, 
\end{equation}

\noindent where $\gamma$ is defined by
\begin{equation}\label{eq:gamma-max}
 \gamma>\gamma_{max}, \quad \gamma_{max} \overset{\cref{eq:gamma-bound}}{=} \max_{0\leq Y_{1\leq i\leq n_c}\leq1}\gamma({\bf Y}) > 1,
\end{equation}

\noindent and constitutes the subcharacteristic condition for \cref{eq:relax-nrj-HRM-sys} to relax to an equilibrium as $\epsilon\downarrow0$ \cite{coquel_perthame_98}. The set of states for \cref{eq:relax-nrj-HRM-sys} is 
\begin{equation}\label{eq:relax-set-of-states}
 \Omega^r=\big\{{\bf w}\in\mathbb{R}^{d+n_c+2}:\; 0\leq Y_{1\leq i\leq n_c}\leq 1, \rho>0, e_r>0, e_s>0\big\}. 
\end{equation}

Let introduce the operators

\begin{subequations}\label{eq:def_L_P}
\begin{align}
 {\cal L}:\Omega^r\ni{\bf w} &\mapsto {\cal L}{\bf w} = \big(\rho {\bf Y}^\top, \rho, \rho{\bf v}^\top, \rho E_r+\rho e_s\big)^\top \;\in\; \Omega^a,\\
 {\cal P}:\Omega^a\ni{\bf u} &\mapsto {\cal P}({\bf u}) = \Big(\rho {\bf Y}^\top, \rho, \rho{\bf v}^\top, \rho E+\tfrac{\p}{\gamma-1}-\tfrac{\p}{\gamma({\bf Y})-1},\tfrac{\gamma-\gamma({\bf Y})}{\gamma({\bf Y})-1}\tfrac{\p}{\gamma-1}\Big)^\top \;\in\; \Omega^r,
\end{align}
\end{subequations}

\noindent with $\mathrm{p}=\mathrm{p}({\bf Y},\rho,e)$ defined from \cref{eq:mixture_eos} and $\tfrac{\p}{\gamma-1}$ evaluates $\rho e_r$ with equilibrium data. Then, in the limit ${\bf w}=\lim_{\epsilon\downarrow0}{\bf w}^\epsilon$, one formally recovers \cref{eq:HRM_PDEs-a} with
\begin{equation}\label{eq:maxwellian-equ}
 {\bf w} = {\cal M}({\bf w}), \quad {\bf u}={\cal L}{\bf w}, \quad {\bf f}({\bf u}) = {\cal L}{\bf g}({\cal P}({\bf u})),
\end{equation}

\noindent which corresponds to
\begin{equation}\label{eq:maxwellian-equ-hrm}
 E = E_r + e_s, \quad e = e_r + e_s, \quad e_s = F({\bf Y},e_r):=\frac{\gamma-\gamma({\bf Y})}{\gamma({\bf Y})-1}e_r,
\end{equation}

\noindent where $F$ is defined from the consistency relation on the pressure and \cref{eq:mixture_eos}: $\mathrm{p}\big({\bf Y},\rho,e_r+F({\bf Y},e_r)\big) = \mathrm{p}_r(\rho,e_r)$.

\subsection{Entropy for the energy relaxation system}\label{sec:nrj_relax_sys_entropy}

We now define an entropy $\rho\zeta({\bf w})$ for \cref{eq:relax-nrj-HRM-sys} \blue{which will be helpful in deriving an ES scheme for \cref{eq:HRM_PDEs}. The method in \cite{coquel_perthame_98,renac_etal_ES_noneq_21} requires a minimization principle and strict convexity of $\rho\zeta({\bf w})$. We show that only the former property is satisfied here and we will overcome the difficulty associated to the latter in \cref{sec:flux_h_from_H}}. To analyze the entropy, it is convenient to consider the entropy as a function of $\zeta({\bf Y},\tau,e_r,e_s)$ \cite[Ch. 2]{godlewski-raviart} with $\tau=\tfrac{1}{\rho}$ the covolume of the mixture.

Following \cite{renac_etal_ES_noneq_21}, we introduce the functions
\begin{subequations}\label{eq:functions-zeta-R-E}
\begin{align}
 \zeta({\bf Y},\tau,e_r,e_s) &= -\mathrm{s}\big({\bf Y},{\cal T}({\bf Y},\tau,e_r,e_s),{\cal E}({\bf Y},e_s)+e_s\big), \label{eq:functions-zeta-R-E-a} \\ 
{\cal E}({\bf Y},e_s) &= \tfrac{\gamma({\bf Y})-1}{\gamma-\gamma({\bf Y})}e_s, \quad
 {\cal T}({\bf Y},\tau,e_r,e_s) =  \tau\big(\tfrac{\gamma-\gamma({\bf Y})}{\gamma({\bf Y})-1}\tfrac{e_r}{e_s}\big)^{\frac{1}{\gamma-1}} \label{eq:functions-zeta-R-E-b}
\end{align}
\end{subequations}

\noindent where $\mathrm{s}$ is the mixture entropy \cref{eq:mixture_entropy} for \cref{eq:HRM_PDEs-a}, while the function ${\cal E}$ solves $e_s=F({\bf Y},e_r)$ for $e_r$ with $F$ defined in \cref{eq:maxwellian-equ-hrm}. Using \cref{eq:mixture_entropy-a,eq:maxwellian-equ-hrm,eq:functions-zeta-R-E}, we easily obtain

\begin{subequations}\label{eq:zeta-delta-zeta}
\begin{align}
 \zeta({\bf Y},\tau,e_r,e_s) &= -r({\bf Y})\ln\Big(\tau\Big(\tfrac{\gamma-\gamma({\bf Y})}{\gamma({\bf Y})-1}\tfrac{e_r}{e_s}\Big)^{\tfrac{1}{\gamma-1}}\Big) -C_v({\bf Y})\ln\Big(\tfrac{\gamma({\bf Y})-1}{\gamma-\gamma({\bf Y})}e_s+e_s\Big) - K({\bf Y}) \nonumber\\
 &= \blue{ -r({\bf Y})\ln\tau - \tfrac{r({\bf Y})}{\gamma-1}\ln\Big(\tfrac{\gamma-\gamma({\bf Y})}{\gamma({\bf Y})-1}\tfrac{e_r}{e_s}\Big) -C_v({\bf Y})\ln\Big(\tfrac{\gamma-1}{\gamma-\gamma({\bf Y})}e_s\Big) - K({\bf Y}) }\nonumber\\
 &= -r({\bf Y})\ln\tau - \tfrac{r({\bf Y})}{\gamma-1}\ln\Big(\tfrac{\gamma-\gamma({\bf Y})}{\gamma({\bf Y})-1}\tfrac{e_r}{e_s}\Big) - C_v({\bf Y})\ln(e_r+e_s) -C_v({\bf Y})\ln\Big(\tfrac{\gamma-1}{\gamma-\gamma({\bf Y})}\tfrac{e_s}{e_r+e_s}\Big) - K({\bf Y}) \label{eq:zeta-delta-zeta-a}\\
  &= -\mathrm{s}({\bf Y},\tau,e_r+e_s) + \varsigma({\bf Y},e_r,e_s), \label{eq:zeta-delta-zeta-b} \\
 \varsigma({\bf Y},e_r,e_s) &= C_v({\bf Y})\ln\Big(\big(\tfrac{\gamma({\bf Y})-1}{\gamma-\gamma({\bf Y})}\tfrac{e_s}{e_r}\big)^{\frac{\gamma({\bf Y})-1}{\gamma-1}}\Big) + C_v({\bf Y})\ln\big(\tfrac{\gamma-\gamma({\bf Y})}{\gamma-1}\tfrac{e_r+e_s}{e_s}\big). \label{eq:zeta-delta-zeta-c}
\end{align}
\end{subequations}


We now prove the following minimization principle which will guaranty that the entropy $\rho\zeta$ for \cref{eq:relax-nrj-HRM-sys} decreases to a unique global minimum which is solution to the multicomponent Euler system \cref{eq:HRM_PDEs-a}.

\begin{lemma}\label{th:var-principle}
 Under the assumption \cref{eq:gamma-max}, the function $\rho\zeta$ defined by \cref{eq:functions-zeta-R-E} is a (non strictly) convex entropy for \cref{eq:relax-nrj-HRM-sys} that satisfies the following minimization principle:
\begin{equation}\label{eq:var-principle}
 -\mathrm{s}({\bf Y},\tau,e) = \min_{e_r+e_s=e}\{\zeta({\bf Y},\tau,e_r,e_s):\; e_r>0, e_s>0\},
\end{equation}

\noindent and the minimum is reached at a unique global equilibrium which is solution to the multicomponent Euler system \cref{eq:HRM_PDEs-a}.
\end{lemma}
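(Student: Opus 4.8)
The statement bundles three assertions: the minimization principle \cref{eq:var-principle} with a unique minimizer, the identification of that minimizer with the energy-relaxation equilibrium solving \cref{eq:HRM_PDEs-a}, and the (non-strict) convexity of $\rho\zeta$ as an entropy for \cref{eq:relax-nrj-HRM-sys}. The whole argument rests on the additive splitting \cref{eq:zeta-delta-zeta-b}, $\zeta=-\mathrm{s}({\bf Y},\tau,e_r+e_s)+\varsigma({\bf Y},e_r,e_s)$, which confines the dependence on the energy partition $(e_r,e_s)$ to the single scalar term $\varsigma$ of \cref{eq:zeta-delta-zeta-c}. I would dispatch the minimization and uniqueness first, as they are elementary, and keep convexity for last since it is the genuine difficulty.

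For the minimization principle I would fix ${\bf Y}$, $\tau$ and the total internal energy $e=e_r+e_s$, so that the first term in \cref{eq:zeta-delta-zeta-b} equals the constant $-\mathrm{s}({\bf Y},\tau,e)$ and minimizing $\zeta$ reduces to minimizing $\varsigma$ alone. Parametrizing by $x=e_s\in(0,e)$ with $e_r=e-x$ and setting $a:=\tfrac{\gamma({\bf Y})-1}{\gamma-1}\in(0,1)$, which is where the subcharacteristic bound \cref{eq:gamma-max} is used, the expression \cref{eq:zeta-delta-zeta-c} collapses up to an additive constant to $C_v({\bf Y})\big[(a-1)\ln x-a\ln(e-x)\big]$. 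From this one reads $\varsigma''>0$ on $(0,e)$ and $\varsigma\to+\infty$ at both endpoints, so $\varsigma$ has a unique interior minimizer, obtained from $\varsigma'=0$ as $x^\star=(1-a)e$, i.e. $e_s^\star=\tfrac{\gamma-\gamma({\bf Y})}{\gamma-1}e$ and $e_r^\star=\tfrac{\gamma({\bf Y})-1}{\gamma-1}e$. A direct substitution shows $e_s^\star=F({\bf Y},e_r^\star)$, the Maxwellian equilibrium \cref{eq:maxwellian-equ-hrm}, and that both logarithms in \cref{eq:zeta-delta-zeta-c} vanish there, hence $\varsigma(x^\star)=0$ and the minimum value is $-\mathrm{s}({\bf Y},\tau,e)$. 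This proves \cref{eq:var-principle}, the uniqueness of the minimizer and that it solves \cref{eq:HRM_PDEs-a}. As a by-product, $\varsigma\geq0$ with equality only at equilibrium shows that the relaxation source of \cref{eq:relax-nrj-HRM-sys}, which drives $e_s$ towards $F({\bf Y},e_r)$ at fixed ${\bf Y},\tau,e$, dissipates $\rho\zeta$.

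For the convexity I would start from the regrouped form of \cref{eq:zeta-delta-zeta-a}, namely $\zeta=-r({\bf Y})\ln\tau-\tfrac{r({\bf Y})}{\gamma-1}\ln e_r-\tfrac{(\gamma-\gamma({\bf Y}))C_v({\bf Y})}{\gamma-1}\ln e_s+g({\bf Y})$ with $g$ depending on the composition only. Multiplying by $\rho$ and exploiting that $\rho r({\bf Y})=\sum_i(\rho Y_i)r_i$ and $\rho(\gamma-\gamma({\bf Y}))C_v({\bf Y})=\sum_i(\rho Y_i)(\gamma C_{v_i}-C_{p_i})$ are \emph{linear} and, by \cref{eq:gamma-max}, positive in the conserved mass variables, I would split $\rho\zeta=P+P_s+R$ where $P=-\tfrac{\rho r({\bf Y})}{\gamma-1}\ln\tfrac{e_r}{\rho^{\gamma-1}}$ is the entropy of a polytropic mixture of exponent $\gamma$ in $(\rho{\bf Y},\rho,\rho{\bf v},\rho E_r)$, $P_s=-\tfrac{\rho(\gamma-\gamma({\bf Y}))C_v({\bf Y})}{\gamma-1}\ln\tfrac{e_s}{\rho^{\gamma-1}}$ is the analogous polytropic entropy for the advected energy $e_s$, and $R$ gathers the remaining terms, which depend only on $(\rho{\bf Y},\rho)$. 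The pieces $P$ and $P_s$ are convex by the classical convexity of the polytropic-gas entropy, extended to a mixture sharing the common exponent $\gamma$, all their coefficients being positive thanks to \cref{eq:gamma-max}.

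The crux, and the step I expect to be hardest, is to assemble these pieces into the full Hessian of $\rho\zeta$ and to prove it positive semi-definite. The obstruction is that $P$ and $P_s$ share the same density and composition (the $e_s$-gas carries no independent mass), so their second-order contributions in the $(\rho{\bf Y},\rho)$ block, combined with the generally indefinite contribution of $R$, do not simply add; I would compute this block explicitly and show the positive curvature dominates while exhibiting the one direction along which it degenerates. That single null direction is precisely the non-strict convexity announced in the statement, and the reason the general framework of \cite{coquel_perthame_98,renac_etal_ES_noneq_21}, which presupposes strict convexity, cannot be invoked. Finally, since the homogeneous system \cref{eq:relax-nrj-HRM-sys-homogeneous} advects ${\bf Y}$ and $e_s$ and reduces to polytropic Euler in its core block, an entropy flux of the advective form $\rho\zeta{\bf v}$, analogous to \cref{eq:HRM_entropy_pair}, is compatible with $\rho\zeta$; together with the source dissipation established above, this completes the proof that $\rho\zeta$ is an entropy for \cref{eq:relax-nrj-HRM-sys}.
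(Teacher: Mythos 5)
Your treatment of the minimization principle is correct and is essentially the paper's own argument in a different parametrization: the paper writes $\varsigma = C_v({\bf Y})\ln f(\alpha,x)$ with $x=e_s/e_r$ and $\alpha=\tfrac{\gamma({\bf Y})-1}{\gamma-1}$ and runs a monotonicity analysis of $f$, whereas you fix $e$ and minimize in $x=e_s$ on $(0,e)$; both are elementary calculus, both identify the unique minimizer with the Maxwellian \cref{eq:maxwellian-equ-hrm}, and both verify $\varsigma=0$ there, so that part of \cref{eq:var-principle} stands.

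The convexity claim, however, is a genuine gap: you never prove it. Your plan splits $\rho\zeta=P+P_s+R$ in conserved variables, notes that $P$ and $P_s$ are (plausibly) convex, concedes that $R$ is indefinite --- it necessarily contains a term $-\rho\big(\gamma-\gamma({\bf Y})\big)C_v({\bf Y})\ln\rho$, because $P$ and $P_s$ together overcount the $\rho r({\bf Y})\ln\rho$ contribution --- and then defers precisely the decisive step (``I would compute this block explicitly and show the positive curvature dominates''). That dominance \emph{is} the lemma: asserting it is not proving it, and since the indefinite remainder couples the $(\rho{\bf Y},\rho)$ block to the rest, no soft additivity argument rescues the splitting. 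The paper does the work concretely and in a more tractable coordinate system: it invokes the classical reduction \cite[Ch.~2]{godlewski-raviart} that convexity of $\rho\zeta({\bf w})$ follows from convexity of $\zeta$ as a function of the \emph{specific} variables $({\bf Y},\tau,e_r,e_s)$, computes the full Hessian \cref{eq:hessian_zeta} starting from \cref{eq:def_zeta_tau}, and then, using the identity \cref{eq:dr_dCv_dgam}, completes squares twice to write ${\bf x}^\top\mathbfcal{H}_\zeta{\bf x}$ as a sum of three squares with denominators $r({\bf Y})$, $(\gamma-1)r({\bf Y})$, and $\big(\gamma-\gamma({\bf Y})\big)(\gamma-1)C_v({\bf Y})$, all positive under \cref{eq:gamma-max}. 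That computation also contradicts your guess of a single degenerate direction: three vanishing linear forms in the $n_c+2$ specific variables leave a null space of dimension $n_c-1$, so the degeneracy is one-dimensional only when $n_c=2$. To complete your write-up you would either have to carry out your block computation in full, or switch to the paper's specific-variable Hessian argument.
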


\begin{proof}
From \cref{eq:zeta-delta-zeta-a} and the definition of $K({\bf Y})$ in \cref{eq:mixture_entropy} we rewrite $\zeta$ as
\begin{align}
  \zeta({\bf Y},\tau,e_r,e_s) &= -r({\bf Y})\ln\tau - \tfrac{r({\bf Y})}{\gamma-1}\ln e_r - \tfrac{(\gamma-\gamma({\bf Y}))C_v({\bf Y})}{\gamma-1}\ln e_s  \nonumber\\
	&+ C_v({\bf Y})\ln\big((\gamma-\gamma({\bf Y}))C_v({\bf Y})\big) + r({\bf Y})\ln r({\bf Y}) + \tfrac{r({\bf Y})}{\gamma-1}\ln\tfrac{\gamma({\bf Y})-1}{\gamma-\gamma({\bf Y})} + l({\bf Y}), \label{eq:def_zeta_tau}
\end{align}

\noindent with $l({\bf Y})=\sum_{i=1}^{n_c}Y_i(C_{v_i}\ln C_{v_i}+r_i\ln r_i + \mathrm{s}_i^\infty)$ linear in ${\bf Y}$. To prove that $\rho\zeta({\bf w})$ is convex it is sufficient to prove that $\zeta({\bf Y},\tau,e_r,e_s)$ is convex \cite[Ch. 2]{godlewski-raviart}. Introducing the short notations $\partial_kr\equiv\partial_{Y_k}r({\bf Y})$, $\partial_kC_v\equiv\partial_{Y_k}C_v({\bf Y})$, and $\partial_k\gamma\equiv\partial_{Y_k}\gamma({\bf Y})$, the Hessian of $\zeta$ reads
\begin{equation}\label{eq:hessian_zeta}
  \mathbfcal{H}_\zeta({\bf Y},\tau,e_r,e_s) = 
  \begin{pmatrix} \big(\partial_{kl}^2\zeta\big)_{1\leq k,l < n_c} & \big(\tfrac{-\partial_kr}{\tau}\big)_{1\leq k < n_c} & \big(\tfrac{-\partial_kr}{(\gamma-1)e_r}\big)_{1\leq k < n_c} & \big(\tfrac{\partial_kr-(\gamma-1)\partial_kC_v}{(\gamma-1)e_s}\big)_{1\leq k < n_c} \\
    \big(\tfrac{-\partial_lr}{\tau}\big)_{1\leq l < n_c} & \tfrac{r({\bf Y})}{\tau^2} & 0 & 0 \\
    \big(\tfrac{-\partial_lr}{(\gamma-1)e_r}\big)_{1\leq l < n_c} & 0 & \tfrac{r({\bf Y})}{(\gamma-1)e_r^2} & 0 \\
    \big(\tfrac{\partial_lr-(\gamma-1)\partial_lC_v}{(\gamma-1)e_s}\big)_{1\leq l < n_c} & 0 & 0 & \tfrac{\gamma-\gamma({\bf Y})}{\gamma-1}\tfrac{C_v({\bf Y})}{e_s^2}
 \end{pmatrix}, 
\end{equation}

\noindent where
\begin{equation}\label{eq:dr_dCv_dgam}
  \partial_kr \overset{\cref{eq:mixture_r_Cv_Cp}}{\underset{\cref{eq:mixture_gamma}}{=}} C_v({\bf Y})\partial_k\gamma+(\gamma({\bf Y})-1)\partial_kC_v,
\end{equation}
\noindent and
\begin{align*}
\partial_{k}\zeta &= -\partial_kr\ln\tau - \frac{\partial_kr}{\gamma-1}\ln e_r -\frac{\gamma\partial_kC_v-\partial_kC_p}{\gamma-1}\ln e_s + \partial_kC_v\ln\big((\gamma-\gamma({\bf Y}))C_v({\bf Y})\big) + \frac{\gamma\partial_kC_v-\partial_kC_p}{\gamma-\gamma({\bf Y})} \nonumber\\
 &+ \partial_kr\big(1+\ln r({\bf Y})\big) + \frac{\partial_kr}{\gamma-1}\ln\frac{\gamma({\bf Y})-1}{\gamma-\gamma({\bf Y})} + \frac{r({\bf Y})}{\gamma-1}\Big(\frac{\partial_k\gamma}{\gamma({\bf Y})-1}+\frac{\partial_k\gamma}{\gamma-\gamma({\bf Y})}\Big) + \partial_kl({\bf Y}), \quad 1\leq k < n_c,
\end{align*}
\noindent gives
\begin{equation}\label{eq:d2ZetadYY}
\partial_{kl}^2\zeta = \frac{\partial_kr\partial_lr}{r({\bf Y})} + \frac{\partial_kC_v\partial_lC_v}{C_v({\bf Y})} + \frac{C_v({\bf Y})\partial_k\gamma\partial_l\gamma}{\big(\gamma-\gamma({\bf Y})\big)\big(\gamma({\bf Y})-1\big)}, \quad 1\leq k,l< n_c.
\end{equation}

We now prove that $\mathbfcal{H}_\zeta$ is symmetric positive semi-definite. Let ${\bf x}=(x_{1\leq i<n_c},x_\tau,x_r,x_s)^\top$ in $\mathbb{R}^{n_c+2}$ and use the notation $\sum\equiv\sum_{k=1}^{n_c-1}$, we get
\begin{align}
  \sum_{k,l=1}^{n_c-1} x_k\partial_{kl}^2\zeta x_l &\overset{\cref{eq:d2ZetadYY}}{=} \frac{(\sum x_k\partial_kr)^2}{r({\bf Y})} + \frac{(\sum x_k\partial_kC_v)^2}{C_v({\bf Y})} + \frac{C_v({\bf Y})(\sum x_k\partial_k\gamma)^2}{\big(\gamma-\gamma({\bf Y})\big)\big(\gamma({\bf Y})-1\big)} \nonumber \\
  &\overset{\cref{eq:dr_dCv_dgam}}{=}  \frac{\gamma(\sum x_k\partial_kr)^2}{(\gamma-1)r({\bf Y})} - \frac{\big(\sum x_kC_v({\bf Y})\partial_k\gamma+x_k(\gamma({\bf Y})-1)\partial_kC_v\big)^2}{(\gamma-1)r({\bf Y})} + \frac{(\sum x_k\partial_kC_v)^2}{C_v({\bf Y})} + \frac{C_v({\bf Y})(\sum x_k\partial_k\gamma)^2}{\big(\gamma-\gamma({\bf Y})\big)\big(\gamma({\bf Y})-1\big)} \nonumber \\
  &= \frac{\gamma}{\gamma-1}\frac{(\sum x_k\partial_kr)^2}{r({\bf Y})} + \frac{(\sum x_k(\gamma-\gamma({\bf Y}))\partial_kC_v-x_kC_v({\bf Y})\partial_k\gamma)^2}{\big(\gamma-\gamma({\bf Y})\big)(\gamma-1)C_v({\bf Y})} \nonumber \\
  &\overset{\cref{eq:dr_dCv_dgam}}{=} \frac{\gamma}{\gamma-1}\frac{(\sum x_k\partial_kr)^2}{r({\bf Y})} + \frac{(\sum x_k(\gamma-1)\partial_kC_v-x_k\partial_kr)^2}{\big(\gamma-\gamma({\bf Y})\big)(\gamma-1)C_v({\bf Y})}, \label{eq:SPD_d2ZetadYY}
\end{align}

\noindent so we obtain
\begin{align*}
  {\bf x}^\top\mathbfcal{H}_\zeta{\bf x} &\overset{\cref{eq:hessian_zeta}}{\underset{\cref{eq:SPD_d2ZetadYY}}{=}} \Big(1+\frac{1}{\gamma-1}\Big)\frac{(\sum x_k\partial_kr)^2}{r({\bf Y})} + \frac{(\sum x_k(\gamma-1)\partial_kC_v-x_k\partial_kr)^2}{\big(\gamma-\gamma({\bf Y})\big)(\gamma-1)C_v({\bf Y})} + r({\bf Y})\frac{x_\tau^2}{\tau^2} + \frac{r({\bf Y})}{\gamma-1}\frac{x_r^2}{e_r^2} \\
  & + \frac{\big(\gamma-\gamma({\bf Y})\big)C_v({\bf Y})}{\gamma-1}\frac{x_s^2}{e_s^2} -2\sum x_k\Big(\partial_kr\frac{x_\tau}{\tau} + \frac{\partial_kr}{\gamma-1}\frac{x_r}{e_r} + \frac{(\gamma-1)\partial_kC_v-\partial_kr}{\gamma-1}\frac{x_s}{e_s}\Big) \\
  &= \frac{\big(\sum x_k\partial_kr-r({\bf Y})\tfrac{x_\tau}{\tau}\big)^2}{r({\bf Y})} + \frac{\big(\sum x_k\partial_kr-r({\bf Y})\tfrac{x_r}{e_r}\big)^2}{(\gamma-1)r({\bf Y})} + \frac{\big(\sum x_k((\gamma-1)\partial_kC_v-\partial_kr)-(\gamma-\gamma({\bf Y}))C_v({\bf Y})\tfrac{x_s}{e_s}\big)^2}{(\gamma-\gamma({\bf Y}))(\gamma-1)C_v({\bf Y})}
\end{align*}

\noindent which is non-negative.


Finally, according to \cref{eq:var-principle}, we need to prove that, for all positive $e_r$ and $e_s$ and fixed ${\bf Y}$ and $\tau$, $\varsigma$, defined in \cref{eq:zeta-delta-zeta-c}, is non-negative and vanishes at equilibrium \cref{eq:maxwellian-equ-hrm} that constitutes a global minimum: $-\s\leq\zeta$. Let us rewrite $\varsigma$ as $C_v({\bf Y})\ln\big(f(\alpha,x)\big)$ with $f(\alpha,x)=\tfrac{(1-\alpha)(1+x)}{x}\big(\tfrac{\alpha x}{1-\alpha}\big)^\alpha$, $x=\tfrac{e_s}{e_r}>0$, and $\alpha=\tfrac{\gamma({\bf Y})-1}{\gamma-1}$ in $(0,1)$ from \cref{eq:gamma-max}. We have $\partial_xf(\alpha,x)=\tfrac{1-\alpha}{x^2}(\alpha x+\alpha-1)$, thus $\partial_xf(\alpha,x)<0$ for $0<x<x_{min}:=\tfrac{1-\alpha}{\alpha}$, $\partial_xf(\alpha,x)>0$ for $x>x_{min}$, and $\partial_xf(\alpha,x_{min})=0$. Since $f(\alpha,x_{min})=1$, $\varsigma$ vanishes at the global minimum $\alpha x_{min}=1-\alpha\Leftrightarrow \tfrac{\gamma({\bf Y})-1}{\gamma-1}\tfrac{e_s}{e_r}=1-\tfrac{\gamma({\bf Y})-1}{\gamma-1}$ which indeed corresponds to the equilibrium \cref{eq:maxwellian-equ-hrm}: $e_s=F({\bf Y},e_r)$. This defines the internal energy, $e=e_r+e_s=\tfrac{\gamma-1}{\gamma({\bf Y})-1}e_r$, in \cref{eq:HRM_PDEs} in a unique way so ${\bf u}$ is uniquely defined and the global minimum is unique in $\Omega^a$.\qed
\end{proof}


%
\subsection{Discrete energy relaxation}\label{sec:flux_h_from_H}

\subsubsection{\blue{From a scheme for the energy relaxation system}}


\blue{The derivation of a scheme for \cref{eq:HRM_PDEs-a} with the energy relaxation approximation \cite{coquel_perthame_98,renac_etal_ES_noneq_21} uses} a splitting of the hyperbolic and relaxation operators in \cref{eq:relax-nrj-HRM-sys}. In the first step we consider the following three-point scheme for the homogeneous relaxation system \cref{eq:relax-nrj-HRM-sys-homogeneous}:
\begin{equation}\label{eq:3pt-scheme-relax}
 {\bf W}_j^{n+1}-{\bf W}_j^{n} + \tfrac{\Delta t}{\Delta x}\big({\bf H}({\bf W}_{j}^{n},{\bf W}_{j+1}^{n},{\bf n})-{\bf H}({\bf W}_{j-1}^{n},{\bf W}_{j}^{n},{\bf n})\big) = 0, 
\end{equation}

\noindent with ${\bf H}({\bf w},{\bf w},{\bf n})={\bf g}({\bf w})\cdot{\bf n}$ \blue{and we need to prove that this scheme is ES, i.e.}, under some condition on $\Delta t$, we have

\begin{equation}\label{eq:3pt-scheme-equal-relax}
 \rho\zeta({\bf W}_j^{n+1}) - \rho\zeta({\bf W}_j^{n}) + \tfrac{\Delta t}{\Delta x}\big(Z({\bf W}_{j}^{n},{\bf W}_{j+1}^{n},{\bf n})-Z({\bf W}_{j-1}^{n},{\bf W}_{j}^{n},{\bf n})\big) \leq 0, 
\end{equation}

\noindent with $Z({\bf w},{\bf w},{\bf n})=\rho\zeta{\bf v}\cdot{\bf n}$. Since $\rho\zeta({\bf w})$ is not strictly convex from \cref{th:var-principle}, \cref{eq:3pt-scheme-equal-relax} is usually difficult to prove (for instance one cannot neither define ${\bf w}$ and ${\bf g}$ in \cref{eq:relax-nrj-HRM-sys-homogeneous} as functions of the entropy variables as in \blue{\cite{harten_lax_RCM_81,tadmor87,vila_EC_88,Johnson1990OnTC}, nor derive entropy dissipation estimates as in \cite{coquel_lefloch_MUSCL_96}, see also the review in \cite{hll_83}}). We overcome this difficulty by considering a pressure-based relaxation scheme adapted from \cite[Sec.~2.4]{bouchut_04} that satisfies \cref{eq:3pt-scheme-equal-relax}. For the sake of readability, we derive the numerical scheme in \cref{sec:appendix_relax_sys_for_nrj_relax}: the numerical flux in \cref{eq:3pt-scheme-relax} is defined in \cref{eq:relax_flux_for_nrj_sys}, while we derive \cref{eq:3pt-scheme-equal-relax} in \cref{eq:press-relax-entropy-equality,eq:press-relax-entropy-projection}, and define $Z$ in \cref{eq:3pt-scheme-equal-relax-details}.

\subsubsection{\blue{To a scheme for the multicomponent Euler system}}

We here derive the ES, robust, consistent, and Lipschitz continuous numerical flux for \cref{eq:HRM_PDEs-a} from the pressure-based relaxation numerical flux for the relaxation system \cref{eq:relax-nrj-HRM-sys} with similar properties introduced in \cref{sec:flux_h_from_H} and detailed in \cref{sec:appendix_relax_sys_for_nrj_relax}. \blue{From \cite{coquel_perthame_98,renac_etal_ES_noneq_21},} the numerical flux for \cref{eq:HRM_PDEs-a} reads
\begin{equation}\label{eq:flux_h_from_H}
 {\bf h}({\bf u}^-,{\bf u}^+,{\bf n})={\cal L}{\bf H}\big({\cal P}({\bf u}^-),{\cal P}({\bf u}^+),{\bf n}\big),
\end{equation}

\noindent where the operators ${\cal L}$ and ${\cal P}$ are defined in \cref{eq:def_L_P}. The explicit form of \cref{eq:flux_h_from_H} is given in \cref{sec:bouchut_flux}. \blue{The ${\cal L}$ operator in \cref{eq:flux_h_from_H} consists in adding up the $\rho E_r$ and $\rho e_s$ components of ${\bf H}$ to build the numerical flux for the total energy, $\rho E$, while the ${\cal P}$ operators consist in taking data at equilibrium, i.e., $\p_r=\p({\bf Y},\rho,e)$ and $e_s=\tfrac{\gamma-\gamma({\bf Y})}{\gamma-1}e$ from \cref{eq:maxwellian-equ-hrm}. This last operation is equivalent to applying time discrete instantaneous relaxation and constitutes the second step of the splitting of hyperbolic and relaxation operators \cite{coquel_perthame_98}.}

\blue{
Providing that \cref{eq:gamma-max} holds, the numerical scheme \cref{eq:3pt-scheme-a} and the numerical flux \cref{eq:flux_h_from_H} defined in \cref{eq:relax_flux} have the following properties for which points (iii) and (v) are direct consequences of \cref{th:var-principle} and \cref{eq:3pt-scheme-equal-relax} (see \cite[Th.~4.3]{renac_etal_ES_noneq_21} for details):
}
\blue{
\begin{enumerate}[label=(\roman*)]
 \item the flux \cref{eq:flux_h_from_H} is consistent \cref{eq:consistent_flux} by consistency of ${\bf H}$: we have ${\bf h}({\bf u},{\bf u},{\bf n})={\cal L}{\bf H}\big({\cal P}({\bf u}),{\cal P}({\bf u}),{\bf n}\big)={\cal L}{\bf g}({\cal P}({\bf u}))\cdot{\bf n}={\bf f}({\bf u})\cdot{\bf n}$ from \cref{eq:maxwellian-equ};
 \item the flux \cref{eq:flux_h_from_H} is Lipschitz continuous by composition of Lipschitz continuous functions; 
 \item the scheme \cref{eq:3pt-scheme-a} is ES in the sense \cref{eq:3pt-scheme-ineq} for the pair $(\eta,{\bf q})$ in \cref{eq:HRM_entropy_pair} with $Q({\bf u}^-,{\bf u}^+,{\bf n})=Z\big({\cal P}({\bf u}^-),{\cal P}({\bf u}^+),{\bf n}\big)$;
 \item the flux \cref{eq:flux_h_from_H} is ES in the sense \cref{eq:entropy_stable_flux} since \cref{eq:3pt-scheme-ineq} implies \cref{eq:entropy_stable_flux} \cite[Lemma~2.8]{bouchut_04};
  \item the scheme \cref{eq:3pt-scheme-a} is robust: ${\bf U}_{j\in\mathbb{Z}}^{n\geq0}\in\Omega^a$ providing that ${\bf U}_{j\in\mathbb{Z}}^{0}\in\Omega^a$. 
\end{enumerate}
}

\subsection{Two-point numerical flux for \cref{eq:HRM_PDEs-a}}\label{sec:bouchut_flux}

We here give details on the numerical flux \cref{eq:flux_h_from_H} for the multicomponent Euler equations \cref{eq:HRM_PDEs-a}. The flux follows from applying \cref{eq:flux_h_from_H} to the numerical flux \cref{eq:relax_flux_for_nrj_sys,eq:bouchut-relax-ARS-xt} which is shown in \cref{sec:appendix_press_relax_flux} to lead to an ES and robust scheme \cref{eq:3pt-scheme-relax}. We thus obtain an ES and robust scheme \cref{eq:3pt-scheme-a,eq:3pt-scheme-ineq} and an ES flux \cref{eq:entropy_stable_flux} for \cref{eq:HRM_PDEs-a} under the condition
\begin{equation}\label{eq:3-pt-CFL_cond}
 \frac{\Delta t}{\Delta x}\max_{j\in\mathbb{Z}}|\lambda({\bf U}_j^{n})| < \frac{1}{2}, \quad |\lambda({\bf u})| := |{\bf v}\cdot{\bf n}|+\tfrac{a}{\rho},
\end{equation}

\noindent where $a$ denotes the Lagrangian sound speed and will be defined in \cref{eq:wave-estimate}. The numerical flux reads
\begin{equation}\label{eq:relax_flux}
 {\bf h}({\bf u}^-,{\bf u}^+,{\bf n}) = {\bf f}\big(\bcW^{r}(0;{\bf u}^-,{\bf u}^+,{\bf n})\big)\cdot{\bf n},
\end{equation}

\noindent where the Riemann solver $\bcW^r(\cdot;{\bf u}_L,{\bf u}_R,{\bf n})$ is used to approximate the solution to \cref{eq:HRM_PDEs} in the direction ${\bf n}$ with initial data, ${\bf u}_0(x) = {\bf u}_L$ if $x:={\bf x}\cdot{\bf n}<0$ and ${\bf u}_0(x) = {\bf u}_R$  if $x>0$, and reads
\begin{equation}\label{eq:relax-ARS-xt}
 \bcW^{r}(\tfrac{x}{t};{\bf u}_L,{\bf u}_R,{\bf n}) = \left\{ \begin{array}{ll}  {\bf u}_L, & \tfrac{x}{t} < S_L, \\
 {\bf u}_L^\star, & S_L<\tfrac{x}{t}<u^\star,\\
 {\bf u}_R^\star, & u^\star<\tfrac{x}{t}<S_R,\\
 {\bf u}_R, & S_R < \tfrac{x}{t}, \end{array} \right.
\end{equation}

\noindent where ${\bf u}_L^\star=(\rho_L^\star {\bf Y}_L^\top,\rho_L^\star,\rho_L^\star {\bf v}_L^{\star\top},\rho_L^\star E_L^\star)^\top$, ${\bf u}_R^\star=(\rho_R^\star {\bf Y}_R^\top,\rho_R^\star,\rho_R^\star{\bf v}_R^{\star\top},\rho_R^\star E_R^\star)^\top$, and

\begin{subequations}\label{eq:sol-PR-relax}
\begin{align}
  {\bf v}_L^\star &= {\bf v}_L+(u^\star-u_L){\bf n}, \quad {\bf v}_R^\star = {\bf v}_R+(u^\star-u_R){\bf n},  \label{eq:sol-PR-relax-a} \\
  u^\star  &= \frac{a_Lu_L+a_Ru_R+\mathrm{p}_L-\mathrm{p}_R}{a_L+a_R}, \quad \mathrm{p}^\star = \frac{a_R\mathrm{p}_L+a_L\mathrm{p}_R+a_La_R(u_L-u_R)}{a_L+a_R},  \\
  \tau_L^\star &= \tau_L + \frac{u^\star-u_L}{a_L},\quad \tau_R^\star = \tau_R + \frac{u_R-u^\star}{a_R}, \\
  E_L^\star &= E_L - \frac{\mathrm{p}^\star u^\star - \mathrm{p}_Lu_L}{a_L},\quad E_R^\star = E_R - \frac{\mathrm{p}_Ru_R-\mathrm{p}^\star u^\star}{a_R},
\end{align}
\end{subequations}

\noindent where $u_X={\bf v}_X\cdot{\bf n}$ and $\mathrm{p}_X=\mathrm{p}({\bf Y}_X,\rho_X,e_X)$ defined by \cref{eq:mixture_eos} for $X=L,R$; \cref{eq:sol-PR-relax-a} corresponds to a decomposition into normal, $u_X$, and tangential, ${\bf v}_X-u_X{\bf n}$, components of the velocity vector.

The wave speeds in \cref{eq:relax-ARS-xt} are evaluated from $S_L=u_L-a_L/\rho_L$ and $S_R=u_R+a_R/\rho_R$ where the approximate Lagrangian sound speeds \cite{bouchut_04} are defined by
\begin{subequations}\label{eq:wave-estimate}
\begin{align}
 \left\{ \begin{array}{rcl} \tfrac{a_L}{\rho_L} &=& c_\gamma(\rho_L,\mathrm{p}_L) + \tfrac{\gamma+1}{2}\Big(\tfrac{\mathrm{p}_R-\mathrm{p}_L}{\rho_Rc_\gamma(\rho_R,\mathrm{p}_R)}+u_L-u_R\Big)^+ \\
\tfrac{a_R}{\rho_R} &=& c_\gamma(\rho_R,\mathrm{p}_R) + \tfrac{\gamma+1}{2}\Big(\tfrac{\mathrm{p}_L-\mathrm{p}_R}{a_L}+u_L-u_R\Big)^+
\end{array}\right., & \quad \mbox{if } \mathrm{p}_R \geq \mathrm{p}_L, \\
 \left\{ \begin{array}{rcl} \tfrac{a_R}{\rho_R} &=& c_\gamma(\rho_R,\mathrm{p}_R) + \tfrac{\gamma+1}{2}\Big(\tfrac{\mathrm{p}_L-\mathrm{p}_R}{\rho_Lc_\gamma(\rho_L,\mathrm{p}_L)}+u_L-u_R\Big)^+ \\
\tfrac{a_L}{\rho_L} &=& c_\gamma(\rho_L,\mathrm{p}_L) + \tfrac{\gamma+1}{2}\Big(\tfrac{\mathrm{p}_R-\mathrm{p}_L}{a_R}+u_L-u_R\Big)^+
\end{array}\right., & \quad \mbox{else,}  
\end{align}
\end{subequations}

\noindent where $(\cdot)^+=\max(\cdot,0)$ denotes the positive part and $c_\gamma(\rho,\mathrm{p})=\sqrt{\gamma\mathrm{p}/\rho}$ with $\gamma$ defined by \cref{eq:gamma-max}.

%
%
\section{DGSEM formulation}\label{sec:DG_discr}

The DG method consists in defining a semi-discrete weak formulation of problem \cref{eq:HRM_PDEs}. The domain is discretized with a shape-regular mesh $X_h\subset\mathbb{R}^d$ consisting of nonoverlapping and nonempty cells $\kappa$ and we assume that it forms a partition of $\Omega$. By ${\cal E}_h$ we define the set of interfaces in $X_h$. For the sake of clarity, we introduce the DGSEM in two space dimensions $d=2$, the extension (resp. restriction) to $d=3$ (resp. $d=1$) being straightforward. The present analysis is restricted to meshes with straight-sided cells and to infinite domains though bounded domains will be considered in \cref{sec:num_xp}.

\subsection{Numerical solution} We look for approximate solutions in the function space of discontinuous polynomials ${\cal V}_h^p=\{\phi\in L^2(X_h):\;\phi|_{\kappa}\circ{\bf x}_\kappa\in{\cal Q}^p(I^2)\; \forall\kappa\in X_h\}$, where ${\cal Q}^p(I^2)$ denotes the space of functions over the master element $I^2:=\{\bxi=(\xi,\eta):\;-1\leq\xi,\eta\leq1\}$ formed by tensor products of polynomials of degree at most $p$ in each direction. Each physical element $\kappa$ is the image of $I^2$ through the mapping ${\bf x}={\bf x}_\kappa(\bxi)$. Likewise, each edge in ${\cal E}_h$ is the image of $I=[-1,1]$ through the mapping ${\bf x}={\bf x}_e(\xi)$. The approximate solution to \cref{eq:HRM_PDEs} is sought under the form
\begin{equation*}
 {\bf u}_h({\bf x},t)=\sum_{0\leq i,j\leq p}\phi_\kappa^{ij}({\bf x}){\bf U}_\kappa^{ij}(t) \quad \forall{\bf x}\in\kappa,\, \kappa\in X_h,\, \forall t\geq0,
\end{equation*}

\noindent where $({\bf U}_\kappa^{ij})_{0\leq i,j\leq p}$ are the degrees of freedom (DOFs) in the element $\kappa$. The subset $(\phi_\kappa^{ij})_{0\leq i,j\leq p}$ constitutes a basis of ${\cal V}_h^p$ restricted onto the element $\kappa$ and $(p+1)^2$ is its dimension. 

Let $(\ell_k)_{0\leq k\leq p}$ be the Lagrange interpolation polynomials in one space dimension associated to the Gauss-Lobatto nodes over $I$, $\xi_0=-1<\xi_1<\dots<\xi_p=1$: $\ell_k(\xi_l)=\delta_{k,l}$, $0\leq k,l \leq p$, with  $\delta_{k,l}$ the Kronecker symbol. In this work we use tensor products of these polynomials and of Gauss-Lobatto nodes (see \cref{fig:stencil_2D}):
\begin{equation}\label{eq:lag_basis}
 \phi_\kappa^{ij}({\bf x})=\phi_\kappa^{ij}({\bf x}_\kappa(\bxi))=\ell_i(\xi)\ell_j(\eta), \quad 0\leq i,j\leq p.
\end{equation}

\noindent which satisfy the following relation at quadrature points $\bxi_{i'j'}=(\xi_{i'},\xi_{j'})$ in $I^2$:
\begin{equation*}
 \phi_\kappa^{ij}({\bf x}_\kappa^{i'j'})=\delta_{i,i'}\delta_{j,j'}, \quad 0\leq i,j,i',j' \leq p, \quad {\bf x}_\kappa^{i'j'}:={\bf x}_\kappa(\bxi_{i'j'}),
\end{equation*}

\noindent so the DOFs correspond to the point values of the solution: ${\bf U}_\kappa^{ij}(t)={\bf u}_h({\bf x}_\kappa^{ij},t)$.

\begin{figure}[ht]
\begin{center}
\begin{tikzpicture}[scale=0.7]
\draw (3.,0.1)  node {$\bullet$};
\draw (2.8,3)   node {$\bullet$};
\draw (0,0)     node {$\bullet$};
\draw (-1.,3.4) node {$\bullet$};
\draw (1.2,1.625) node {$\kappa=\kappa^-$};
\draw (5.,1.65)   node {$\kappa^+$};
\draw (2.8,3.)    node[above] {$e$};
\draw (2.86,2.20) node[left]  {${\bf u}_h^-$};
\draw (2.86,2.20) node[right] {${\bf u}_h^+$};
\draw (0.83,0.03) node {$\bullet$};
\draw (2.17,0.07) node {$\bullet$};
\draw (2.94,0.90) node {$\bullet$};
\draw (2.86,2.20) node {$\bullet$};
\draw (-0.28,0.94) node {$\bullet$};
\draw (-0.723,2.46) node {$\bullet$};
\draw (1.75,3.11) node {$\bullet$};
\draw (0.05,3.29) node {$\bullet$};
\draw (0.61,0.93) node {$\bullet$};
\draw (0.27,2.39) node {$\bullet$};
\draw (2.05,0.91) node {$\bullet$};
\draw (1.87,2.27) node {$\bullet$};
\draw [>=stealth,->] (2.94,0.90) -- (3.9,0.95) ;
\draw (3.9,0.95) node[below right] {${\bf n}_e$};
\draw [>=stealth,-] (0.,0.) -- (3.0,0.1) ;
\draw [>=stealth,-] (3.,0.1) -- (2.8,3.) ;
\draw [>=stealth,-] (2.8,3.) -- (-1.,3.4) ;
\draw [>=stealth,-] (-1.,3.4) -- (0.,0.) ;
\draw [>=stealth,-] (3.,0.1) -- (6.5,-0.5) ;
\draw [>=stealth,-] (6.5,-0.5) -- (7.5,4.) ;
\draw [>=stealth,-] (7.5,4.) -- (2.8,3.) ;
\end{tikzpicture}
\caption{Inner and outer elements, $\kappa^-$ and $\kappa^+$, for $d=2$; definitions of traces ${\bf u}_h^\pm$ on the interface $e$ and of the unit outward normal vector ${\bf n}_e$; positions of quadrature points in $\kappa^-$ and on $e$ for $p=3$.}
\label{fig:stencil_2D}
\end{center}
\end{figure}
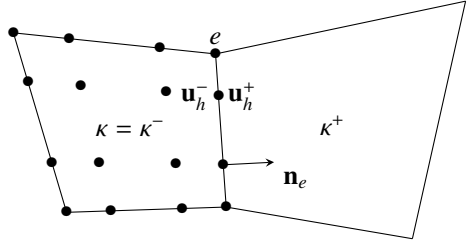

The integrals over elements and faces are approximated by using the Gauss-Lobatto quadrature rules so the quadrature and interpolation nodes are collocated:
\begin{equation}\label{eq:GaussLobatto_quad}
 \int_{\kappa}f({\bf x})dV \simeq \sum_{0\leq i,j\leq p} \omega_i\omega_j J_\kappa^{ij}f({\bf x}_\kappa^{ij}), \quad \int_{e} f({\bf x})dS \simeq \sum_{0\leq k \leq p} \omega_k \tfrac{|e|}{2}f({\bf x}_e^{k}),
\end{equation}

\noindent with $\omega_i>0$, ${\bf x}_\kappa^{ij}$, and ${\bf x}_e^{k}$ the weights and nodes of the quadrature rules, and $J_\kappa^{ij}=\det\big(\partial_\bxi{\bf x}_\kappa(\bxi_{ij})\big)>0$. 

Finally, let define the cell-averaged operator, for instance for the numerical solution:
\begin{equation}\label{eq:cell-average}
 \langle{\bf u}_h\rangle_\kappa(t) :=  \sum_{1\leq i,j\leq p}\omega_i\omega_j\tfrac{J_\kappa^{ij}}{|\kappa|}{\bf U}_\kappa^{ij}(t).
\end{equation}

\noindent where $|\kappa|$ is evaluated through numerical quadrature so the weights satisfy
\begin{equation}\label{eq:2d-weights-cond}
 \sum_{1\leq i,j\leq p}\omega_i\omega_j\frac{J_\kappa^{ij}}{|\kappa|}=1. 
\end{equation}

\subsection{Space discretization}

The semi-discrete form of the DGSEM in space of problem \cref{eq:HRM_PDEs} starts from the following problem: for $t>0$ find ${\bf u}_h$ in $({\cal V}_h^p)^{n_c+d+1}$ such that

\begin{equation}\label{eq:semi-discr_var_form}
  \sum_{\kappa\in X_h}\int_{\kappa} v_h \big(\partial_t{\bf u_h} + \nabla\cdot{\bf f}({\bf u}_h)\big) dV - \sum_{e\in{\cal E}_h}\int_{e}\du v_h\df{\bf h}({\bf u}_h^-,{\bf u}_h^+,{\bf n}_e)-\du v_h{\bf f}({\bf u}_h)\df\cdot{\bf n}_e dS = 0  \quad \forall v_h\in{\cal V}_h^p,
\end{equation}

\noindent where $\du v_h\df=v_h^+-v_h^-$ denotes the jump operator and $v_h^\pm({\bf x })=\lim_{\varepsilon\downarrow0}v_h\big({\bf x }\pm\varepsilon{\bf n}_e({\bf x })\big)$ are the traces of $v_h$ at a point ${\bf x}$ on an interface $e$ in ${\cal E}_h$ and ${\bf n}_e({\bf x })$ denotes the unit normal vector to $e$ at ${\bf x }$ and pointing from $\kappa^-$ to $\kappa^+$. The ES relaxation-based numerical flux \cref{eq:relax_flux} is used to define ${\bf h}(\cdot,\cdot,\cdot)$. Substituting $v_h$ for the Lagrange interpolation polynomials \cref{eq:lag_basis} and using the Gauss-Lobatto quadrature rules \cref{eq:GaussLobatto_quad} to approximate the volume and surface integrals, \cref{eq:semi-discr_var_form} becomes: for all $\kappa\in X_h$, $0\leq i,j\leq p$, and $t>0$, we have
\begin{align*} 
 \omega_i\omega_j J_\kappa^{ij}\frac{d{\bf U}_\kappa^{ij}}{dt} &+ \omega_i\omega_j J_\kappa^{ij} \Big(\sum_{k=0}^p D_{ik}{\bf f}({\bf U}_\kappa^{kj})\nabla\xi(\bxi_{ij})+\sum_{k=0}^p D_{jk}{\bf f}({\bf U}_\kappa^{ik})\nabla\eta(\bxi_{ij})\Big)  \\
 &+ \omega_i\Big(J_{e}^{ip}\delta_{jp}{\bf d}({\bf x}_\kappa^{ip},t)+J_{e}^{i0}\delta_{j0}{\bf d}({\bf x}_\kappa^{i0},t)\Big) + \omega_j\Big(J_{e}^{pj}\delta_{ip}{\bf d}({\bf x}_\kappa^{pj},t)+J_{e}^{0j}\delta_{i0}{\bf d}({\bf x}_\kappa^{0j},t)\Big) = 0, 
\end{align*}

\noindent where $D_{ik} = \ell_k'(\xi_i)$, ${\bf d}({\bf x},t)={\bf h}\big({\bf u}_h^{-}({\bf x},t),{\bf u}_h^{+}({\bf x},t),{\bf n}_e({\bf x})\big)-{\bf f}\big({\bf u}_h^{-}({\bf x},t)\big)\cdot{\bf n}_e({\bf x})$ at ${\bf x}$ on $e$, ${\bf u}_h^\pm({\bf x},t)$ denote the traces of the numerical solution on $e$ (see \cref{fig:stencil_2D}), and $J_{e}^{ip}:=\det(\partial_\xi{\bf x}_e)_{{\bf x}_\kappa^{ip}}=\tfrac{|e|}{2}$ where ${\bf x}_\kappa^{ip}$, $0\leq i\leq p$, uniquely identify $e$. 


As explained in the introduction, the volume integral in the above equation is modified so as to satisfy an entropy balance \cite{fisher_carpenter_13,wintermeyer_etal_17}: the physical fluxes are replaced by EC numerical fluxes \cref{eq:entropy_conserv_flux}, while the metric terms are modified to achieve conservation of the scheme over elements. The semi-discrete ES scheme thus reads
\begin{equation}\label{eq:semi-discr_DGSEM}
 \omega_i\omega_j J_\kappa^{ij}\frac{d{\bf U}_\kappa^{ij}}{dt} + {\bf R}_\kappa^{ij}({\bf u}_h) = 0 \quad \forall\kappa\in X_h,\; 0\leq i,j\leq p,\; t>0,
\end{equation}

\noindent with
\begin{align}
  {\bf R}_\kappa^{ij}({\bf u}_h) &= 2\omega_i\omega_j\Big(\sum_{k=0}^p D_{ik}{\bf h}_{ec}\big({\bf U}_\kappa^{ij},{\bf U}_\kappa^{kj},\{J_\kappa\nabla\xi\}_{(i,k)j}\big) + \sum_{k=0}^p D_{jk}{\bf h}_{ec}\big({\bf U}_\kappa^{ij},{\bf U}_\kappa^{ik},\{J_\kappa\nabla\eta\}_{i(j,k)}\big)\Big) \nonumber \\
  &+ \omega_i\Big(J_{e}^{ip}\delta_{jp}{\bf d}({\bf x}_\kappa^{ip},t)+J_{e}^{i0}\delta_{j0}{\bf d}({\bf x}_\kappa^{i0},t)\Big) + \omega_j\Big(J_{e}^{pj}\delta_{ip}{\bf d}({\bf x}_\kappa^{pj},t)+J_{e}^{0j}\delta_{i0}{\bf d}({\bf x}_\kappa^{0j},t)\Big), \label{eq:semi-discr_DGSEM-res}
\end{align}

\noindent where $\{J_\kappa\nabla\xi\}_{(i,k)j}=\tfrac{1}{2}\big(J_\kappa^{ij}\nabla\xi(\bxi_{ij})+J_\kappa^{kj}\nabla\xi(\bxi_{kj})\big)$, $\{J_\kappa\nabla\eta\}_{i(j,k)}=\tfrac{1}{2}\big(J_\kappa^{ij}\nabla\eta(\bxi_{ij})+J_\kappa^{ik}\nabla\eta(\bxi_{ik})\big)$, and ${\bf h}_{ec}(\cdot,\cdot,\cdot)$ denotes the EC numerical flux \cref{eq:EC_flux}. The DGSEM \cref{eq:semi-discr_DGSEM} from \cite{winters_etal_16,wintermeyer_etal_17} is one of more general conservative elementwise flux differencing schemes satisfying the semi-discrete entropy inequality for the cell-averaged entropy \cite{fisher_carpenter_13}:
\begin{equation}\label{eq:mean_DGSEM}
 |\kappa|\frac{d\langle\eta({\bf u}_h)\rangle_\kappa}{dt}+\sum_{e\in\partial\kappa}\sum_{k=0}^p\tfrac{|e|}{2}Q\big({\bf u}_h^-({\bf x}_e^{k},t),{\bf u}_h^+({\bf x}_e^{k},t),{\bf n}_e\big) \leq 0,
\end{equation}

\noindent where $Q(\cdot,\cdot,\cdot)$ is defined in \cref{eq:3pt-scheme-ineq}. Finally, high-order accuracy of \cref{eq:semi-discr_DGSEM} has been proved in \cite{chen_shu_17,ranocha_18}.

%
%
\section{Fully discrete scheme}\label{sec:scheme_properties}

We now focus on the fully discrete scheme and we first use a one-step first-order explicit time discretization and analyze its properties. High-order time integration will be done by using strong-stability preserving explicit Runge-Kutta methods \cite{spiteri_ruuth02} that keep the properties of the first-order in time scheme under some condition on the time step.

\subsection{Time discretization}

Let $\Delta t^{(n)}=t^{(n+1)}-t^{(n)}>0$, with $t^{(0)}=0$, be the time step, and use the notations ${\bf u}_h^{(n)}(\cdot)={\bf u}_h(\cdot,t^{(n)})$ and ${\bf U}_\kappa^{ij,n}={\bf U}_\kappa^{ij}(t^{(n)})$. The fully discrete DGSEM scheme for \cref{eq:HRM_PDEs} reads
\begin{equation}\label{eq:fully-discr_DGSEM}
 \omega_i\omega_j J_\kappa^{ij} \frac{{\bf U}_\kappa^{ij,n+1}-{\bf U}_\kappa^{ij,n}}{\Delta t^{(n)}} + {\bf R}_\kappa^{ij}({\bf u}_h^{(n)}) = 0 \quad \forall \kappa\in X_h, \; 0\leq i,j\leq p, \; n\geq 0,
\end{equation}

\noindent where the vector of residuals ${\bf R}_\kappa^{ij}(\cdot)$ is defined by \cref{eq:semi-discr_DGSEM-res}. The projection of the initial condition \cref{eq:HRM_PDEs-b} onto the function space reads ${\bf U}_\kappa^{ij,0} = {\bf u}_0({\bf x}_\kappa^{ij})$ for all $\kappa$ in $X_h$ and $0\leq i,j\leq p$.

%
\subsection{Properties of the discrete scheme}\label{sec:ana_1st_order_time_discr}

We have the following results for the fully discrete solution of the DGSEM that guaranty its robustness and the preservation of stationary material interfaces. Let recall that ${\bf h}(\cdot,\cdot,\cdot)$ in \cref{eq:semi-discr_DGSEM,eq:semi-discr_DGSEM-res} is the relaxation-based ES and robust numerical flux \cref{eq:relax_flux}.

\begin{theorem}\label{th:pos_DG_BN}
Let $n\geq0$ and assume that ${\bf U}_{\kappa}^{ij,n}$ is in $\Omega^a$ for all $0\leq i,j\leq p$ and $\kappa$ in $X_h$, then under the CFL condition
\begin{equation}\label{eq:CFL_cond}
 \Delta t^{(n)}\max_{\kappa\in X_h}\max_{e\in{\cal E}_h}\frac{|\partial\kappa_e|}{|e|}\max_{0\leq k\leq p}\frac{|\partial\kappa|}{\tilde{J}_\kappa^k}\big|\lambda\big({\bf u}_h^\pm({\bf x}_e^{k},t^{(n)})\big)\big| < \frac{1}{2p(p+1)},
\end{equation}

\noindent where $\tilde{J}_\kappa^k:=\min_{e\in\partial\kappa}J_\kappa({\bf x}_e^k)$, $|\lambda(\cdot)|$ is defined in \cref{eq:3-pt-CFL_cond}, and the $\kappa_e$ such that $\kappa=\cup_{e\in\partial\kappa}\kappa_e$ are defined in \cref{th:positive-2D-FVO2-scheme}, we have
\begin{equation*}
 \langle{\bf u}_h^{(n+1)}\rangle_\kappa\in\Omega^a \quad \forall\kappa\in X_h.
\end{equation*}

Moreover, the scheme exactly resolves stationary material interfaces at interpolation points.
\end{theorem}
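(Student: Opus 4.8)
The plan is to establish the two assertions separately: the positivity of the cell average relies on \cref{th:positive-2D-FVO2-scheme} together with the robustness of the relaxation flux (point~(v) in \cref{sec:flux_h_from_H}), while the preservation of stationary material interfaces follows from a direct substitution into the residual \cref{eq:semi-discr_DGSEM-res}.

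\emph{Positivity of the cell average.} First I would sum the fully discrete update \cref{eq:fully-discr_DGSEM} over all nodes $0\leq i,j\leq p$ after dividing by $|\kappa|$, so that, by \cref{eq:cell-average}, the time-derivative term collapses to $\tfrac{1}{\Delta t^{(n)}}\big(\langle{\bf u}_h^{(n+1)}\rangle_\kappa-\langle{\bf u}_h^{(n)}\rangle_\kappa\big)$. The crucial reduction is that the two-point volume contributions in \cref{eq:semi-discr_DGSEM-res} telescope: using the diagonal-norm SBP property $\omega_iD_{ik}+\omega_kD_{ki}=\delta_{ip}\delta_{kp}-\delta_{i0}\delta_{k0}$ together with the symmetry \cref{eq:symmetric_flux} and consistency \cref{eq:consistent_flux} of ${\bf h}_{ec}$, the interior flux-differencing terms cancel pairwise and only boundary values survive. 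Combined with the discrete metric identities valid on straight-sided cells, the nodal sum of ${\bf R}_\kappa^{ij}$ reduces to a pure surface sum, yielding a finite-volume update of the cell average of the form
\begin{equation*}
 \langle{\bf u}_h^{(n+1)}\rangle_\kappa = \langle{\bf u}_h^{(n)}\rangle_\kappa - \tfrac{\Delta t^{(n)}}{|\kappa|}\sum_{e\in\partial\kappa}\sum_{k=0}^p\omega_k\tfrac{|e|}{2}{\bf h}\big({\bf u}_h^-({\bf x}_e^k),{\bf u}_h^+({\bf x}_e^k),{\bf n}_e\big),
\end{equation*}
involving only the robust relaxation flux \cref{eq:relax_flux} at surface quadrature points. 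Next I would exhibit this update as a convex combination of states produced by the robust three-point scheme. Exploiting that the Gauss-Lobatto rule includes the endpoints with weights $\omega_0=\omega_p=\tfrac{2}{p(p+1)}$, I would rewrite $\langle{\bf u}_h^{(n)}\rangle_\kappa$ as a convex combination over the sub-triangle decomposition $\kappa=\cup_{e\in\partial\kappa}\kappa_e$ of \cref{fig:quad-with-subtriangles}, matching each surface flux with the boundary interpolation value carrying the endpoint weight. Each resulting partial update then has exactly the structure \cref{eq:2D-FVO2-scheme} of \cref{th:positive-2D-FVO2-scheme}, so by robustness of the flux it lies in $\Omega^a$ under \cref{eq:CFL-positive-2D-FVO2-scheme}; the factor $\tfrac{2}{p(p+1)}$ then produces the CFL constant $\tfrac{1}{2p(p+1)}$ in \cref{eq:CFL_cond}. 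Convexity of $\Omega^a$ finally gives $\langle{\bf u}_h^{(n+1)}\rangle_\kappa\in\Omega^a$.

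\emph{Stationary material interfaces.} A stationary material interface is characterized by ${\bf v}\equiv{\bf 0}$ and $\mathrm{p}\equiv\mathrm{const}$, with ${\bf Y}$ and $\rho$ possibly discontinuous. I would substitute this state into \cref{eq:semi-discr_DGSEM-res} and show ${\bf R}_\kappa^{ij}=0$. For the EC flux \cref{eq:EC_flux}, setting ${\bf v}={\bf 0}$ gives $h_\rho=h_{\rho Y_i}=h_{\rho E}=0$, while $\tfrac{\ol{\mathrm{p}\theta}}{\ol{\theta}}=\mathrm{p}$ since $\mathrm{p}$ is constant, so only the pressure term $\mathrm{p}\,{\bf n}$ survives in the momentum flux. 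For the interface flux, \cref{eq:sol-PR-relax} yields $u^\star=0$ and $\mathrm{p}^\star=\mathrm{p}$ when $u_L=u_R=0$ and $\mathrm{p}_L=\mathrm{p}_R$, hence ${\bf h}$ and ${\bf f}({\bf u}^-)\cdot{\bf n}$ both reduce to pure pressure and ${\bf d}\equiv{\bf 0}$. The mass, species and energy residuals then vanish identically, while the remaining momentum volume term $2\omega_i\omega_j\mathrm{p}\big(\sum_kD_{ik}\{J_\kappa\nabla\xi\}_{(i,k)j}+\sum_kD_{jk}\{J_\kappa\nabla\eta\}_{i(j,k)}\big)$ vanishes by the discrete metric identities on straight-sided cells. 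Thus ${\bf R}_\kappa^{ij}=0$ and the interface is exactly preserved at the interpolation points.

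The main obstacle I expect is the first part: carrying out the nodal summation with the unstructured metric terms and then packaging the resulting surface update into the exact convex-combination form required by \cref{th:positive-2D-FVO2-scheme} over the sub-triangles, so as to recover the endpoint weight $\tfrac{2}{p(p+1)}$ and hence the precise CFL constant in \cref{eq:CFL_cond}. By contrast, the material-interface statement, although it invokes the discrete metric identities, reduces to an essentially direct verification once the fluxes are evaluated on the equilibrium state.
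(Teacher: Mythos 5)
Your proposal is correct and takes essentially the same route as the paper's proof: summing the nodal updates so that, by conservation of the flux-differencing form, the cell average obeys a pure surface update; rewriting that update as a convex combination whose boundary-node pieces have exactly the structure of \cref{eq:2D-FVO2-scheme}, so that \cref{th:positive-2D-FVO2-scheme} together with the endpoint weight $\omega_0=\omega_p=\tfrac{2}{p(p+1)}$ yields the CFL constant in \cref{eq:CFL_cond}; and, for the interface claim, checking that both fluxes reduce to $(0,\dots,0,\mathrm{p}\,{\bf n}^\top,0)^\top$ so that ${\bf R}_\kappa^{ij}=0$. The only differences are presentational: you spell out the SBP telescoping and the metric-identity cancellation that the paper delegates to citations and to the phrase ``we easily obtain,'' while the paper makes the convex combination fully explicit through the $\tilde{J}_\kappa^k$ add-and-subtract device that you only sketch.
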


\begin{proof}
 The positivity of the solution at time $t^{(n+1)}$ relies on techniques introduced in \cite{perthame_shu_96,zhang2010positivity} to rewrite a conservative high-order scheme for the cell-averaged solution as a convex combination of positive quantities. First, consider the three-point scheme \cref{eq:3pt-scheme-a} with the relaxation-based ES numerical flux \cref{eq:relax_flux}. We know that this scheme preserves the solution in the set of states \cref{eq:HRM-set-of-states} under the CFL condition \cref{eq:3-pt-CFL_cond}. The finite volume scheme \cref{eq:2D-FVO2-scheme} on quadrilaterals will thus be also positive under the condition \cref{eq:CFL-positive-2D-FVO2-scheme} where $|\lambda(\cdot)|$ is defined in \cref{eq:3-pt-CFL_cond}.

Now summing \cref{eq:fully-discr_DGSEM} over $0\leq i,j\leq p$ gives for the cell-averaged solution
\begin{eqnarray*}
 \langle{\bf u}_h^{(n+1)}\rangle_\kappa &\overset{\cref{eq:fully-discr_DGSEM}}{=}& \langle{\bf u}_h^{(n)}\rangle_\kappa -\tfrac{\Delta t^{(n)}}{|\kappa|}\sum_{0\leq i,j\leq p}{\bf R}_\kappa^{ij}({\bf u}_h^{(n)}) \\
 &\overset{\cref{eq:semi-discr_DGSEM-res}}{=}& \langle{\bf u}_h^{(n)}\rangle_\kappa - \tfrac{\Delta t^{(n)}}{|\kappa|}\sum_{e\in\partial\kappa}\sum_{k=0}^p\omega_k\tfrac{|e|}{2}{\bf h}\big({\bf u}_h^-({\bf x}_e^{k},t^{(n)}),{\bf u}_h^+({\bf x}_e^{k},t^{(n)}),{\bf n}_e\big), 
\end{eqnarray*}

\noindent by conservation of the DGSEM \cite{winters_etal_16,wintermeyer_etal_17}. Multiplying $\langle{\bf u}_h^{(n)}\rangle_\kappa$ in the RHS by $\sum_{e\in\partial\kappa}\tfrac{|e|}{|\partial\kappa|}=1$ and using \cref{eq:cell-average},  we rewrite the above relation as
\begin{equation*}
  \langle{\bf u}_h^{(n+1)}\rangle_\kappa = \sum_{e\in\partial\kappa}\tfrac{|e|}{|\partial\kappa|}\sum_{0\leq i,j\leq p}\omega_i\omega_j\tfrac{J_\kappa^{ij}}{|\kappa|}{\bf U}_\kappa^{ij,n} - \tfrac{\Delta t^{(n)}}{|\kappa|}\sum_{e\in\partial\kappa}\sum_{k=0}^p\omega_k\tfrac{|e|}{2}{\bf h}\big({\bf u}_h^-({\bf x}_e^{k},t^{(n)}),{\bf u}_h^+({\bf x}_e^{k},t^{(n)}),{\bf n}_e\big).
\end{equation*}

Then, using $\omega_0=\omega_p$ and removing and adding the same quantity, we get
\begin{align*}
 \langle{\bf u}_h^{(n+1)}\rangle_\kappa &= \sum_{e\in\partial\kappa}\sum_{0\leq i,j\leq p}\tfrac{|e|}{|\partial\kappa|}\omega_i\omega_j\tfrac{J_\kappa^{ij}}{|\kappa|}{\bf U}_\kappa^{ij,n} - \sum_{e\in\partial\kappa}\sum_{k=0}^p\tfrac{|e|}{|\partial\kappa|}\omega_k\omega_0\tfrac{\tilde{J}_\kappa^k}{|\kappa|}{\bf u}_h^{-}({\bf x}_e^{k},t^{(n)}) \\
  &+ \sum_{k=0}^p\omega_k\omega_0\tfrac{\tilde{J}_\kappa^k}{|\kappa|}\Big(\sum_{e\in\partial\kappa}\tfrac{|e|}{|\partial\kappa|}{\bf u}_h^-({\bf x}_e^{k},t^{(n)})-\tfrac{\Delta t^{(n)}}{2\omega_0\tilde{J}_\kappa^k}\sum_{e\in\partial\kappa}|e|{\bf h}\big({\bf u}_h^-({\bf x}_e^{k},t^{(n)}),{\bf u}_h^+({\bf x}_e^{k},t^{(n)}),{\bf n}_e\big)\Big),
\end{align*}

\noindent where $\tilde{J}_\kappa^k$ is defined in \cref{th:pos_DG_BN}. We now use the fact that the traces ${\bf u}_h^{-}({\bf x}_e^{k})$ correspond to some DOFs ${\bf U}_\kappa^{ij}$ that share the edge $e$ (see \cref{fig:stencil_2D}): if ${\bf x}_\kappa^{ij}\in e$ then there exists $0\leq k\leq p$ such that ${\bf x}_e^{k}={\bf x}_\kappa^{ij}$, so ${\bf u}_h^{-}({\bf x}_e^{k},t^{(n)})={\bf U}_\kappa^{ij,n}$. Rearranging the two first terms and multiplying the last term in the RHS with $\sum_{f\in\partial\kappa}\tfrac{|f|}{|\partial\kappa|}=1$, we finally obtain
\begin{align*}
  \langle{\bf u}_h^{(n+1)}\rangle_\kappa 
  &= \sum_{e\in\partial\kappa}\sum_{0\leq i,j\leq p,{\bf x}_\kappa^{ij}\notin e}\tfrac{|e|}{|\partial\kappa|}\tfrac{\omega_i\omega_jJ_\kappa^{ij}}{|\kappa|}{\bf U}_\kappa^{ij,n} + \sum_{e\in\partial\kappa}\sum_{k=0}^p\tfrac{|e|}{|\partial\kappa|}\omega_k\omega_0\tfrac{J_\kappa({\bf x}_e^k)-\tilde{J}_\kappa^k}{|\kappa|}{\bf u}_h^-({\bf x}_e^{k},t^{(n)})  \\
  &+ \sum_{f\in\partial\kappa}\sum_{k=0}^p\tfrac{|f|}{|\partial\kappa|}\tfrac{\omega_k\omega_0\tilde{J}_\kappa^k}{|\kappa|}\Big(\sum_{e\in\partial\kappa}\!\!\tfrac{|e|{\bf u}_h^-({\bf x}_e^{k},t^{(n)})}{|\partial\kappa|} -\tfrac{\Delta t^{(n)}}{2\omega_0\tilde{J}_\kappa^k}\sum_{e\in\partial\kappa}\!\!|e|{\bf h}\big({\bf u}_h^-({\bf x}_e^{k},t^{(n)}),{\bf u}_h^+({\bf x}_e^{k},t^{(n)}),{\bf n}_e\big)\Big).
\end{align*}

The terms between brackets correspond to the RHS in \cref{eq:2D-FVO2-scheme} and are therefore positive under the condition \cref{eq:CFL_cond}. We thus conclude that $\langle{\bf u}_h^{(n+1)}\rangle_\kappa$ is a convex combination of positive quantities with weights $\tfrac{|e|}{|\partial\kappa|}\omega_i\omega_jJ$ with $J=J_\kappa^{ij}$, $\tilde{J}_\kappa^k$, or $J_\kappa({\bf x}_e^k)-\tilde{J}_\kappa^k\geq0$ from the definition of $\tilde{J}_\kappa^k$ in \cref{th:pos_DG_BN}.

Finally, suppose that the initial condition consists in a stationary material interface with states ${\bf Y}_L,\rho_L,{\bf v}=0,$ and $\mathrm{p}$ in $\Omega_L$ and ${\bf Y}_R,\rho_R,{\bf v}=0$, and $\mathrm{p}$ in $\Omega_R$ with $\ol{\Omega_L}\cup\ol{\Omega_R}=\ol{\Omega}$, then so do the DOFs. The numerical fluxes \cref{eq:EC_flux} and \cref{eq:relax_flux} reduce to ${\bf h}_{ec}({\bf u}^-,{\bf u}^+,{\bf n}) = {\bf h}({\bf u}^-,{\bf u}^+,{\bf n}) = (0,0,\mathrm{p}{\bf n}^\top,0)^\top$ and we easily obtain from \cref{eq:semi-discr_DGSEM-res} that ${\bf R}_\kappa^{ij}({\bf u}_h^{(n)})=0$ so stationary contacts remain stationary for all times and the DOFs are the exact values. Note that when the discontinuity $\ol{\Omega_L}\cap\ol{\Omega_R}$ corresponds to mesh interfaces, the relaxation based approximate Riemann solver \cref{eq:relax-ARS-xt} provides the exact solution and the contact discontinuity is exactly resolved within cell elements.\qed
\end{proof}

\begin{remark}
 The factor $\tfrac{2}{p(p+1)}=\omega_0=\omega_p$ in \cref{eq:CFL_cond} compared to \cref{eq:CFL-positive-2D-FVO2-scheme} may be compared with the results in \cite{zhang2010positivity} obtained on Cartesian meshes. Though conditions \cref{eq:CFL_cond} and \cref{eq:CFL-positive-2D-FVO2-scheme} are not optimal, they are sufficient for our purpose with the assumption of a shape-regular mesh. We refer to \cite{calgaro_etal_13} and references therein for a review on sharp CFL conditions in the context of finite volume schemes.
\end{remark}

\subsection{Limiting strategy}\label{sec:limiters}

The properties in \cref{th:pos_DG_BN} hold only for the cell-averaged numerical solution at time $t^{(n+1)}$, which is not sufficient for robustness and stability of numerical computations. We use the a posteriori limiter introduced in \cite{zhang2010positivity} to extend positivity of the solution at nodal values within elements in order to guaranty robustness of the DGSEM which requires $\rho>0$, $\rho_i>0$, and $e>0$. From \cref{eq:partial_densities,eq:mixture_r_Cv_Cp} this imposes $r({\bf Y})=r_{n_c}+\sum_{i=1}^{n_c-1}Y_i(r_i-r_{n_c})>0$. Without loss of generality we select the $n_c$th component as one satisfying $r_{n_c}=\min_{1\leq r_i\leq n_c}r_i$ and we impose $Y_{1\leq i<n_c}>-\tfrac{r_{n_c}}{(n_c-1)(r_i-r_{n_c})}$. We then enforce positivity of nodal values through
\begin{subequations}\label{eq:pos_limiter}
\begin{align}
 \breve{\rho}_\kappa^{ij,n+1} &= \theta_\kappa^\rho{\rho}_\kappa^{ij,n+1}+(1-\theta_\kappa^\rho)\langle{\rho}_h^{(n+1)}\rangle_\kappa, \quad 
 \tilde{\rho Y_k}_{\kappa}^{ij,n+1} = \theta_\kappa^k{\rho Y_k}_{\kappa}^{ij,n+1}+(1-\theta_\kappa^k)\breve{\rho}_\kappa^{ij,n+1}\tfrac{\langle\rho Y_{i_h}^{(n+1)}\rangle_\kappa}{\langle\rho_{h}^{(n+1)}\rangle_\kappa}, \\
 \tilde{\bf V}_\kappa^{ij,n+1} &= \theta_\kappa^e\breve{\bf V}_\kappa^{ij,n+1} + (1-\theta_\kappa^e)\langle{\bf v}_h^{(n+1)}\rangle_\kappa \quad \forall 0\leq i,j\leq p, \quad \kappa \in  X_h,
\end{align}
\end{subequations}

\noindent with ${\bf V}=(\rho,\rho{\bf v}^\top,\rho E)^\top$, ${\bf v}_h$ the numerical solution for ${\bf V}$, and $0\leq \theta_\kappa^{\rho},\theta_\kappa^{1\leq i<n_c},\theta_\kappa^{e}\leq1$ defined by
\begin{align*}
 \theta_\kappa^{\rho} &= \min\Big(\frac{\langle\rho_h^{(n+1)}\rangle_\kappa-\varepsilon}{\langle\rho_h^{(n+1)}\rangle_\kappa-\rho_\kappa^{min}},1\Big), \quad \rho_\kappa^{min}=\min_{0\leq i,j\leq p} \rho_\kappa^{ij,n+1}, \\
 \theta_\kappa^{Y_i} &= \min\Big(\frac{\langle \rho Y_{i_h}^{(n+1)}\rangle_\kappa-\langle \rho_h^{(n+1)}\rangle_\kappa\big(\varepsilon-\tfrac{r_{n_c}}{(n_c-1)(r_i-r_{n_c})}\big)}{\langle \rho Y_{i_h}^{(n+1)}\rangle_\kappa-\langle \rho_h^{(n+1)}\rangle_\kappa Y_{i_\kappa}^{min}},1\Big), \quad Y_{i_\kappa}^{min}=\min_{0\leq k,l\leq p} \tfrac{\rho Y_{i_\kappa}^{kl,n+1}}{\rho_\kappa^{kl,n+1}}, \\
 \theta_\kappa^e &= \min_ {0\leq i,j\leq p}\Big(\theta_\kappa^{e,ij}: \quad e\big(\theta_\kappa^{e,ij}(\breve{\bf V}_\kappa^{ij,n+1}- \langle{\bf v}_h^{(n+1)}\rangle_\kappa) + \langle{\bf v}_h^{(n+1)}\rangle_\kappa\big) \geq \varepsilon\Big),
\end{align*}

\noindent and $\varepsilon=10^{-10}$ a parameter to guaranty positivity of nodal values, e.g., $\tilde{\rho}_\kappa^{ij,n+1}\geq\varepsilon>0$.


%
%
\section{Numerical experiments}\label{sec:num_xp}

In this section we present numerical experiments, obtained with the CFD code {\it Aghora} developed at ONERA \cite{renac_etal15}, on problems involving $n_c=2$ components in one and two space dimensions in order to illustrate the performance of the DGSEM derived in this work. Unless stated otherwise, we use a fourth-order accurate ($p=3$) scheme in space together with a four-stage third-order strong-stability preserving Runge-Kutta method \cite{spiteri_ruuth02}, while the limiter \cref{eq:pos_limiter} is applied at the end of each stage. We set $\gamma=\max(\gamma_1,\gamma_2)$ in \cref{eq:wave-estimate} and ensure the inequality in \cref{eq:gamma-max} by increasing the wave speed estimates $S_{X=L,R}$ by a factor $1.05$. The time step is evaluated through $\Delta t^{(n)}\max_{\kappa\in X_h}\tfrac{1}{|\kappa|}\sqrt{\sum_{e\in\kappa}|e|^2}|\lambda(\langle{\bf u}_h^{(n)}\rangle_\kappa)|\leq0.4$ where $\lambda(\cdot)$ is defined in \cref{eq:3-pt-CFL_cond}. This condition was seen to preserve the positivity of the solution during our experiments, while it constitutes a less complex and less restrictive condition than \cref{eq:CFL_cond}.

\subsection{Convection of void fraction and density waves}\label{sec:num_xp_HO_test_case}

We first consider the convection of void fraction and density waves in a flow with uniform velocity and pressure \cite{coquel_etal_DGSEM_BN_21}. Let $\Omega=(0,1)$, we set $\gamma_1=1.6$, $\gamma_2=1.4$, $C_{v_1}=2$, $C_{v_2}=1$, and solve the problem \cref{eq:HRM_PDEs} with periodic conditions and the initial condition
\begin{equation*}
 Y_{1_0}(x) = \tfrac{1}{2}+\tfrac{1}{4}\sin(4\pi x), \quad \rho_0(x) = 1+\tfrac{1}{2}\sin(2\pi x), \quad u_0(x) = 1, \quad \mathrm{p}_0(x) = 1 \quad \forall x\in\Omega.
\end{equation*}

\Cref{tab:density_wave_error} indicates the norms of the numerical error on density $e_h=\rho_h-\rho$ for different polynomial degrees and grid refinements with associated convergence rates in space. We use the five-stage fourth-order Runge-Kutta scheme from \cite{spiteri_ruuth02} for $p=3$. \red{Results obtained with the first-order three-point scheme \cref{eq:2D-FV-scheme} (referred to as $p=0$) are also provided for the sake of comparison.} The expected $p+1$ order of convergence is recovered with the present method.

\begin{table}
     \begin{center}
     \caption{Void fraction and density waves: norms of the error at time $t=5$ and associated orders of convergence.}
     \begin{tabular}{clcccccc}
        \noalign{\smallskip}\hline\noalign{\smallskip}
		$p$ & $1/N$ & $\|e_h\|_{L^1(\Omega)}$  & ${\cal O}_1$ & $\|e_h\|_{L^2(\Omega)}$  & ${\cal O}_2$ & $\|e_h\|_{L^\infty(\Omega)}$  & ${\cal O}_\infty$ \\
        \noalign{\smallskip}\hline\noalign{\smallskip}
  	    & $1/16$ & $0.31913e\!+\!00$ & $-$    & $0.35219e\!+\!00$ & $-$     & $0.48878e\!+\!00$ & $-$ \\
  	    & $1/32$ & $0.29791e\!+\!00$ & $0.10$ & $0.33037e\!+\!00$ & $0.09$  & $0.46538e\!+\!00$ & $0.07$ \\
  	$0$ & $1/64$ & $0.23717e\!+\!00$ & $0.33$ & $0.26332e\!+\!00$ & $0.33$  & $0.37208e\!+\!00$ & $0.32$ \\
  	    & $1/128$& $0.15798e\!+\!00$ & $0.59$ & $0.17549e\!+\!00$ & $0.58$  & $0.24836e\!+\!00$ & $0.58$ \\
  	    & $1/256$& $0.92549e\!-\!01$ & $0.77$ & $0.10288e\!+\!00$ & $0.77$  & $0.14642e\!+\!00$ & $0.76$ \\
        \noalign{\smallskip}\hline\noalign{\smallskip}
  	    & $1/4$	 & $0.24745e\!+\!00$ & $-$     & $0.34889e\!+\!00$ & $-$     & $0.49518e\!+\!00$ & $-$ \\
  	    & $1/8$	 & $0.39647e\!+\!00$ & $-0.68$ & $0.43745e\!+\!00$ & $-0.33$ & $0.62568e\!+\!00$ & $-0.34$ \\
  	$1$ & $1/16$ & $0.20853e\!+\!00$ &  $0.93$ & $0.23087e\!+\!00$ & $0.92$  & $0.34328e\!+\!00$ & $0.87$ \\
  	    & $1/32$ & $0.60157e\!-\!01$ &  $1.79$ & $0.67213e\!-\!01$ & $1.78$  & $0.10832e\!+\!00$ & $1.66$ \\
  	    & $1/64$ & $0.17318e\!-\!01$ &  $1.80$ & $0.18977e\!-\!01$ & $1.82$  & $0.34753e\!-\!01$ & $1.64$ \\
        \noalign{\smallskip}\hline\noalign{\smallskip}
  	    & $1/4$	 & $0.82260e\!-\!01$ & $-$    & $0.10338e\!+\!00$ & $-$    & $0.16582e\!+\!00$ & $-$ \\
  	    & $1/8$	 & $0.85848e\!-\!02$ & $3.26$ & $0.11908e\!-\!01$ & $3.12$ & $0.25905e\!-\!01$ & $2.68$ \\
  	$2$ & $1/16$ & $0.21406e\!-\!02$ & $2.00$ & $0.28735e\!-\!02$ & $2.05$ & $0.74976e\!-\!02$ & $1.79$ \\
  	    & $1/32$ & $0.25474e\!-\!03$ & $3.08$ & $0.35685e\!-\!03$ & $3.01$ & $0.12013e\!-\!02$ & $2.64$ \\
  	    & $1/64$ & $0.23934e\!-\!04$ & $3.41$ & $0.33731e\!-\!04$ & $3.40$ & $0.13061e\!-\!03$ & $3.20$ \\
        \noalign{\smallskip}\hline\noalign{\smallskip}
  	    & $1/4$	 & $0.56380e\!-\!02$ & $-$    & $0.74671e\!-\!02$ & $-$    & $0.14145e\!-\!01$ & $-$ \\
  	    & $1/8$	 & $0.16611e\!-\!02$ & $1.76$ & $0.21002e\!-\!02$ & $1.83$ & $0.45926e\!-\!02$ & $1.62$ \\
  	$3$ & $1/16$ & $0.92849e\!-\!04$ & $4.16$ & $0.13191e\!-\!03$ & $3.99$ & $0.43411e\!-\!03$ & $3.40$ \\
  	    & $1/32$ & $0.30461e\!-\!05$ & $4.96$ & $0.44199e\!-\!05$ & $4.90$ & $0.19246e\!-\!04$ & $4.50$ \\
  	    & $1/64$ & $0.30281e\!-\!06$ & $3.33$ & $0.42853e\!-\!06$ & $3.37$ & $0.14364e\!-\!05$ & $3.74$ \\
        \noalign{\smallskip}\hline\noalign{\smallskip}
    \end{tabular}
    \label{tab:density_wave_error}
    \end{center}
\end{table}

\subsection{One-dimensional shock-tube problems}

Let consider Riemann problems associated with the initial condition ${\bf u}_0(x)={\bf u}_L$ if $x<x_s$ and ${\bf u}_R$ if $x>x_s$ (see \cref{tab:RP_IC} for details).

\begin{table}
     \begin{bigcenter}
     \caption{Initial conditions and physical parameters of Riemann problems with ${\bf\cal U}=(Y_1,\rho,u,\mathrm{p})^\top$.}
     \begin{tabular}{lllcccccc}
        \noalign{\smallskip}\hline\noalign{\smallskip}
        test & left state ${\bf\cal U}_L$ & right state ${\bf\cal U}_R$ & $x_s$ & $t$ & $\gamma_1$ & $C_{v_1}$ & $\gamma_2$ & $C_{v_2}$ \\
        \noalign{\smallskip}\hline\noalign{\smallskip}
        RP0 & $(0.4, 2, 0, 1)^\top$ & $(0.6, 1.5, 0, 2)^\top$ & $0$ & $0.2$ & $1.5$ & $1$ & $1.3$ & $1$ \\
        RP1 & $(0.5, 1, 0, 1)^\top$ & $(0.5, 0.125, 0, 0.1)^\top$ & $0$ & $0.2$ & $1.5$ & $1$ & $1.3$ & $1$ \\
        RP2 & $(1, 1.602, 0, 10^6)^\top$ & $(0, 1.122, 0, 10^5)^\top$ & $-0.1$ & $3\times10^{-4}$ & $\frac{5}{3}$ & $3.12$  & $1.4$ & $0.743$ \\
        RP3 & $(0.2, 0.99988, -1.99931, 0.4)^\top$ & $(0.5, 0.99988, 1.99931, 0.4)^\top$ & $0$ & $0.15$ & $1.5$ & $1$  & $1.3$ & $1$ \\
        RP4 & $(1, 1, 0, 1)^\top$ & $(0, 0.1, 0, 1)^\top$ & $0$ & $0.08$ & $1.6$ & $1$  & $1.4$ & $1$ \\
        RP5 & $(1, 1, 1, 1)^\top$ & $(0, 0.1, 1, 1)^\top$ & $0$ & $0.08$ & $1.6$ & $1$  & $1.4$ & $1$ \\
        \noalign{\smallskip}\hline\noalign{\smallskip}
    \end{tabular}
     \label{tab:RP_IC}
    \end{bigcenter}
\end{table}

We first validate the entropy conservation of the numerical flux \cref{eq:EC_flux}. We thus replace the ES numerical flux ${\bf h}$ at interfaces in \cref{eq:semi-discr_DGSEM-res} by the EC flux \cref{eq:EC_flux}. We follow the experimental setup introduced in \cite{bohm_etal_18} and choose an initial condition corresponding to problem RP0 in \cref{tab:RP_IC} resulting in the development of weak shock and contact waves on a domain of unit length with periodic boundary conditions. As a result of entropy conservation of the space discretization, only the time integration scheme should modify the global entropy budget at the discrete level. We thus evaluate the difference
\begin{equation}\label{eq:entropy_conserv_error}
 e_h(t) := \sum_{\kappa\in X_h}|\kappa|\langle\eta({\bf u}_h)-\eta({\bf u}_0)\rangle_\kappa(t),
\end{equation}

\noindent which quantifies the difference between the discrete entropy at final time and the initial entropy over the domain $\Omega$. We observe in \cref{tab:test_EC} that the error \cref{eq:entropy_conserv_error} decreases to machine accuracy when refining the time step with third-order of convergence as asymptotic limit corresponding to the theoretical approximation order of the time integration scheme \cite{spiteri_ruuth02}. This validates the entropy conservation property of the numerical flux \cref{eq:EC_flux}. For comparison we also ran the same experiments when using the ES flux \cref{eq:relax_flux} at interfaces which confirmed a global entropy dissipation independent of the time step.

\begin{table}
     \begin{bigcenter}
     \caption{Entropy conservation error \cref{eq:entropy_conserv_error} and order of convergence ${\cal O}$ obtained for problem RP0 ($N=100$ cells, $p=3$) with either an EC or an ES numerical flux at interfaces.}
     \begin{tabular}{lccc}
       \noalign{\smallskip}\hline\noalign{\smallskip}
       flux & \multicolumn{2}{c}{EC} & ES \\
       \noalign{\smallskip}\hline\noalign{\smallskip}
       & $e_h(t)$ & ${\cal O}$ & $e_h(t)$ \\
        \noalign{\smallskip}\hline\noalign{\smallskip}
        $\Delta t$  & $-6.34260e\!-\!08$ & $1.16$ & $-7.69825e\!-\!07$ \\
        $\Delta t/2$  & $-1.33328e\!-\!08$ & $2.25$ & $-7.69777e\!-\!07$ \\
        $\Delta t/4$  & $-1.82337e\!-\!09$ & $2.87$ & $-7.69771e\!-\!07$ \\
        $\Delta t/8$ & $-2.30876e\!-\!10$ & $2.98$ & $-7.69770e\!-\!07$ \\
        $\Delta t/16$ & $-2.89133e\!-\!11$ & $3.00$ & $-7.69770e\!-\!07$ \\
        \noalign{\smallskip}\hline\noalign{\smallskip}
    \end{tabular}
     \label{tab:test_EC}
    \end{bigcenter}
\end{table}

Results for problems RP1 to RP5 are displayed in \cref{fig:solution_RPs,fig:solution_RPs_contact} where we compare the numerical solution in symbols with the exact solution in lines. Problem RP1 corresponds to the classical Sod problem for the compressible Euler equations since the mass fraction is uniform and corresponds to an equivalent $\gamma({\bf Y})=1.4$ for the mixture. Problem RP2 comes from \cite{houim_kuo_11} and corresponds to a He-N$_2$ shock tube problem, RP3 corresponds to a multicomponent near vacuum problem with two rarefaction waves, while RP4 and RP5 consist in material interfaces \cite{abgrall_karni_01}. We observe that the shock and contact waves are well captured and only some spurious oscillations of small amplitude are observed in RP2 which also exhibits a train of oscillations at the tail of the rarefaction wave. Positivity of the density is preserved in the near vacuum region which highlights the robustness of the scheme. Finally, the stationary contact wave in RP4 is exactly resolved as expected from \cref{th:pos_DG_BN}, while spurious oscillations inherent to \blue{discretely} conservative schemes \cite{abgrall_96,abgrall_karni_01} are observed around the moving interface in RP5 \blue{as pointed out in \cref{sec:intro_ECEF}}. 

\begin{figure}
\begin{bigcenter}
\subfloat{\begin{picture}(0,0) \put(-9,50){$Y_1$} \end{picture}}
\subfloat{\epsfig{figure=\LOCALFIGPATHRP 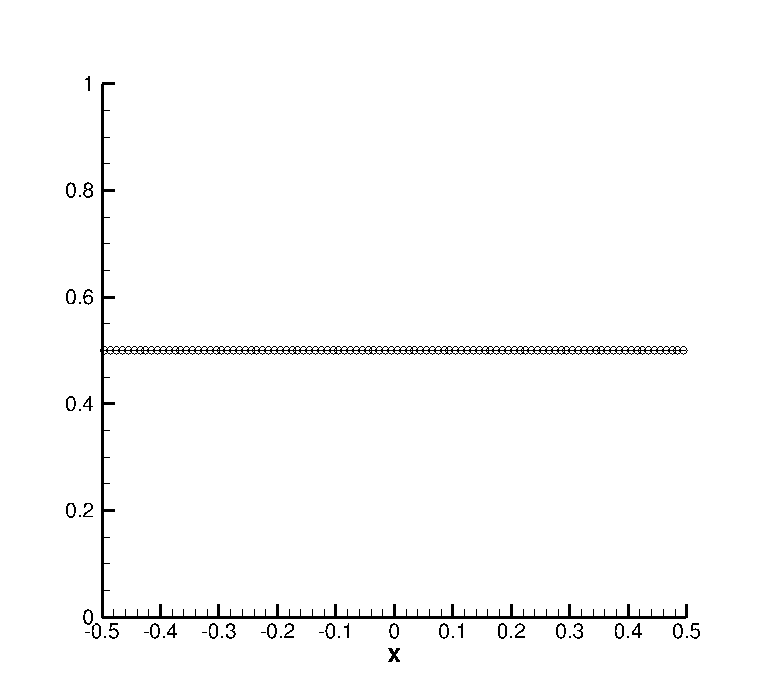 ,width=4.5cm}}
\subfloat{\epsfig{figure=\LOCALFIGPATHRP 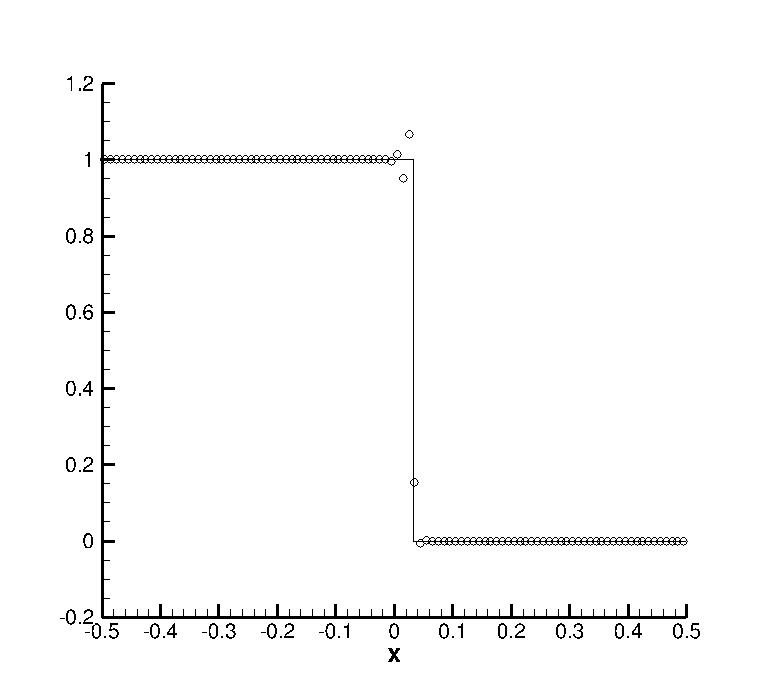 ,width=4.5cm}}
\subfloat{\epsfig{figure=\LOCALFIGPATHRP 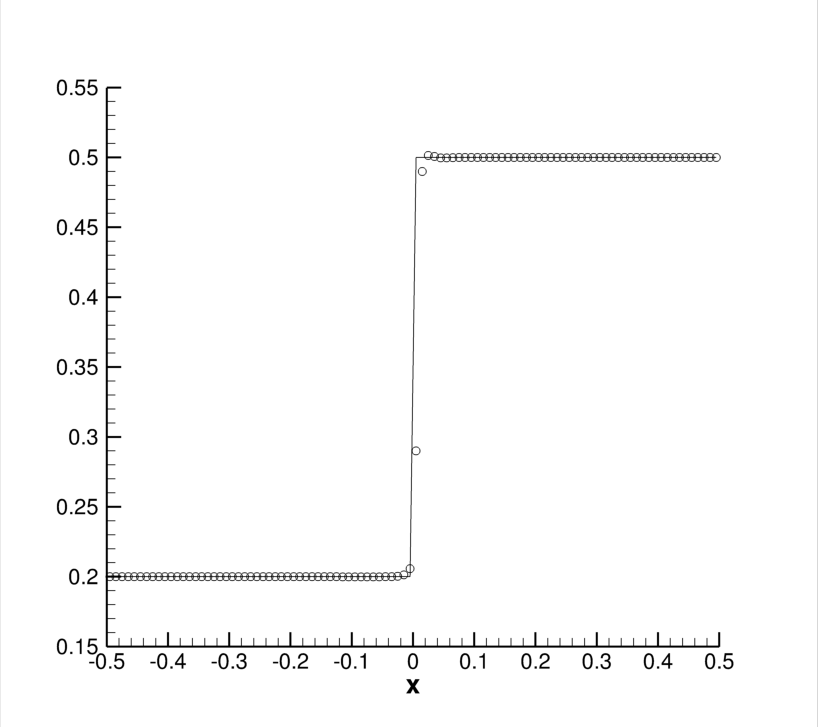 ,width=4.5cm}}\\
%
\subfloat{\begin{picture}(0,0) \put(-8,50){$\rho$} \end{picture}}
\subfloat{\epsfig{figure=\LOCALFIGPATHRP 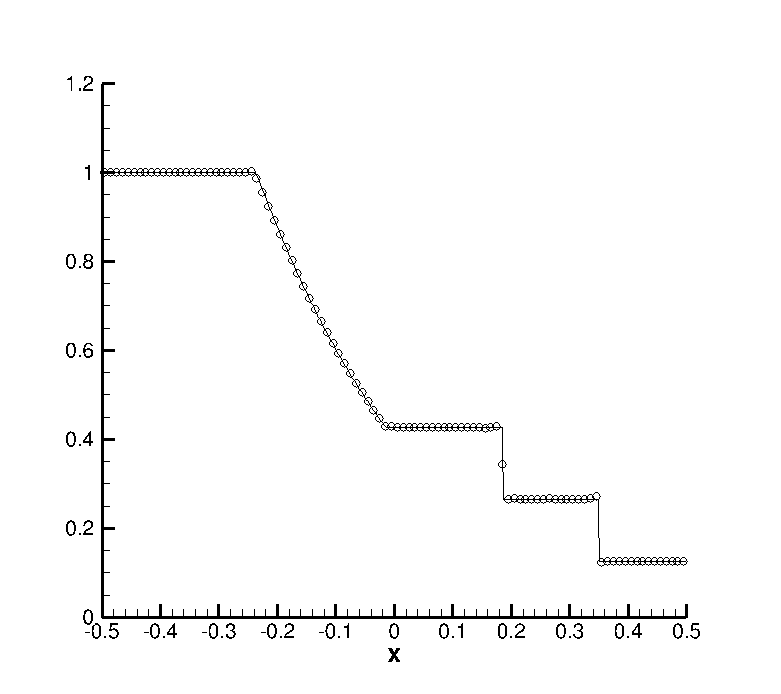 ,width=4.5cm}}
\subfloat{\epsfig{figure=\LOCALFIGPATHRP 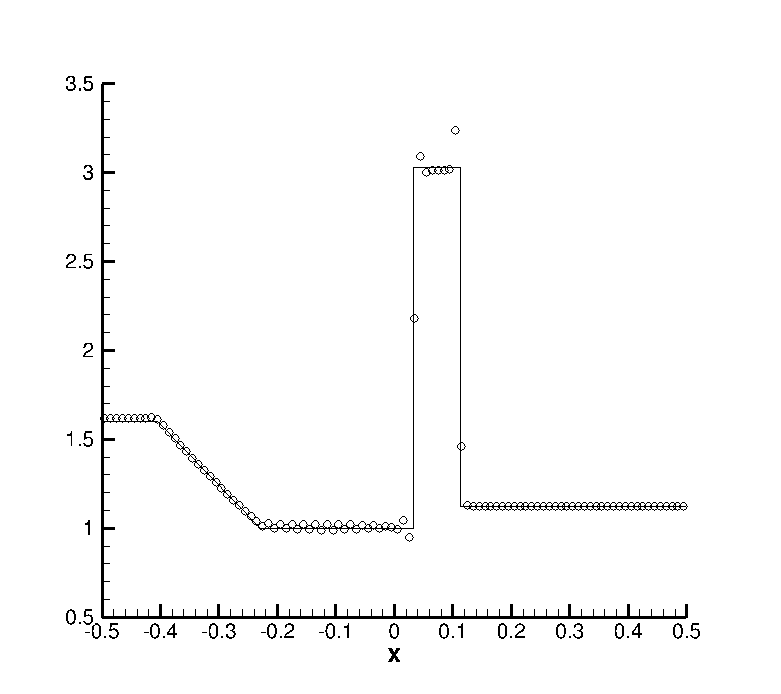 ,width=4.5cm}}
\subfloat{\epsfig{figure=\LOCALFIGPATHRP 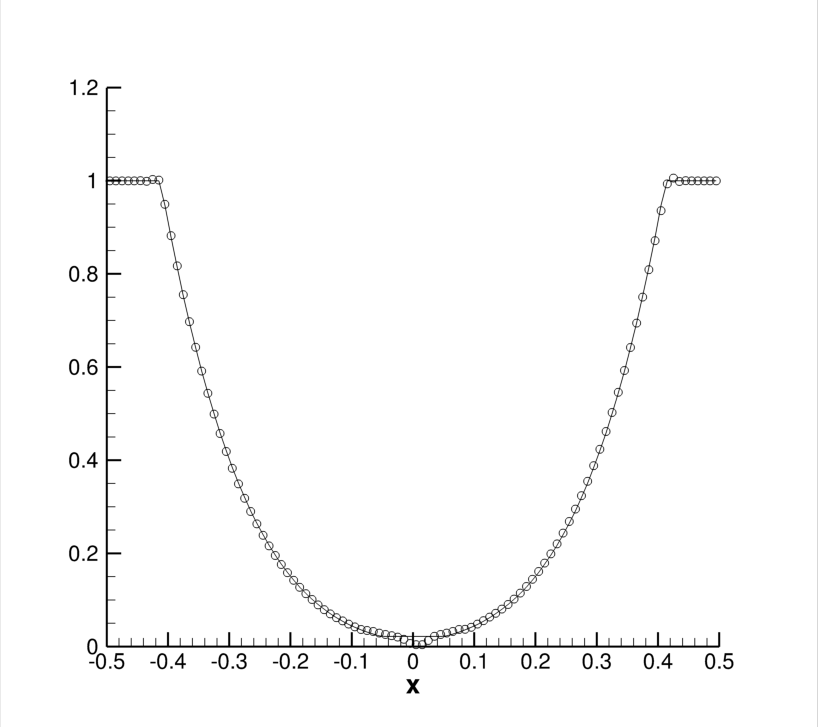 ,width=4.5cm}}\\
%
\subfloat{\begin{picture}(0,0) \put(-8,50){$u$} \end{picture}}
\subfloat{\epsfig{figure=\LOCALFIGPATHRP 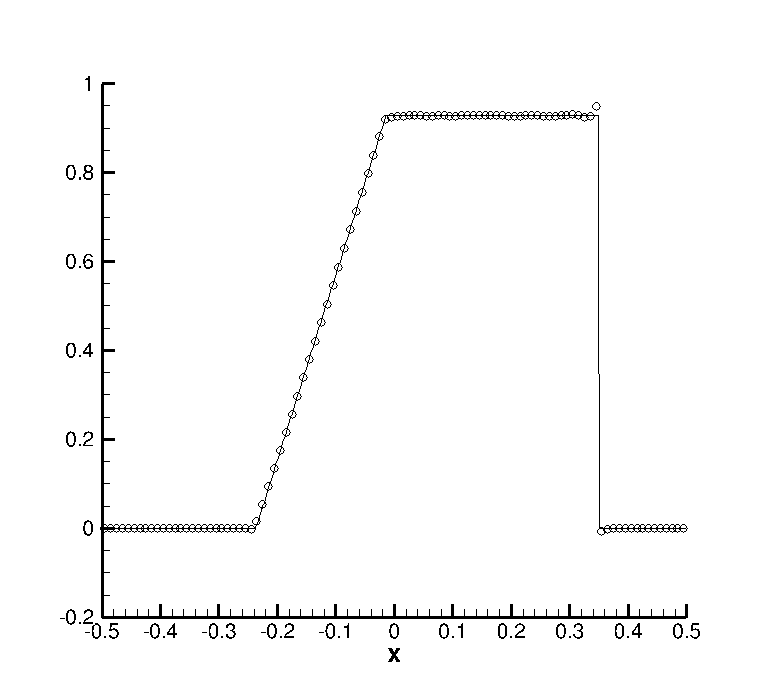 ,width=4.5cm}}
\subfloat{\epsfig{figure=\LOCALFIGPATHRP 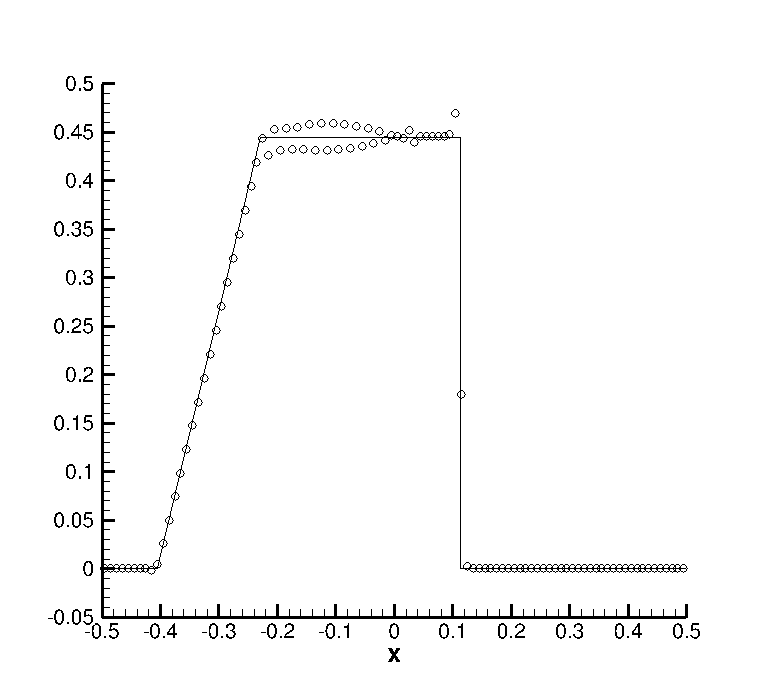 ,width=4.5cm}}
\subfloat{\epsfig{figure=\LOCALFIGPATHRP 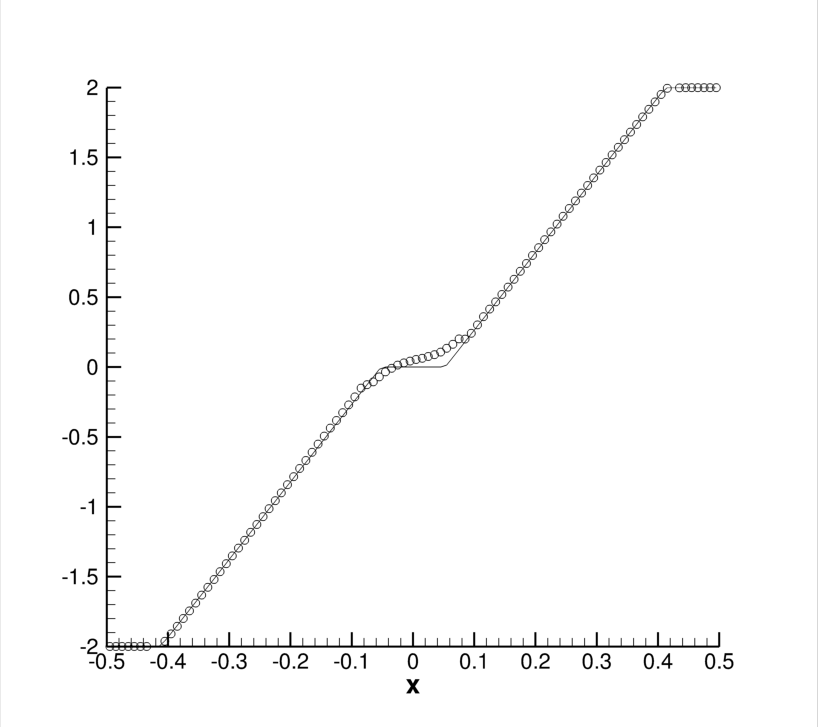 ,width=4.5cm}}\\
%
\subfloat{\begin{picture}(0,0) \put(-8,50){$\mathrm{p}$} \end{picture}}
\setcounter{subfigure}{0}
\subfloat[RP1]{\epsfig{figure=\LOCALFIGPATHRP 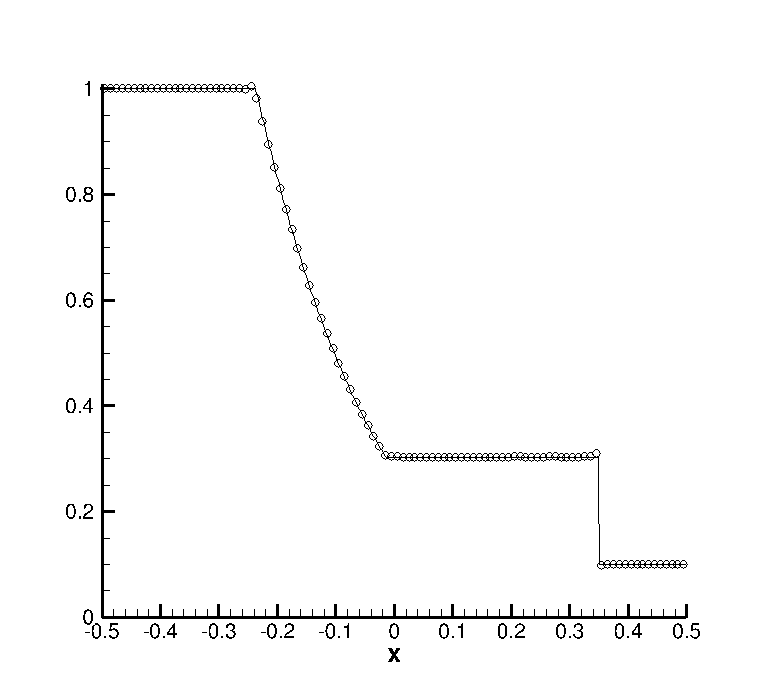 ,width=4.5cm}}
\subfloat[RP2]{\epsfig{figure=\LOCALFIGPATHRP 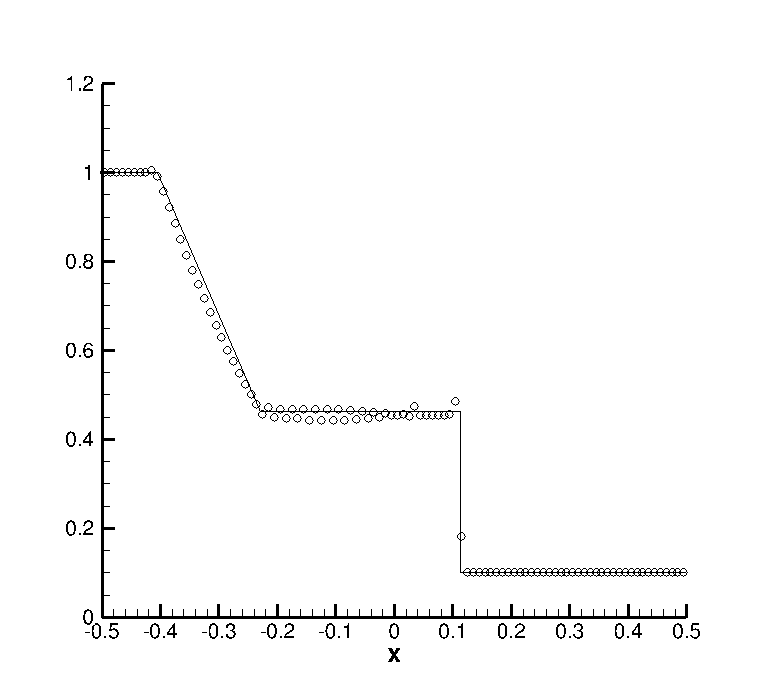 ,width=4.5cm}}
\subfloat[RP3]{\epsfig{figure=\LOCALFIGPATHRP 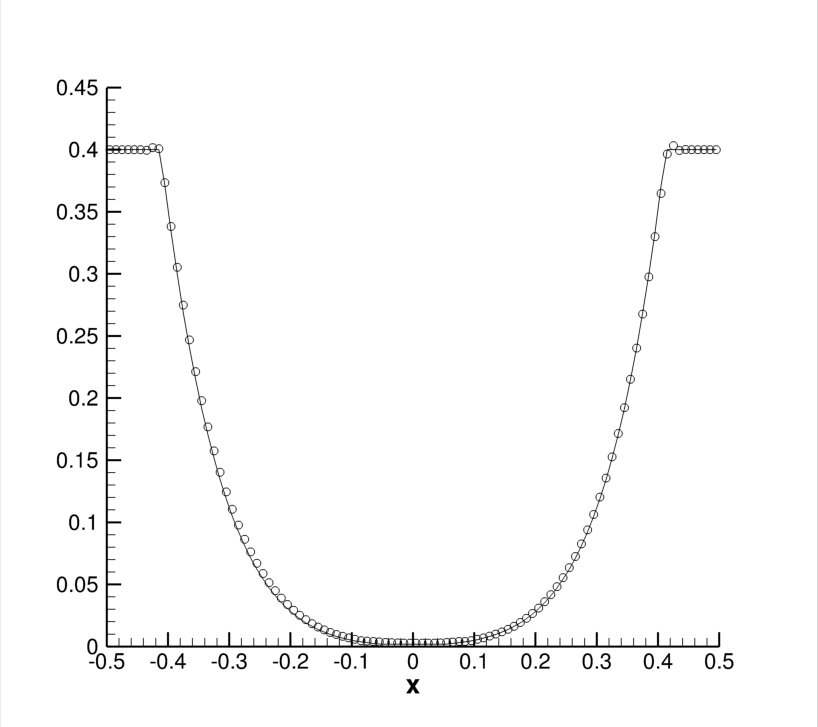 ,width=4.5cm}} 
%
\caption{Riemann problems RP1 to RP3 from \cref{tab:RP_IC} discretized with $p=3$ and $N=100$ cells (in RP2, velocity and pressure have been scaled by factors $10^3$ and $10^6$, respectively).}
\label{fig:solution_RPs}
\end{bigcenter}
\end{figure}

\begin{figure}
\begin{bigcenter}
\subfloat{\begin{picture}(0,0) \put(-19,50){RP4} \end{picture}}
\subfloat{\epsfig{figure=\LOCALFIGPATHRP 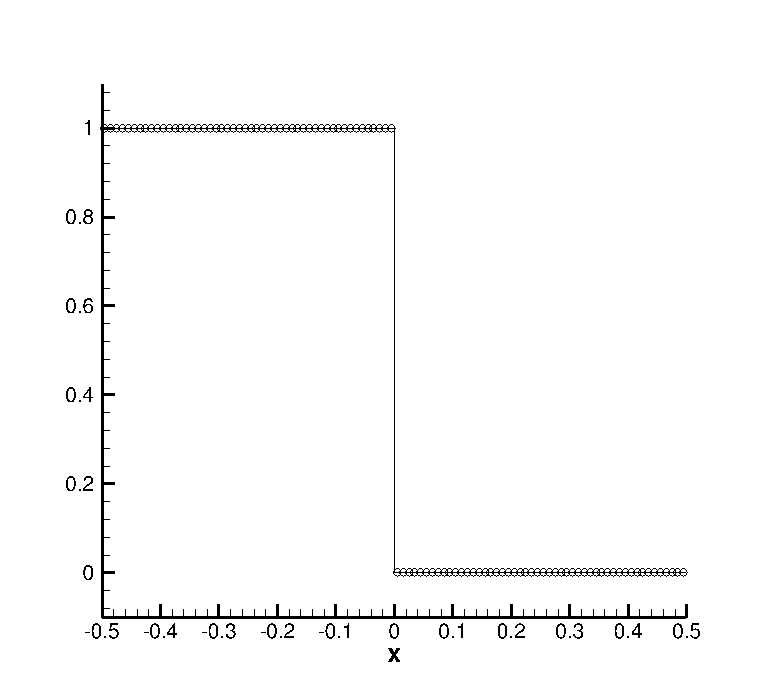 ,width=3.8cm}}
\subfloat{\epsfig{figure=\LOCALFIGPATHRP 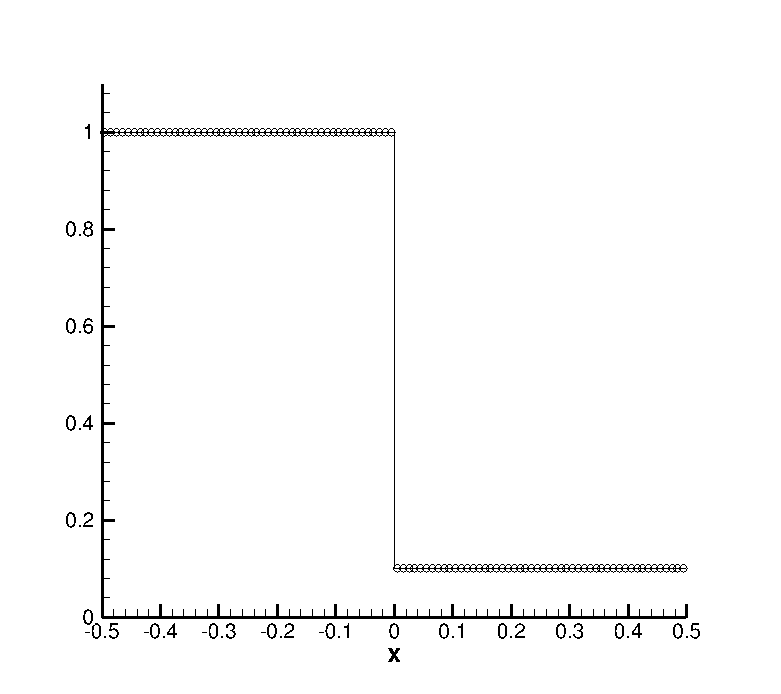 ,width=3.8cm}}
\subfloat{\epsfig{figure=\LOCALFIGPATHRP 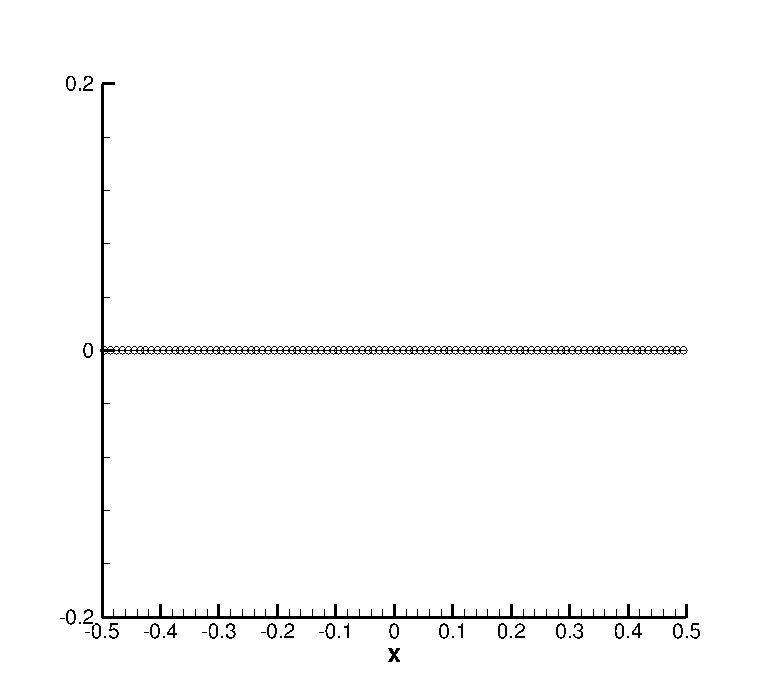   ,width=3.8cm}}
\subfloat{\epsfig{figure=\LOCALFIGPATHRP 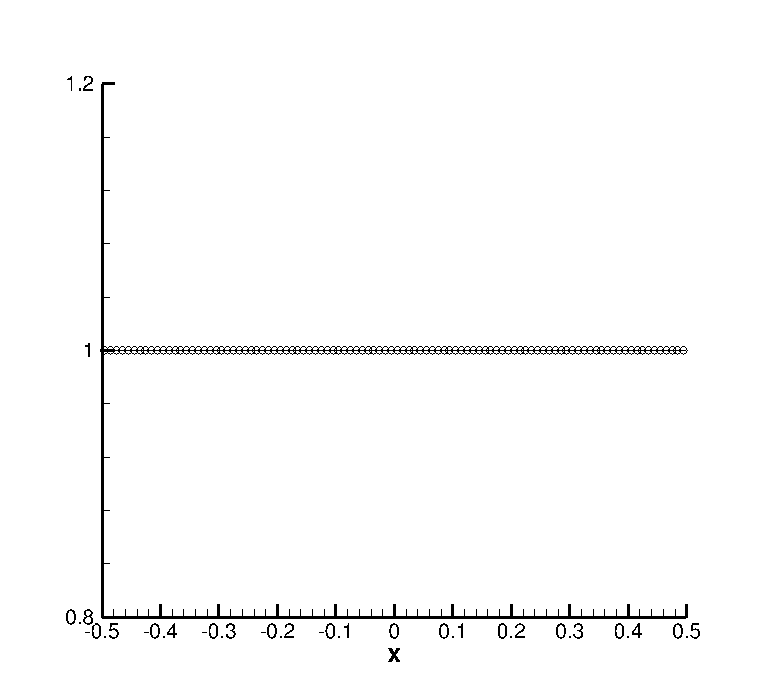   ,width=3.8cm}}\\
\subfloat{\begin{picture}(0,0) \put(-19,50){RP5} \end{picture}}
\setcounter{subfigure}{0}
\subfloat[$Y_1$]{\epsfig{figure=\LOCALFIGPATHRP 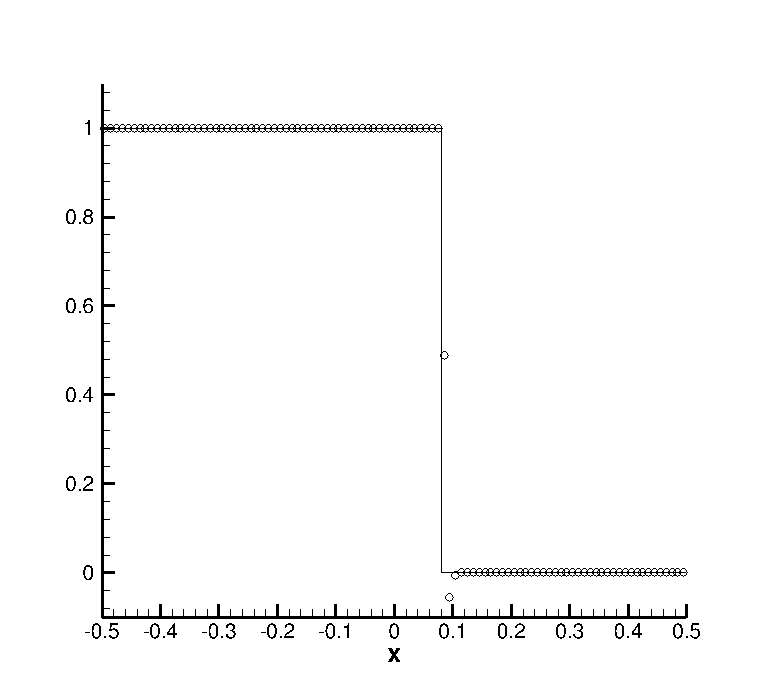  ,width=3.8cm}}
\subfloat[$\rho$]{\epsfig{figure=\LOCALFIGPATHRP 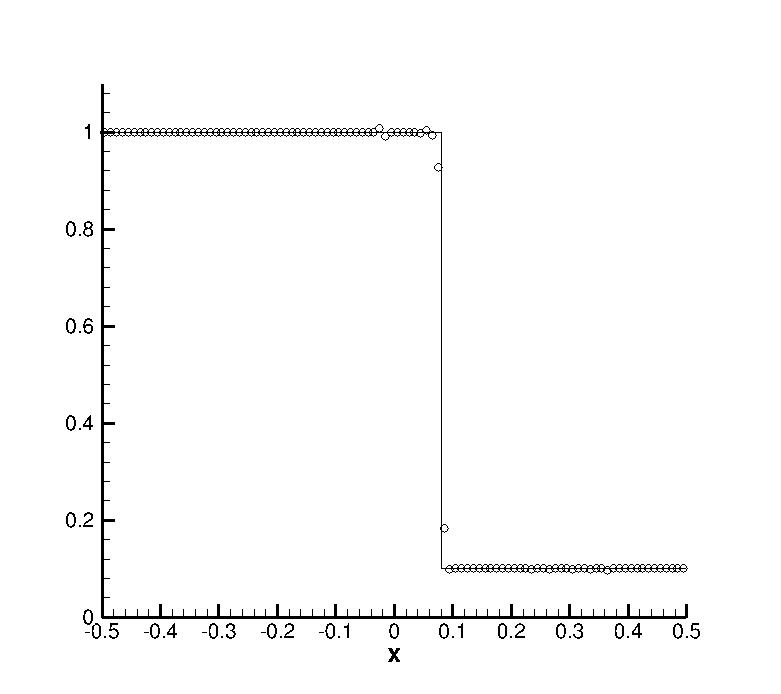 ,width=3.8cm}}
\subfloat[$u$]{\epsfig{figure=\LOCALFIGPATHRP 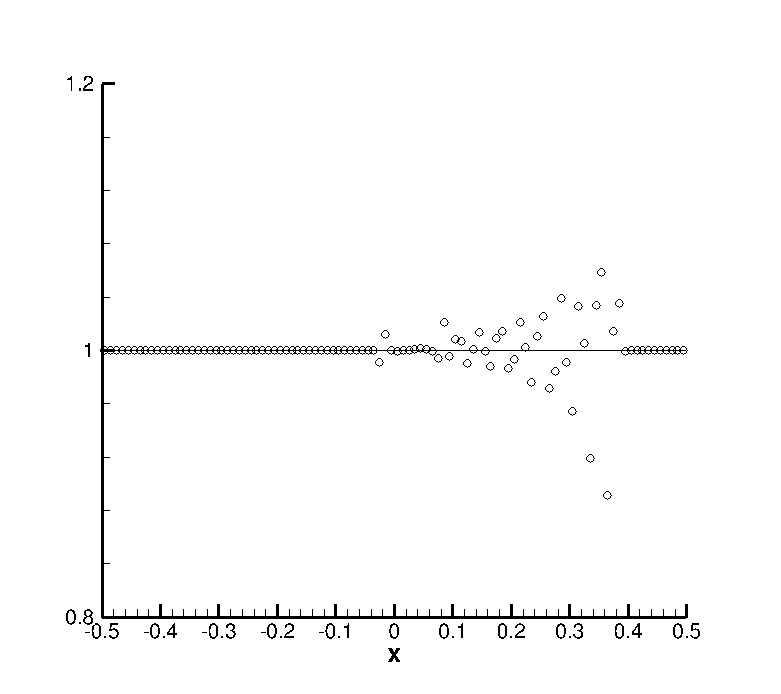      ,width=3.8cm}}
\subfloat[$\mathrm{p}$]{\epsfig{figure=\LOCALFIGPATHRP 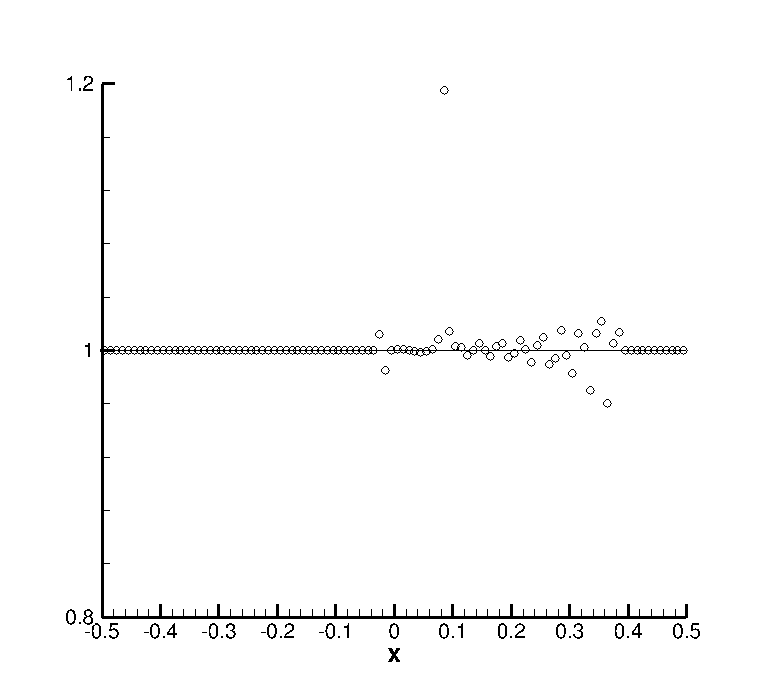 ,width=3.8cm}}
\caption{Riemann problems from \cref{tab:RP_IC} on isolated stationary (top) and moving (bottom) interfaces discretized with $p=3$ and $N=100$ cells.}
\label{fig:solution_RPs_contact}
\end{bigcenter}
\end{figure}

\subsection{Shock wave-helium bubble interaction}\label{sec:SBI_KT}

We now consider the interaction of a shock with a helium bubble \cite{haas-sturtenvant-87} which is commonly used to assess the resolution by numerical schemes of shock waves, material interfaces and their interaction in multiphase and multicomponent flows (see \cite{kawai_terashima_11,fedkiw_etal_99,quirk_karni_96,capuano_etal_18} and references therein).

The domain extends to $\Omega=[0,6.5]\times[0,0.89]$. A left moving $M=1.22$ normal shock wave in air is initially located at $x=4.5$ and interacts with a bubble of helium of unit diameter with center located at $x=3.5$ and $y=0$. Symmetry conditions are set to the top and bottom boundaries, while non reflecting conditions are applied to the left and right limits of the domain. The thermodynamical parameters of helium and air are $\gamma_1=1.648$, $C_{v_1}=6.89$ and $\gamma_2=1.4$, $C_{v_2}=1.7857$, respectively. Data are made nondimensional with the initial bubble diameter and pre-shock density, temperature and sound speed. We use an unstructured mesh with $N=238,673$ elements (see \Cref{fig:mesh_SBI}a). The complete setup of the initial condition can be found in \cite{kawai_terashima_11}. Note that this test case is usually computed including viscous effects. To avoid spurious oscillations at material interfaces in inviscid computations we regularize the initial condition of the bubble-air interface following \cite{billet_etal_08,kawai_terashima_11,houim_kuo_11}.

\begin{figure}
\begin{bigcenter}
  \subfloat[]{\epsfig{figure=\LOCALFIGPATHSBIKT 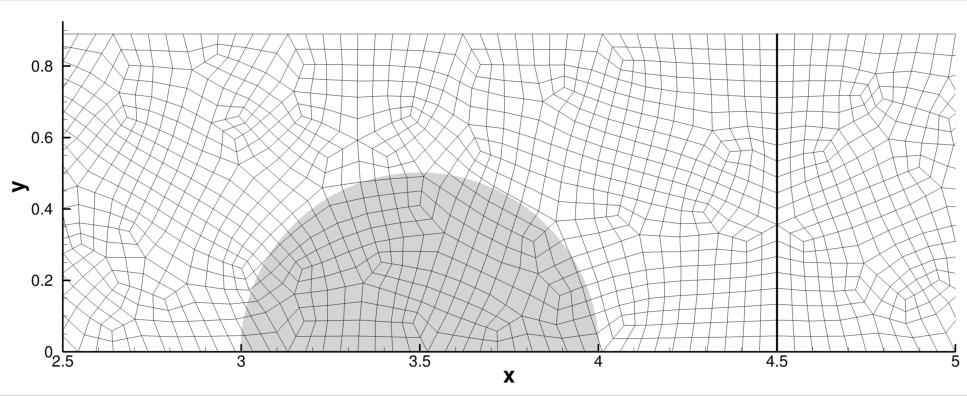,height=3.2cm,clip=true,trim=0.5cm 0.05cm 0cm 0.05cm}}
  \subfloat[]{\epsfig{figure=\LOCALFIGPATHSBIKT 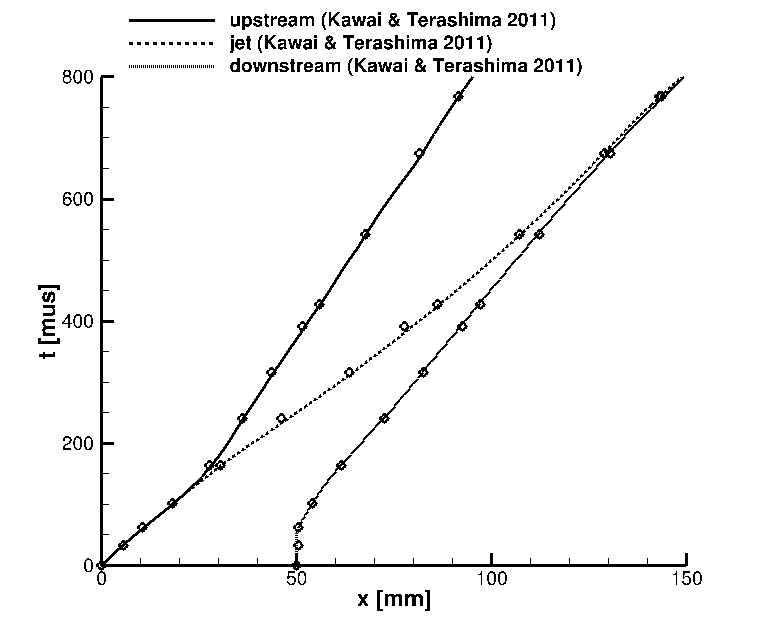,height=3.4cm}}
  \subfloat[]{\epsfig{figure=\LOCALFIGPATHSBIKT 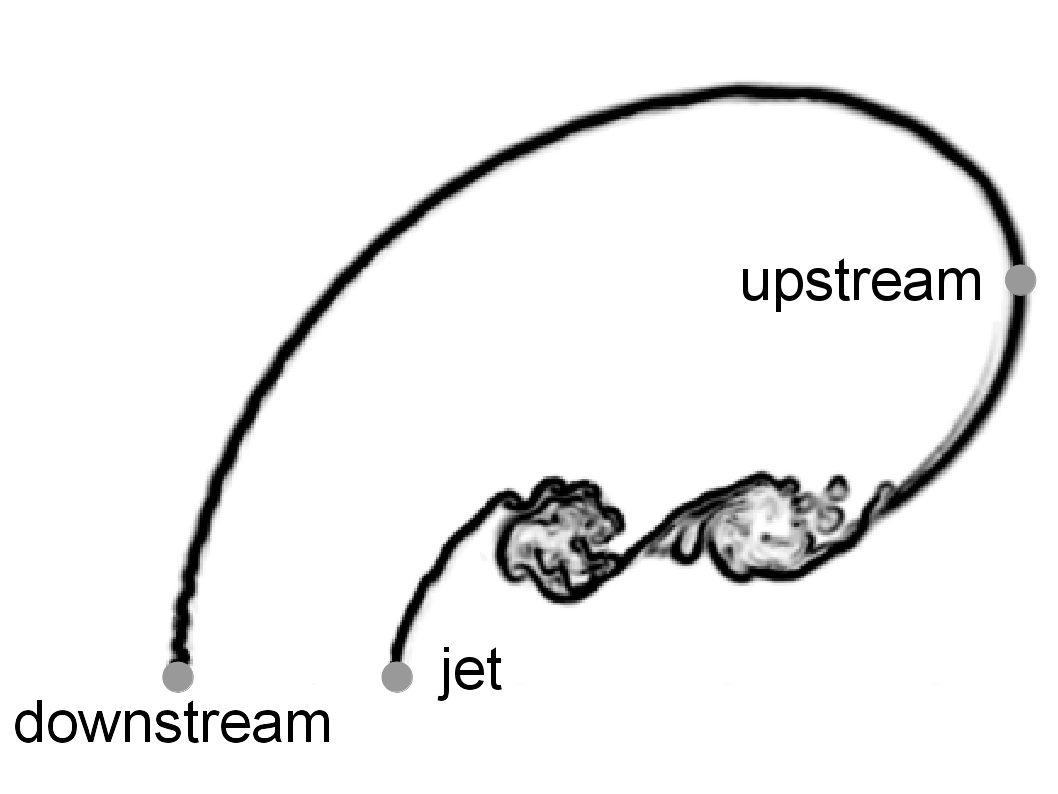,height=2.6cm}}
  \caption{Shock wave-helium bubble interaction: (a) example of unstructured mesh in the range $2.5\leq x\leq5$ with $N=790$ elements and initial positions of the shock and bubble; (b) space-time diagram of positions of three characteristic interface points (symbols) with comparison to \cite{kawai_terashima_11} (lines); (c) illustration of the characteristic interface points (gray filled circles).}
\label{fig:mesh_SBI}
\end{bigcenter}
\end{figure}

\Cref{fig:solution_SBI} displays contours of pressure, void fraction and numerical Schlieren obtained at different times initialized when the shock reaches the bubble. The shock and material interfaces are well resolved and the solution does not present significant spurious oscillations. The results are in good qualitative agreement with the experiment in \cite{haas-sturtenvant-87} and numerical simulations (see e.g., \cite{kawai_terashima_11}). In particular the shock dynamics and the bubble deformation are well reproduced, and vortices are generated along the bubble interface due to the Kelvin-Helmholtz instability. \Cref{fig:mesh_SBI} displays the $(x,t)$-positions of three characteristic interface points in close agreement with the results from \cite{kawai_terashima_11}. 


\begin{figure}
\begin{bigcenter}
\captionsetup[subfigure]{labelformat=empty}
\subfloat[$t=0\;\mu$s]{\epsfig{figure=\LOCALFIGPATHSBIKT 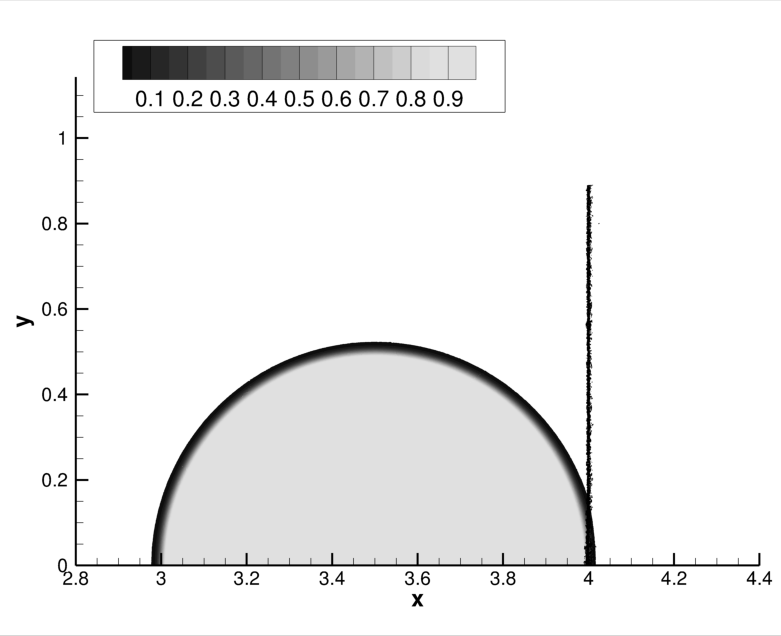,width=3.8cm}}
\subfloat{\epsfig{figure=\LOCALFIGPATHSBIKT 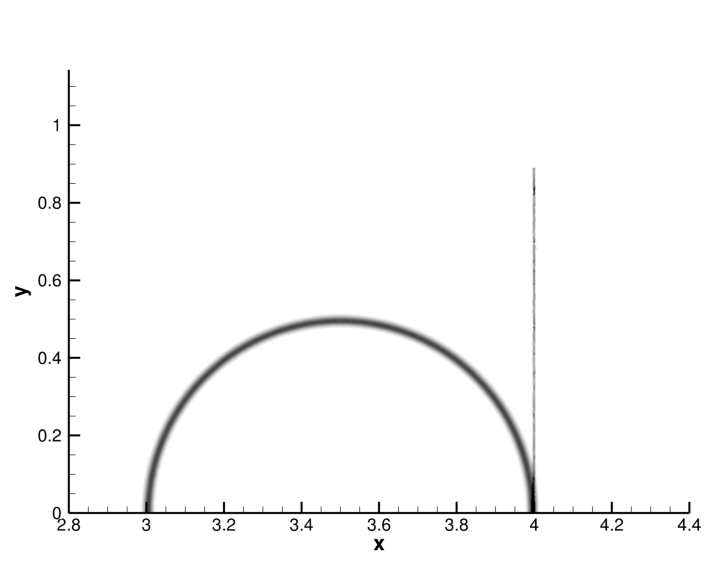,width=3.8cm}}
\subfloat[$t=240\;\mu$s]{\epsfig{figure=\LOCALFIGPATHSBIKT 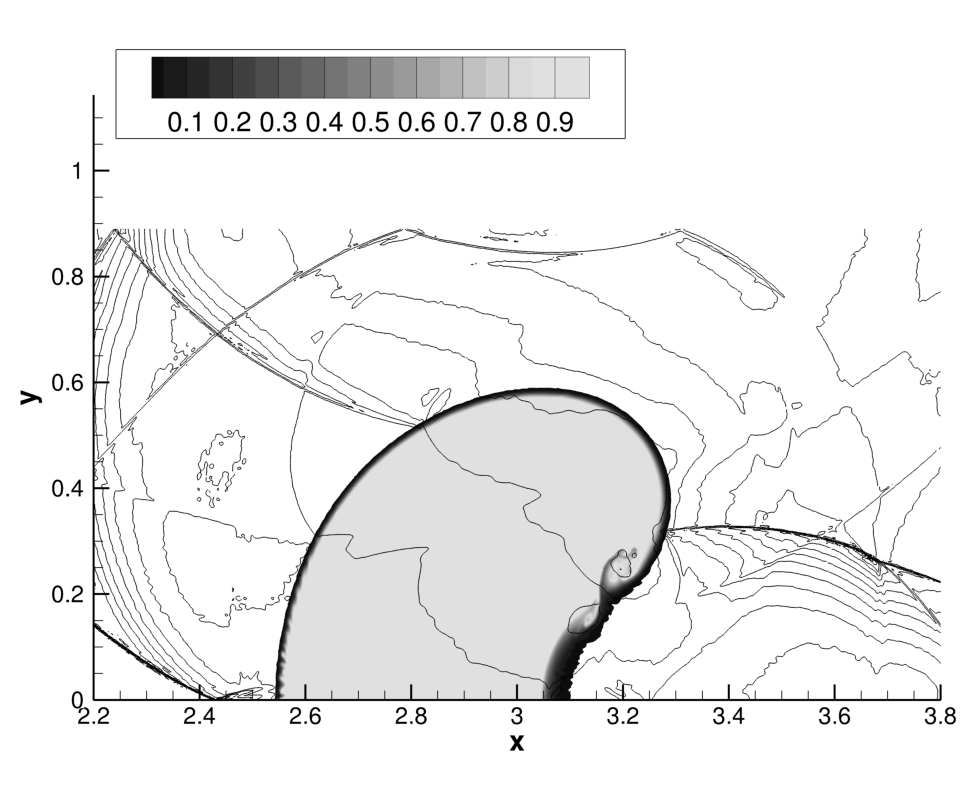,width=3.8cm}}
\subfloat{\epsfig{figure=\LOCALFIGPATHSBIKT 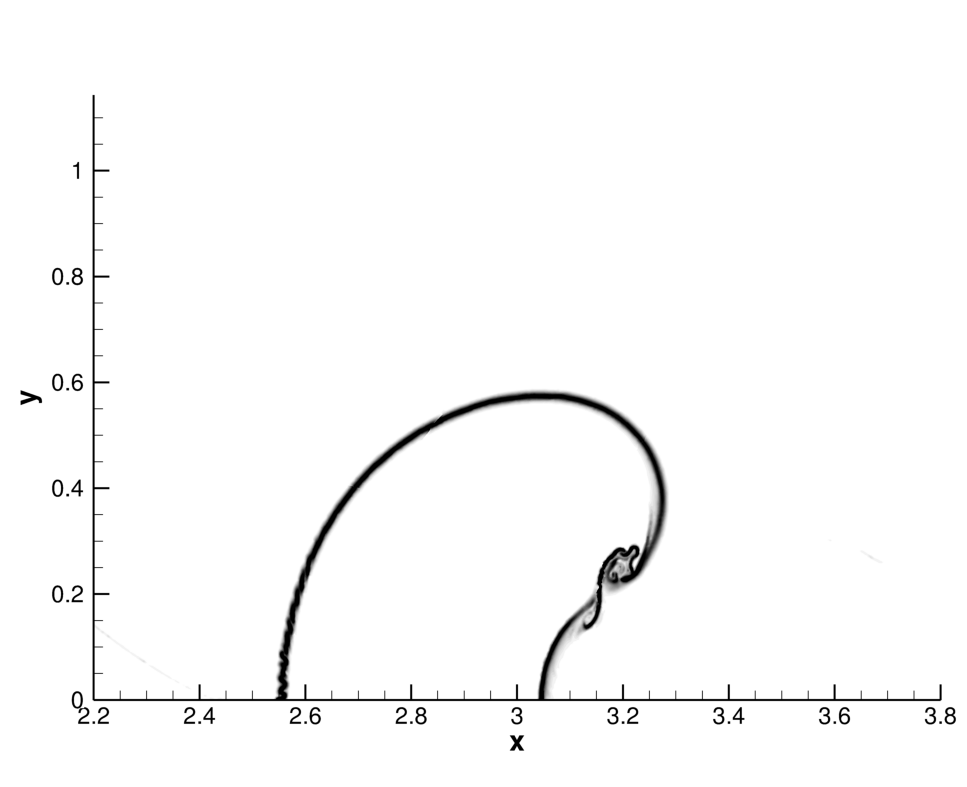,width=3.8cm}}\\
\subfloat[$t=32\;\mu$s]{\epsfig{figure=\LOCALFIGPATHSBIKT 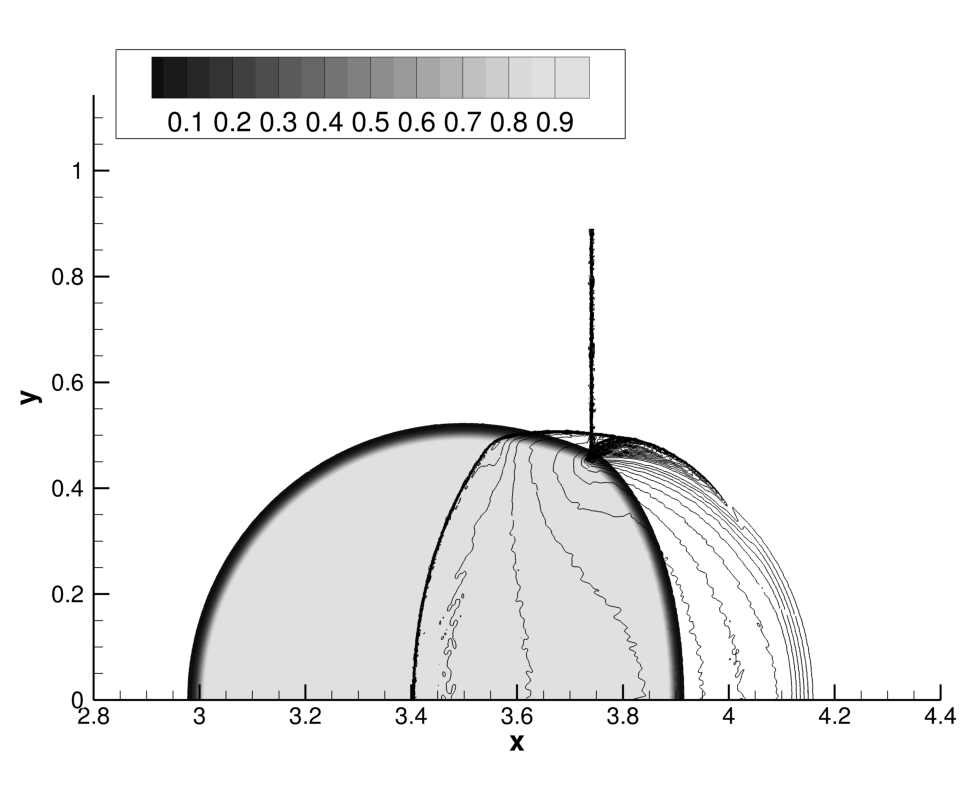,width=3.8cm}}
\subfloat{\epsfig{figure=\LOCALFIGPATHSBIKT 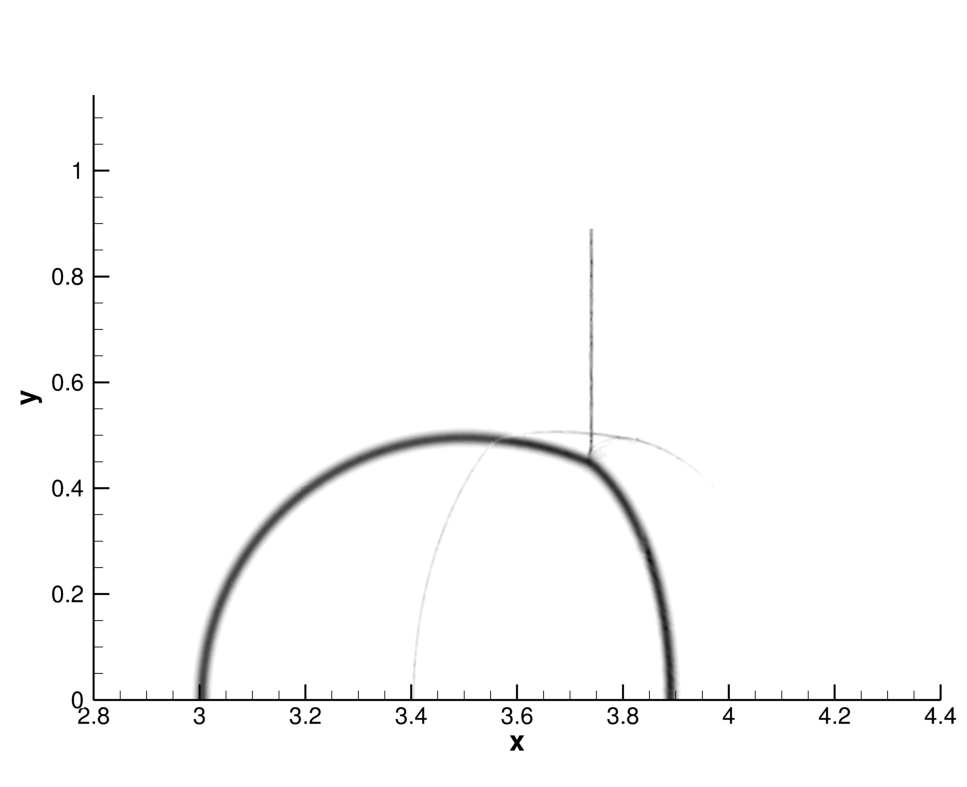,width=3.8cm}}
\subfloat[$t=427\;\mu$s]{\epsfig{figure=\LOCALFIGPATHSBIKT 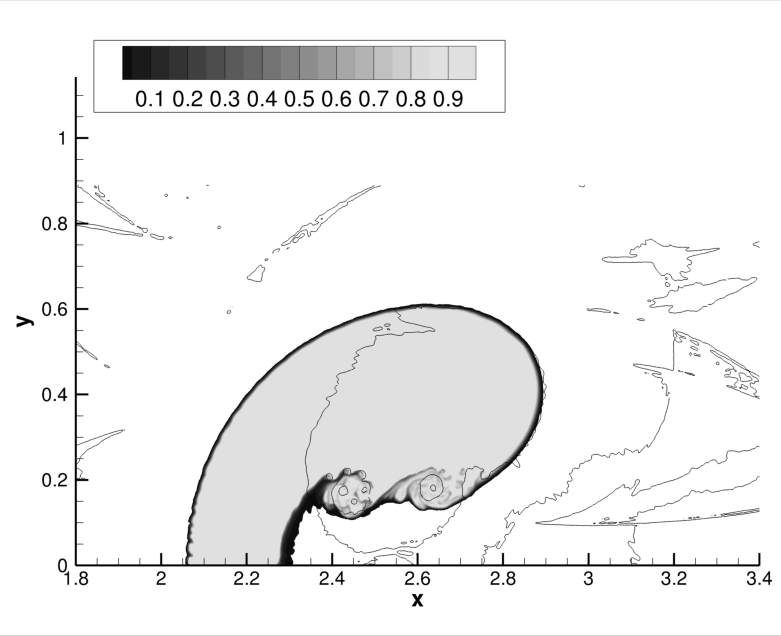,width=3.8cm}}
\subfloat{\epsfig{figure=\LOCALFIGPATHSBIKT 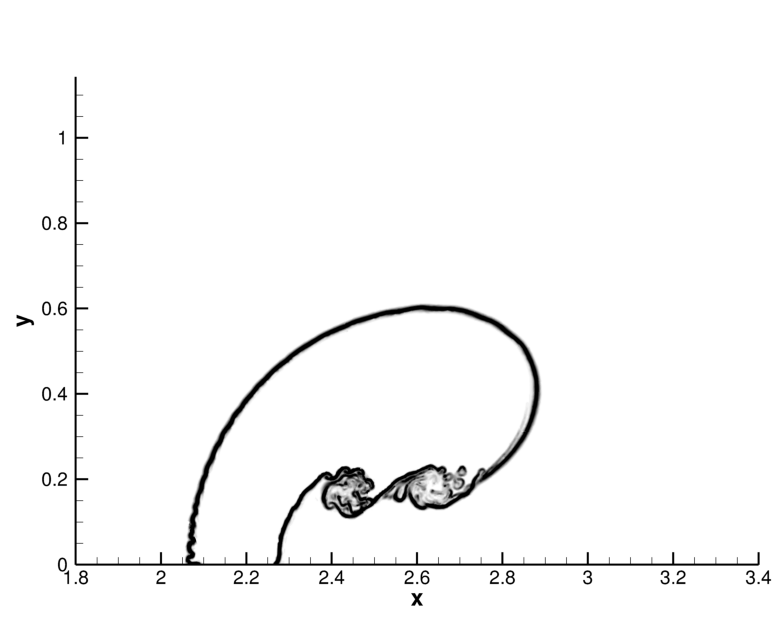,width=3.8cm}}\\
\subfloat[$t=62\;\mu$s]{\epsfig{figure=\LOCALFIGPATHSBIKT 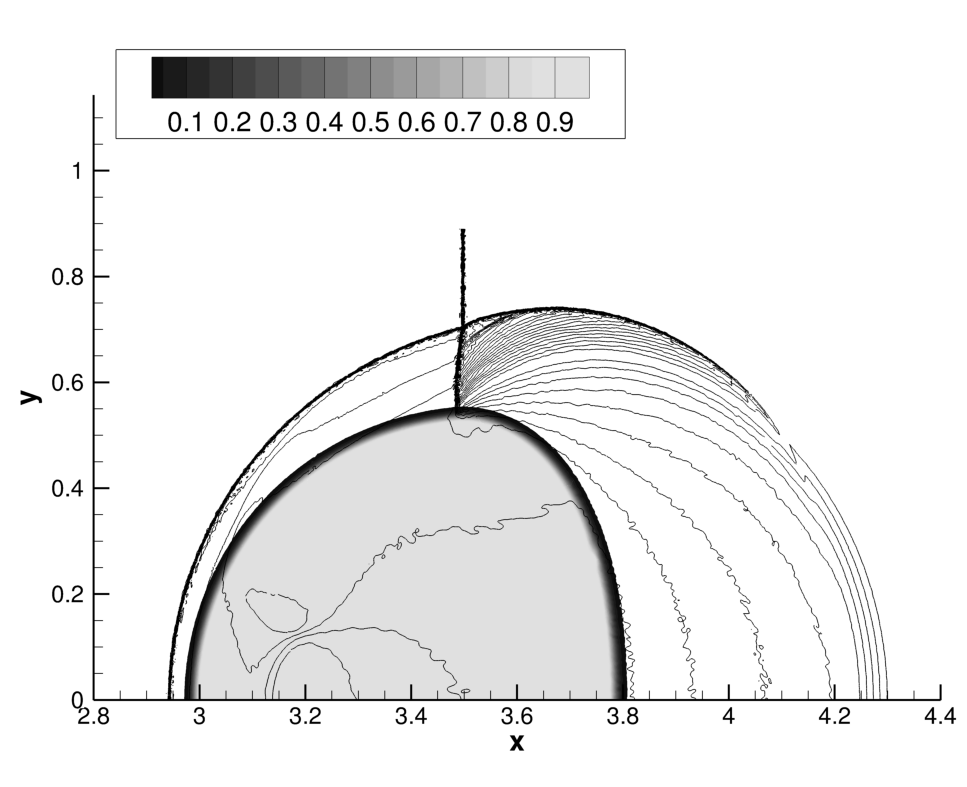,width=3.8cm}}
\subfloat{\epsfig{figure=\LOCALFIGPATHSBIKT 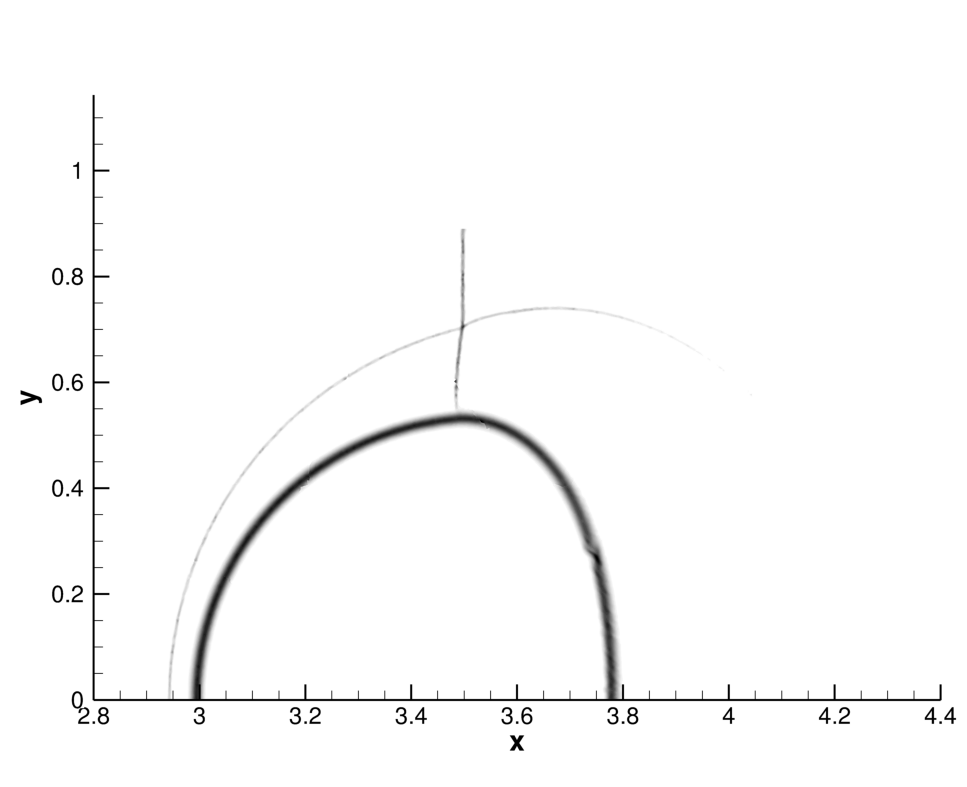,width=3.8cm}}
\subfloat[$t=674\;\mu$s]{\epsfig{figure=\LOCALFIGPATHSBIKT 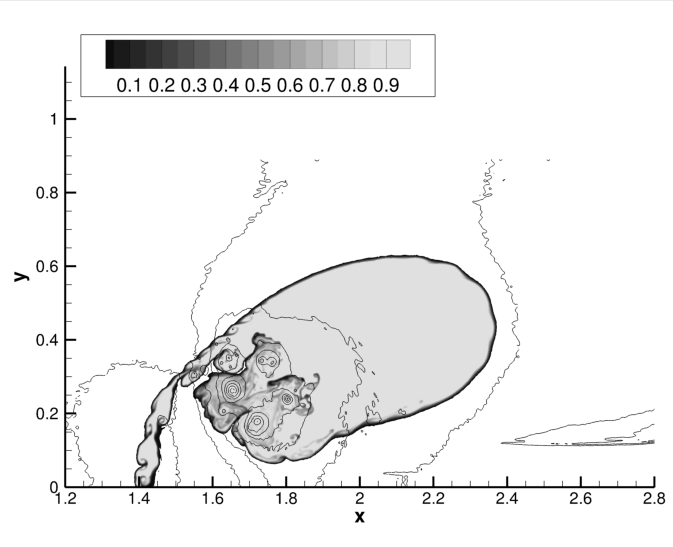,width=3.8cm}}
\subfloat{\epsfig{figure=\LOCALFIGPATHSBIKT 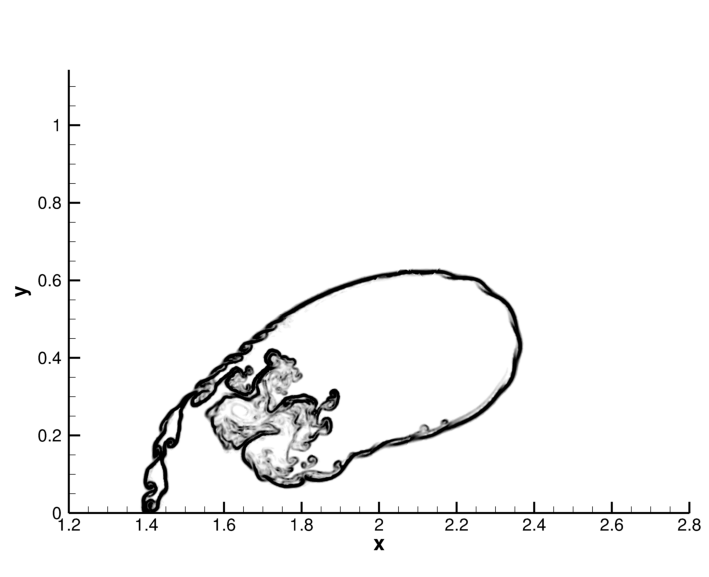,width=3.8cm}}\\
\subfloat[$t=102\;\mu$s]{\epsfig{figure=\LOCALFIGPATHSBIKT 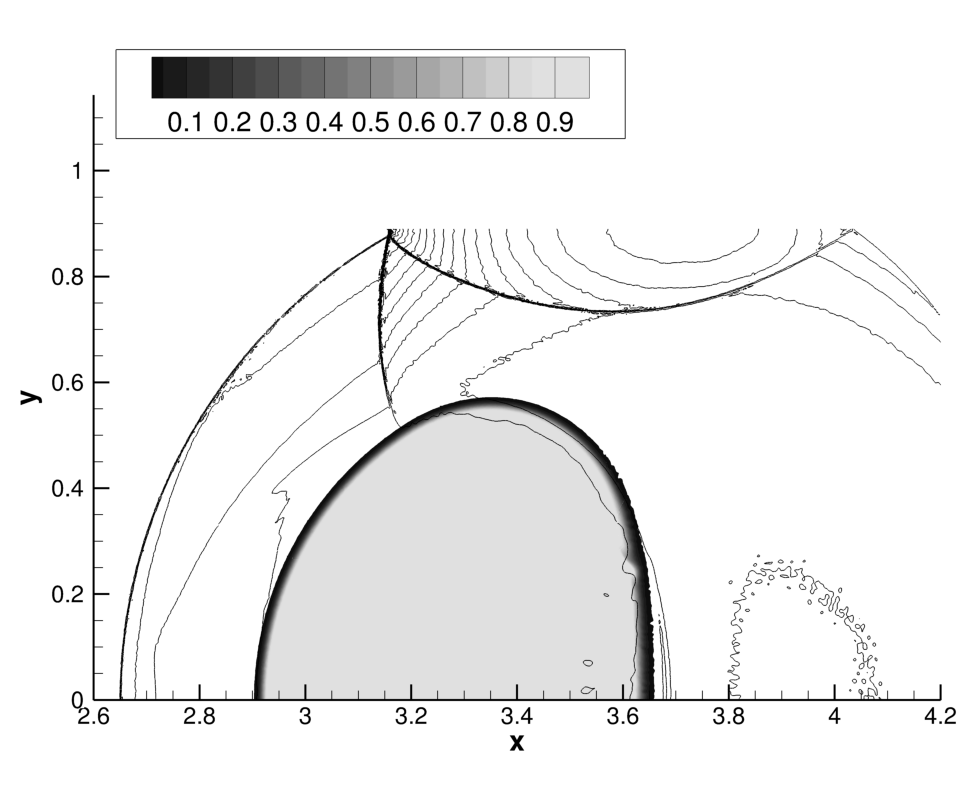,width=3.8cm}}
\subfloat{\epsfig{figure=\LOCALFIGPATHSBIKT 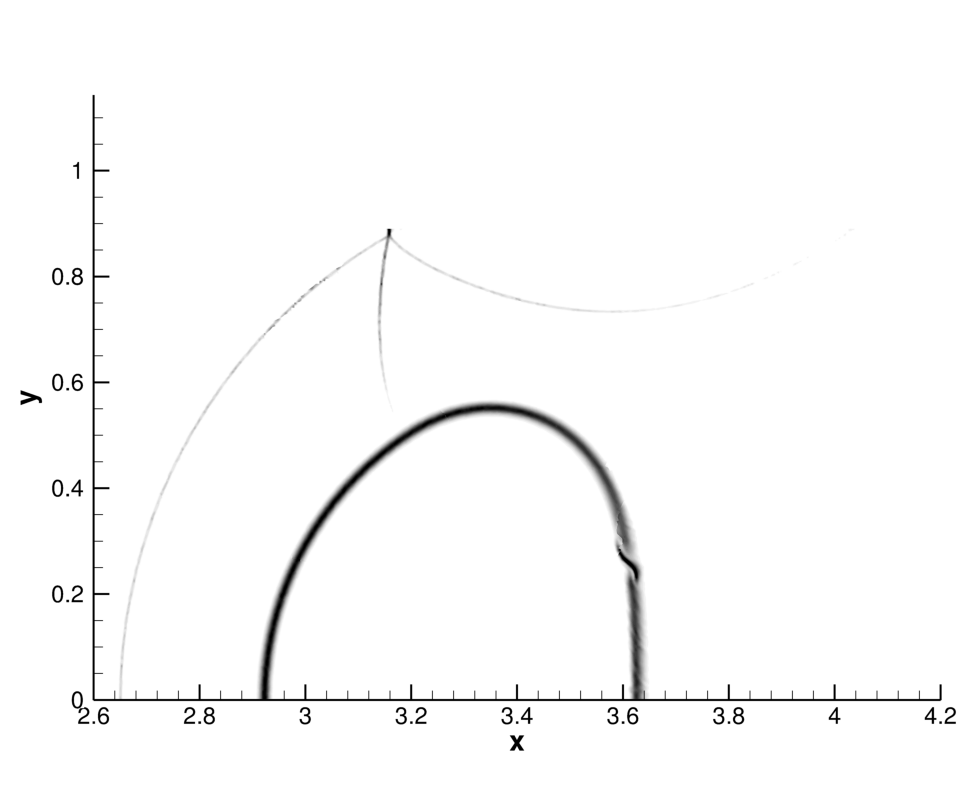,width=3.8cm}}
\subfloat[$t=983\;\mu$s]{\epsfig{figure=\LOCALFIGPATHSBIKT 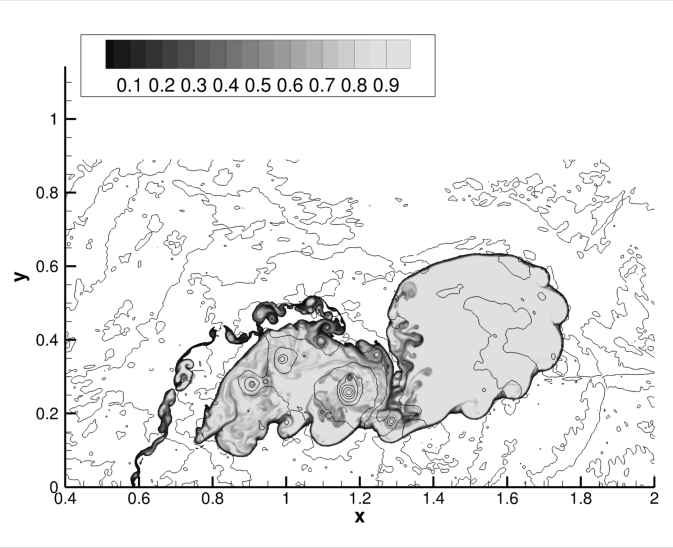,width=3.8cm}}
\subfloat{\epsfig{figure=\LOCALFIGPATHSBIKT 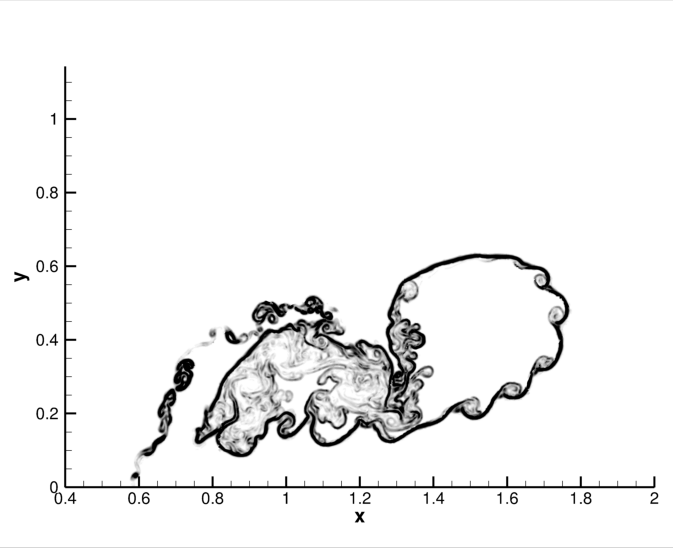,width=3.8cm}}
\caption{Shock wave-helium bubble interaction: $70$ pressure contours from $0.66$ to $1.29$ (lines) and $20$ void fraction contours (colors, see legend), and Schlieren $|\nabla\rho|/\rho$ obtained with $p=3$ and $N=238,673$ cells at different physical times.}
\label{fig:solution_SBI}
\end{bigcenter}
\end{figure}

\subsection{Richtmyer-Meshkov instability}\label{sec:RMI}

We also simulate the interaction of a Mach $1.21$ shock wave in a mixture of air and acetone vapor with a  perturbed interface separating the mixture from a dense SF$6$ gas \cite{brouillette_sturtevant_94}. The complete setup may be found in \cite{houim_kuo_11,mohaved_johnsen_13,latini_etal_07,capuano_etal_18}. The thermodynamical parameters of the mixture and SF6 are $\gamma_1=1.24815$, $C_{v_1}=3.2286$ and $\gamma_2=1.0984$, $C_{v_2}=$2.0019, respectively. The Atwood number of the initial state is $At=\tfrac{\rho_2-\rho_1}{\rho_1+\rho_2}=0.6053$ where $\rho_2=\gamma_2$ is taken as the pre-shock density of the mixture. Data are made nondimensional with a length scale of $1$cm, and the pre-shock pressure, temperature and sound speed of the mixture. The size of the domain is $\Omega=[0,80.1]\times[0,5.9]$, periodicity conditions are set to the top and bottom boundaries, while non reflecting and reflecting conditions are applied to the left and right boundaries, respectively. We use a Cartesian grid with $1601\times118$ elements which corresponds to $118$ elements per perturbation wavelength and constitutes a coarse mesh compared to other experiments \cite{houim_kuo_11,mohaved_johnsen_13,capuano_etal_18}.

We consider the single-mode perturbation of the material interface \cite{brouillette_sturtevant_94}. The shock travels to the right and interacts with the interface, then reflects at the right boundary and interacts a second time with the interface (re-shock regime). \Cref{fig:solution_RMI} shows results before and after re-shock where the density and vorticity contours are displayed. The first interaction produces vorticity at the interface and the formation and roll-up of spikes, while the second interaction with the reflected shock wave produces complex fine flow field structures and a low Mach number flow field. Again, we observe a good resolution of the shock and material interface and the associated vortical structures.

\begin{figure}
\begin{bigcenter}
\subfloat{\epsfig{figure=\LOCALFIGPATHRMI 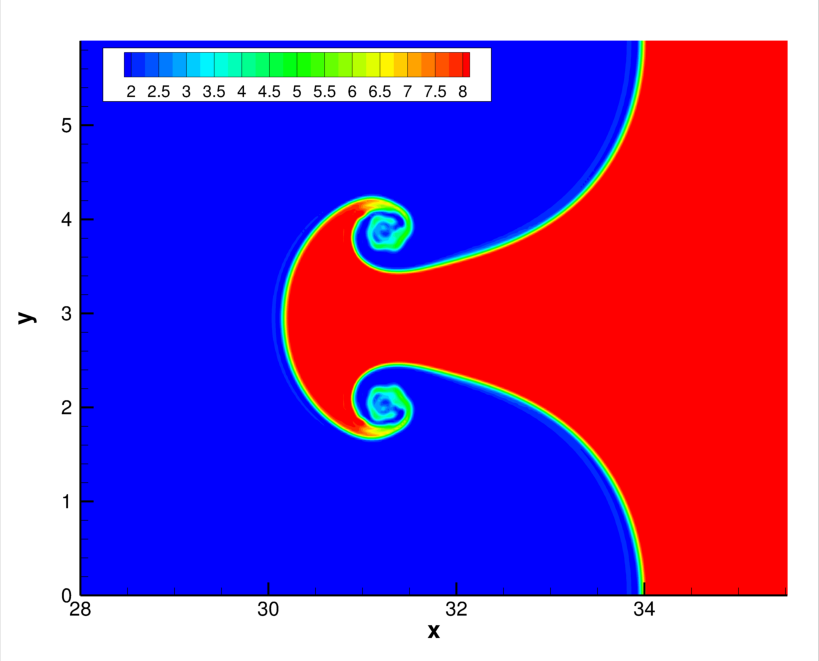,width=3.8cm}}
\subfloat{\epsfig{figure=\LOCALFIGPATHRMI 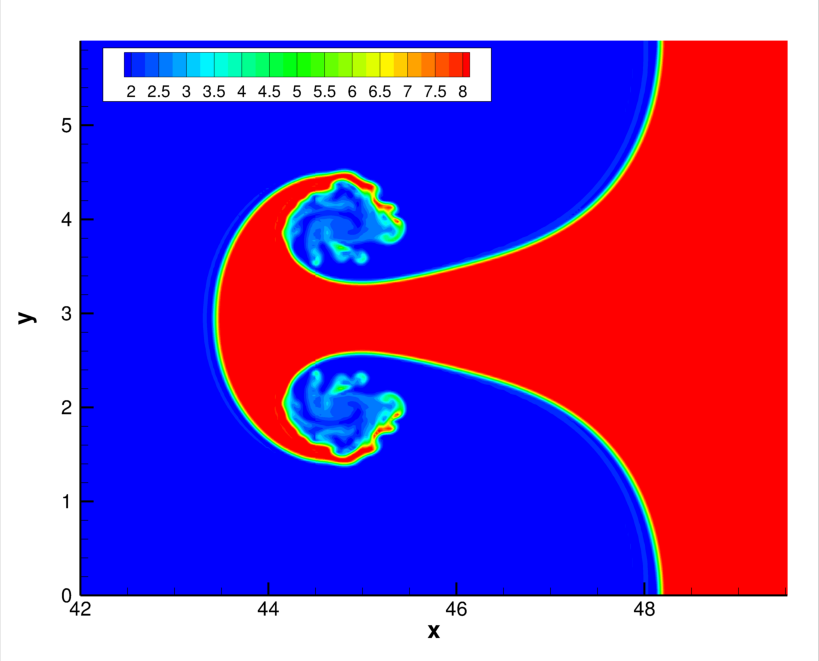,width=3.8cm}}
\subfloat{\epsfig{figure=\LOCALFIGPATHRMI 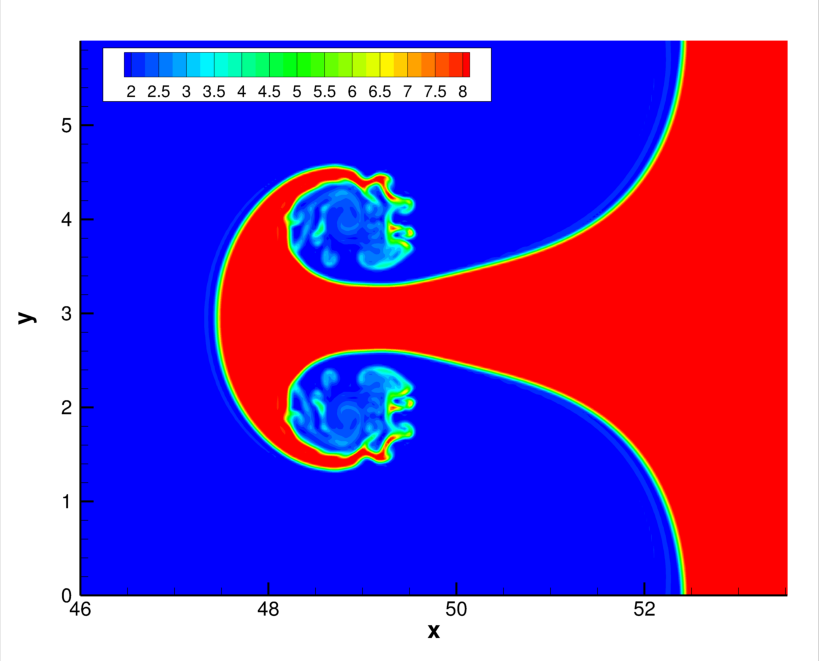,width=3.8cm}}
\subfloat{\epsfig{figure=\LOCALFIGPATHRMI 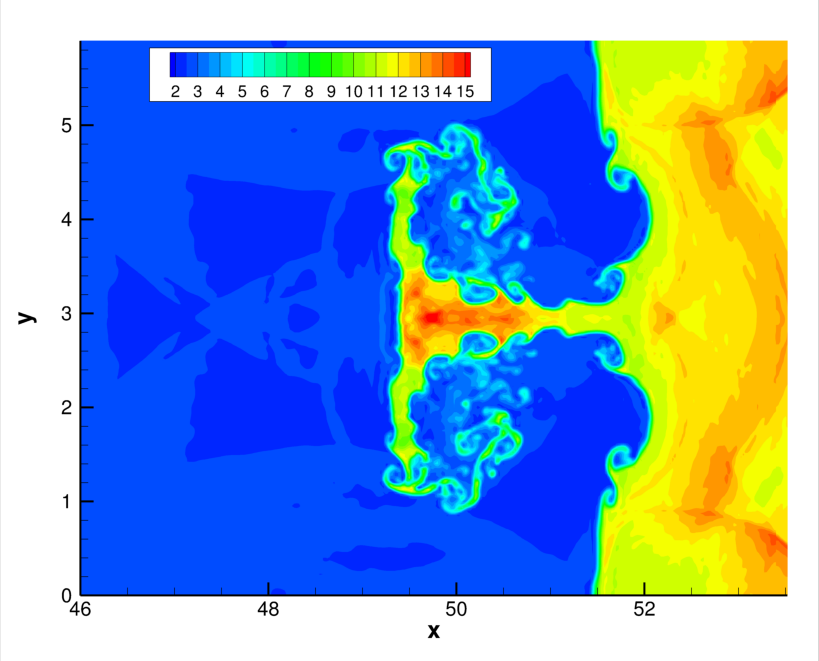,width=3.8cm}} \\
\setcounter{subfigure}{0}
\subfloat[$t=4\mu s$]{\epsfig{figure=\LOCALFIGPATHRMI 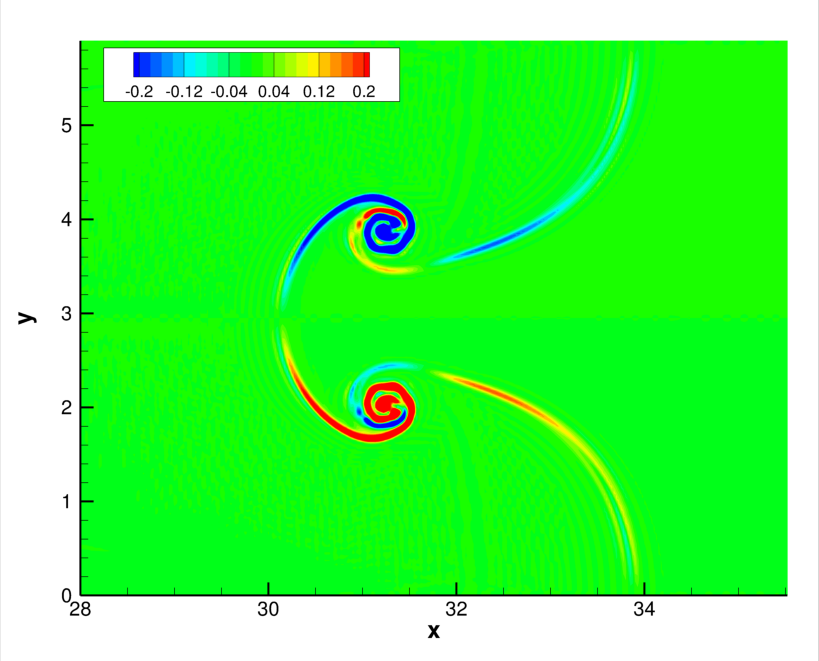,width=3.8cm}}
\subfloat[$t=6\mu s$]{\epsfig{figure=\LOCALFIGPATHRMI 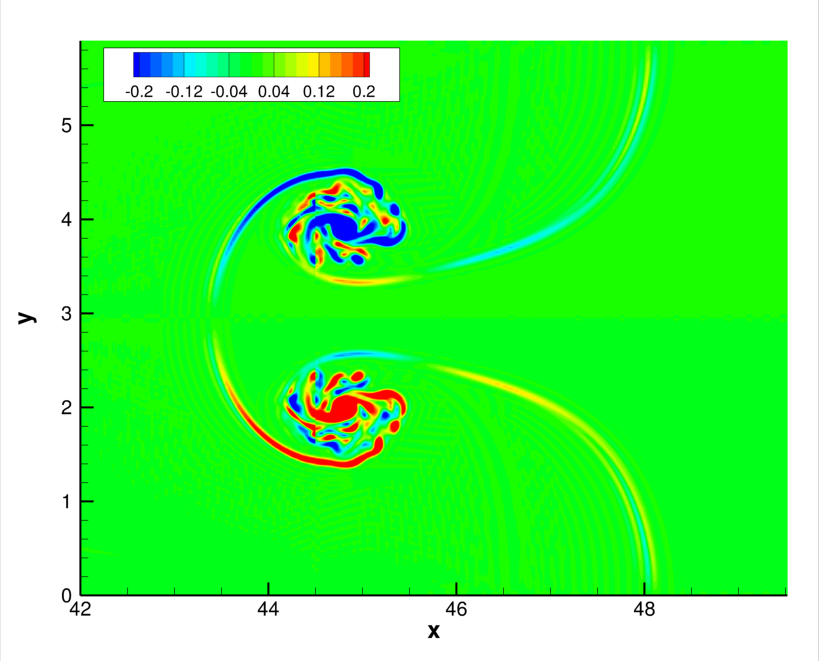,width=3.8cm}}
\subfloat[$t=6.6\mu s$]{\epsfig{figure=\LOCALFIGPATHRMI 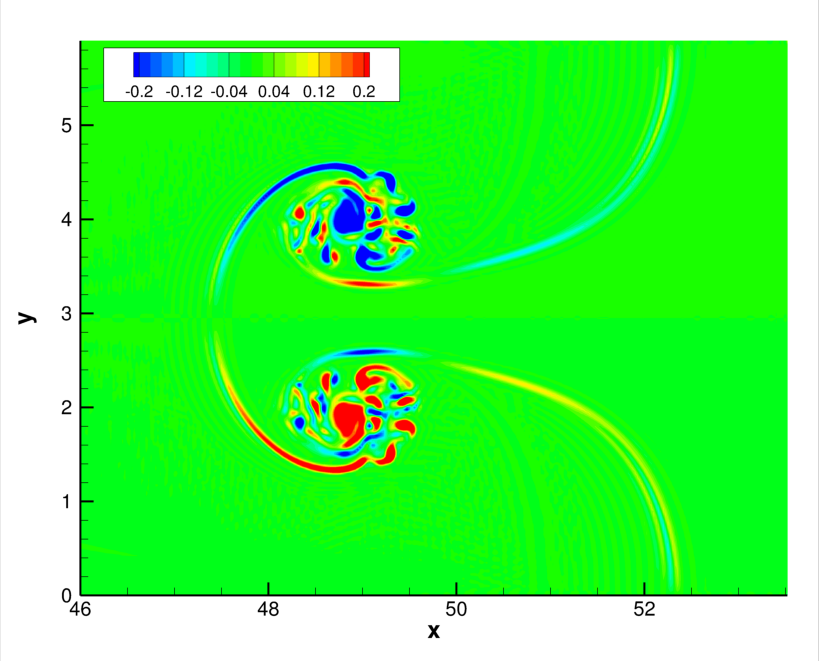,width=3.8cm}}
\subfloat[$t=7\mu s$]{\epsfig{figure=\LOCALFIGPATHRMI 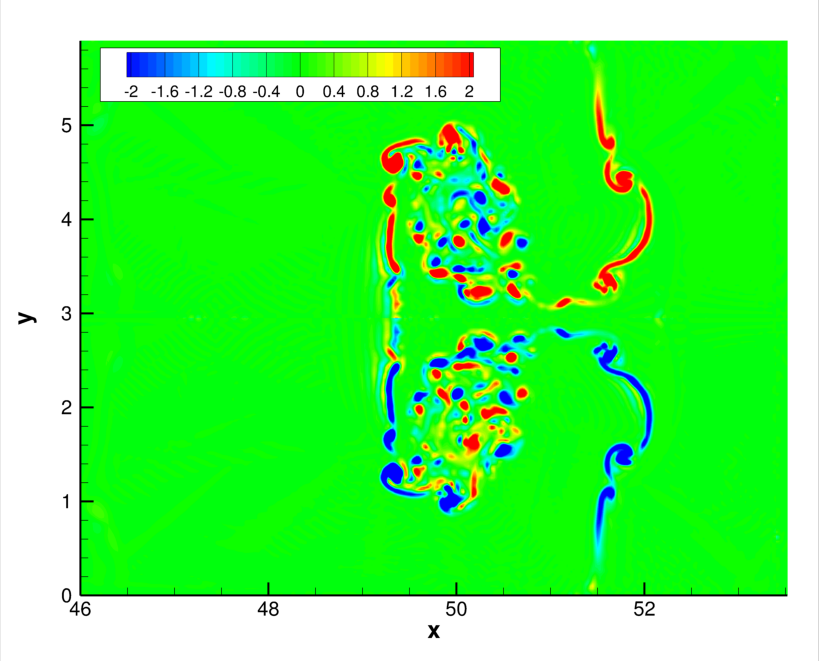,width=3.8cm}}
\caption{Richtmyer-Meshkov instability: $25$ density $\rho$ (top) and $21$ vorticity $\nabla\times{\bf v}$ (bottom) contours before (a-c) and after (d) re-shock obtained with a polynomial degree $p=3$ and a $h=1/20$ mesh at different physical times.}
\label{fig:solution_RMI}
\end{bigcenter}
\end{figure}

\subsection{Strong shock wave-hydrogen bubble interaction}\label{sec:SBI}

We finally consider an interaction problem of a planar shock in air with a circular hydrogen bubble that has been numerically investigated in \cite{SJOGREEN_yee_sbi_03,DON_sbi_95,billet_etal_08}. Compared to the test case in \cref{sec:SBI_KT}, the shock is stronger with a Mach number $M=2$ which results in faster shock and bubble dynamics. The bubble has a diameter of $5.6$ and is centered at $x=4$ and $y=0$ (see \cref{fig:solution_strong_SBI}), it is filled with hydrogen with $\gamma_1=1.41$ and $C_{v_1}=7.424$ and is surrounded by air with $\gamma_2=1.353$, $C_{v_2}=0.523$. The shock is initially set at $x=7$ with pre-shock conditions $\T=1000$K, $\p=1$atm, and $u=1240$m$/$s. Data are made nondimensional with pre-shock density, velocity and temperature, and $1$mm as length scale. The domain extends to $\Omega=[0,22.5]\times[0,7.5]$ and we use a coarse unstructured mesh with $N=74,504$ elements. Symmetry conditions are set to the top and bottom boundaries, while supersonic inflow is imposed at the left boundary and non reflecting conditions are applied at the right boundary. In \cref{fig:solution_SBI} we plot contours of pressure, void fraction and numerical Schlieren obtained at different times. Results compare well with other numerical experiments \cite{SJOGREEN_yee_sbi_03,DON_sbi_95,billet_etal_08} and highlight robustness and accuracy of the present method on unstructured meshes.

\begin{figure}
\begin{bigcenter}
\captionsetup[subfigure]{labelformat=empty}
\subfloat[$t=0\;\mu$s]{\epsfig{figure=\LOCALFIGPATHSBINS 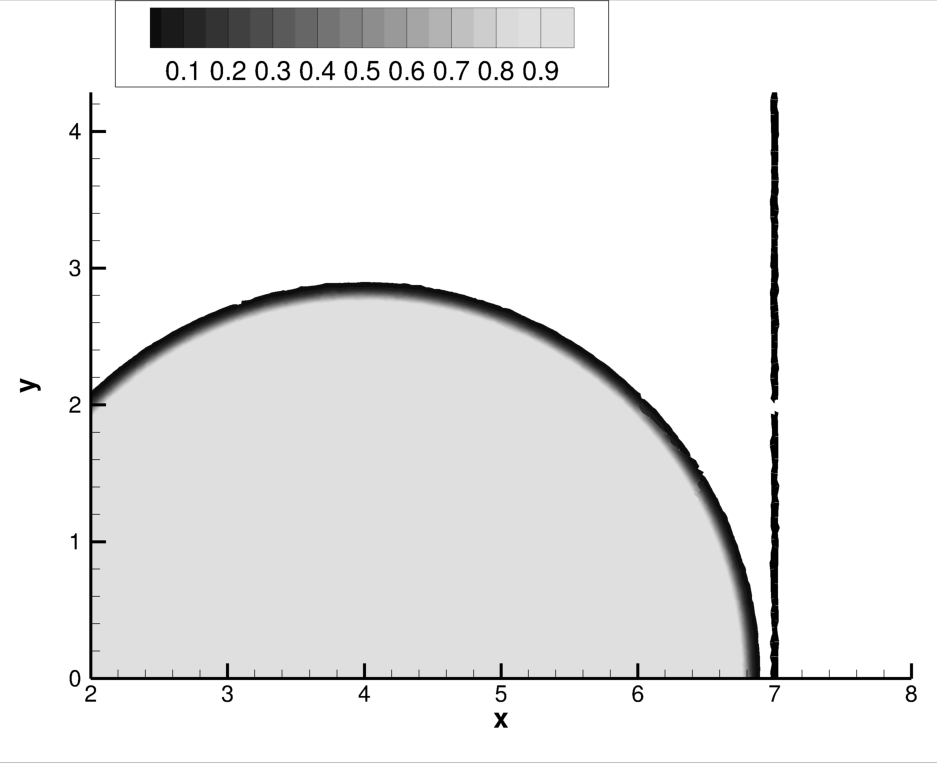,width=3.8cm}}
\subfloat             {\epsfig{figure=\LOCALFIGPATHSBINS 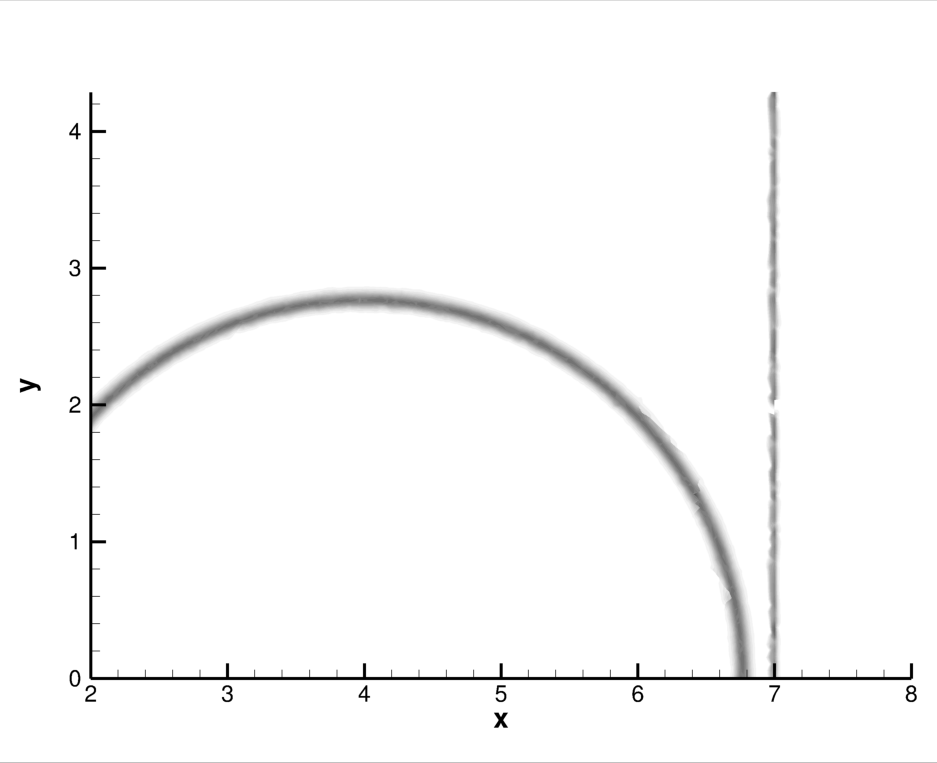,width=3.8cm}}
\subfloat[$t=1.5\;\mu$s]{\epsfig{figure=\LOCALFIGPATHSBINS 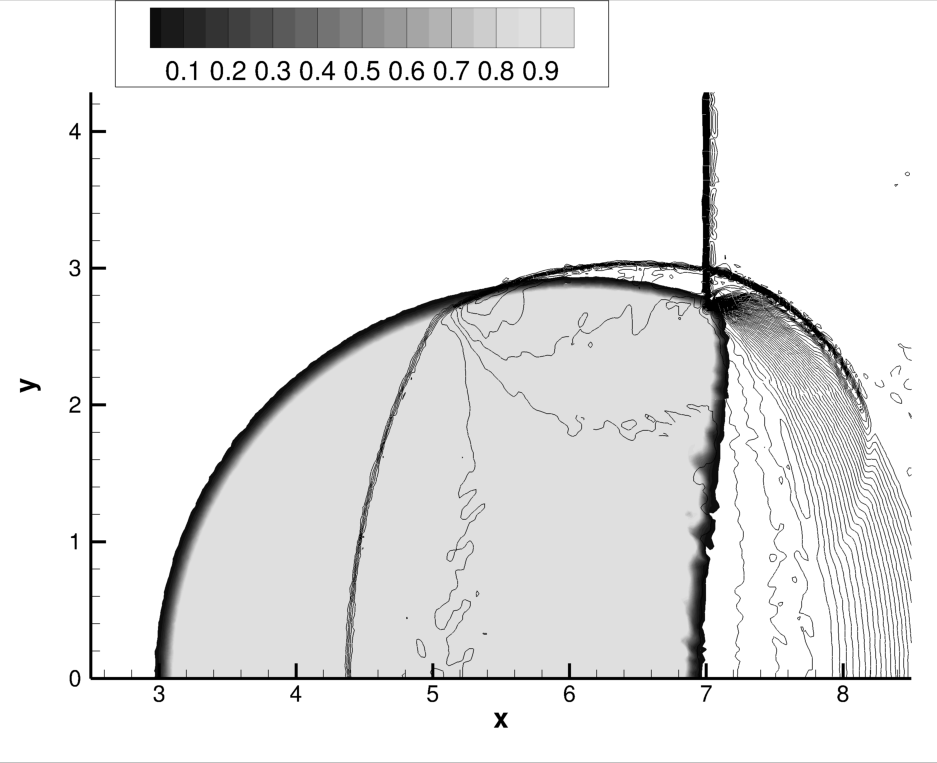,width=3.8cm}}
\subfloat               {\epsfig{figure=\LOCALFIGPATHSBINS 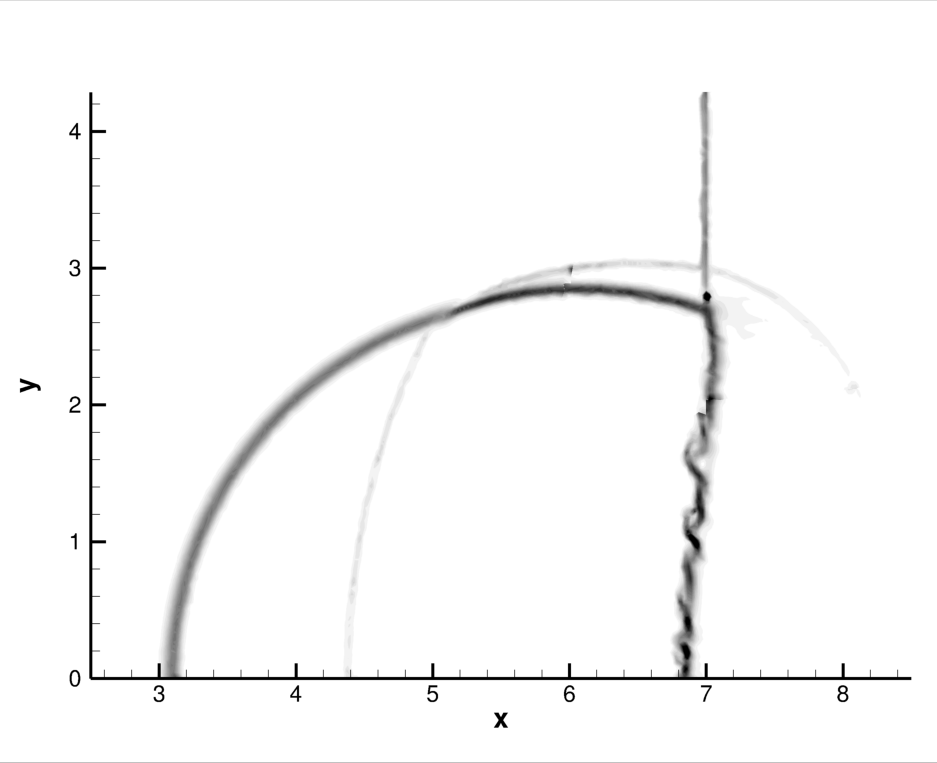,width=3.8cm}}\\
\subfloat[$t=2\;\mu$s]{\epsfig{figure=\LOCALFIGPATHSBINS 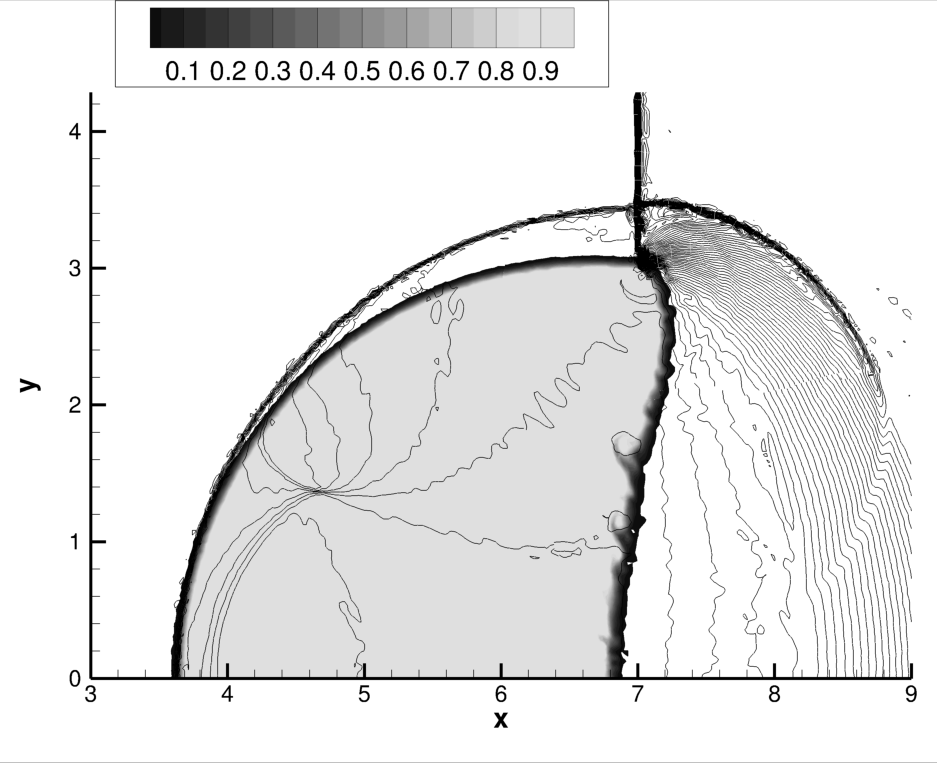,width=3.8cm}}
\subfloat             {\epsfig{figure=\LOCALFIGPATHSBINS 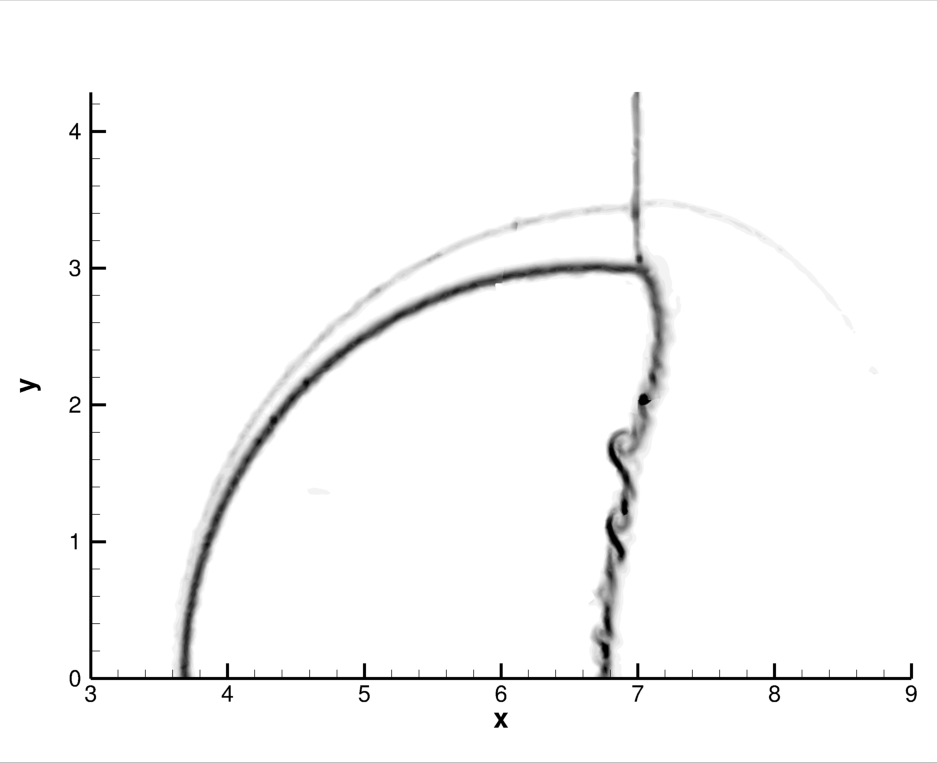,width=3.8cm}}
\subfloat[$t=2.5\;\mu$s]{\epsfig{figure=\LOCALFIGPATHSBINS 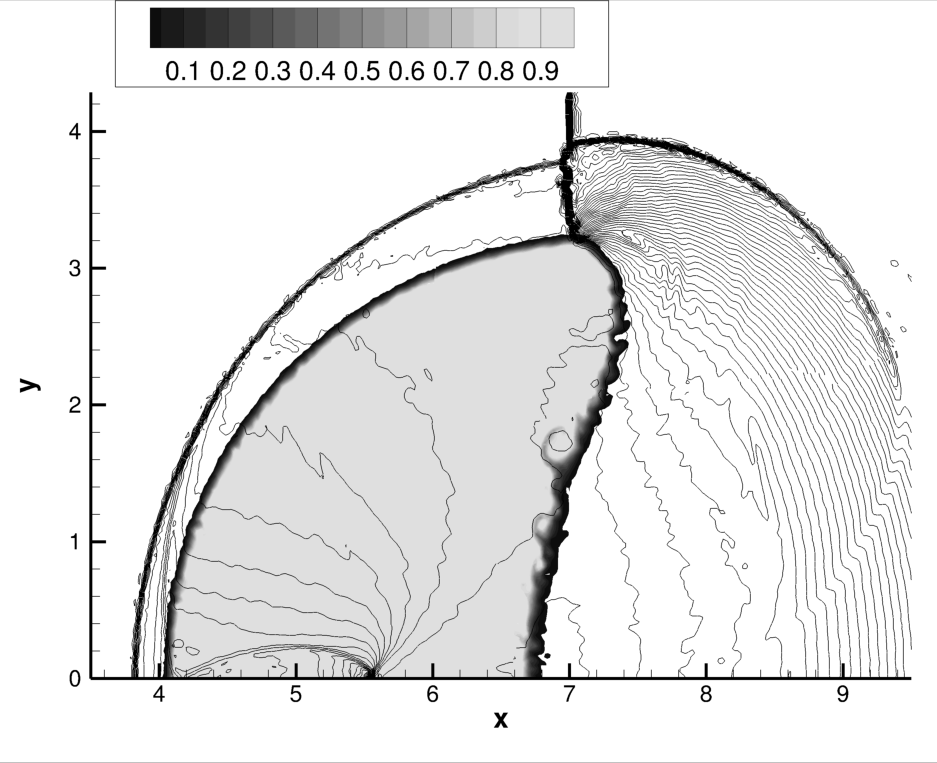,width=3.8cm}}
\subfloat               {\epsfig{figure=\LOCALFIGPATHSBINS 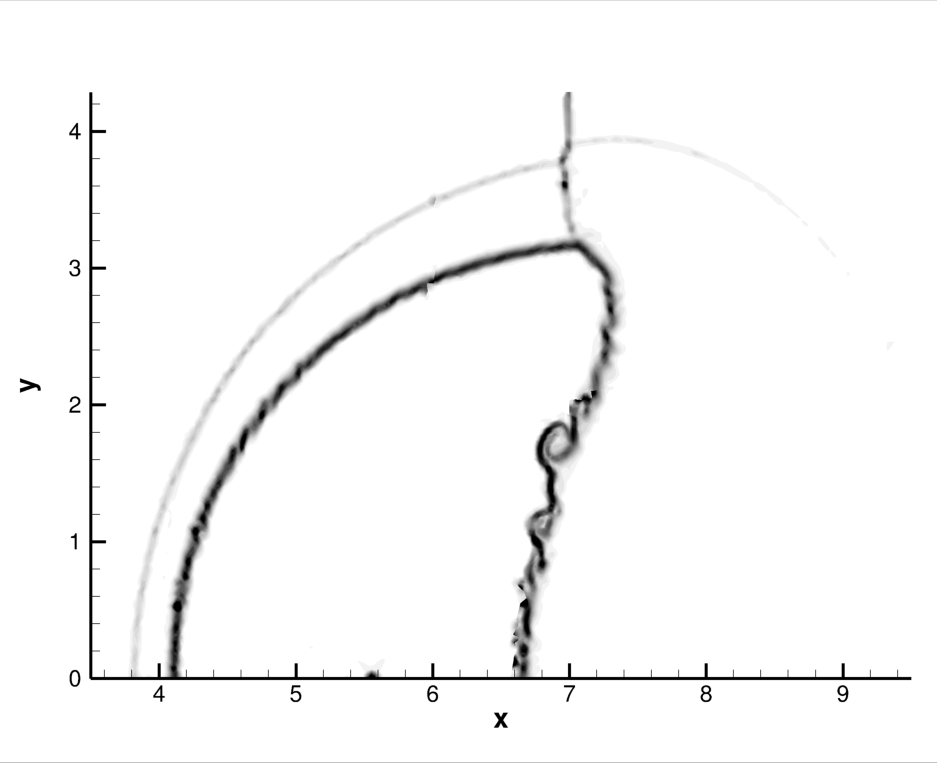,width=3.8cm}}\\
\subfloat[$t=3\;\mu$s]{\epsfig{figure=\LOCALFIGPATHSBINS 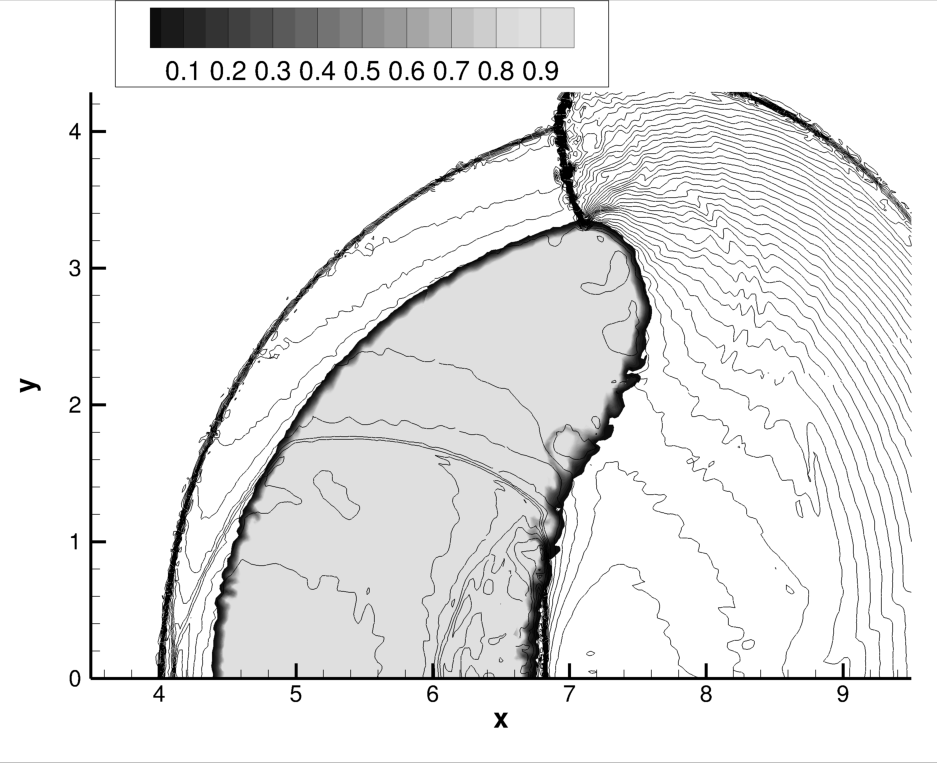,width=3.8cm}}
\subfloat             {\epsfig{figure=\LOCALFIGPATHSBINS 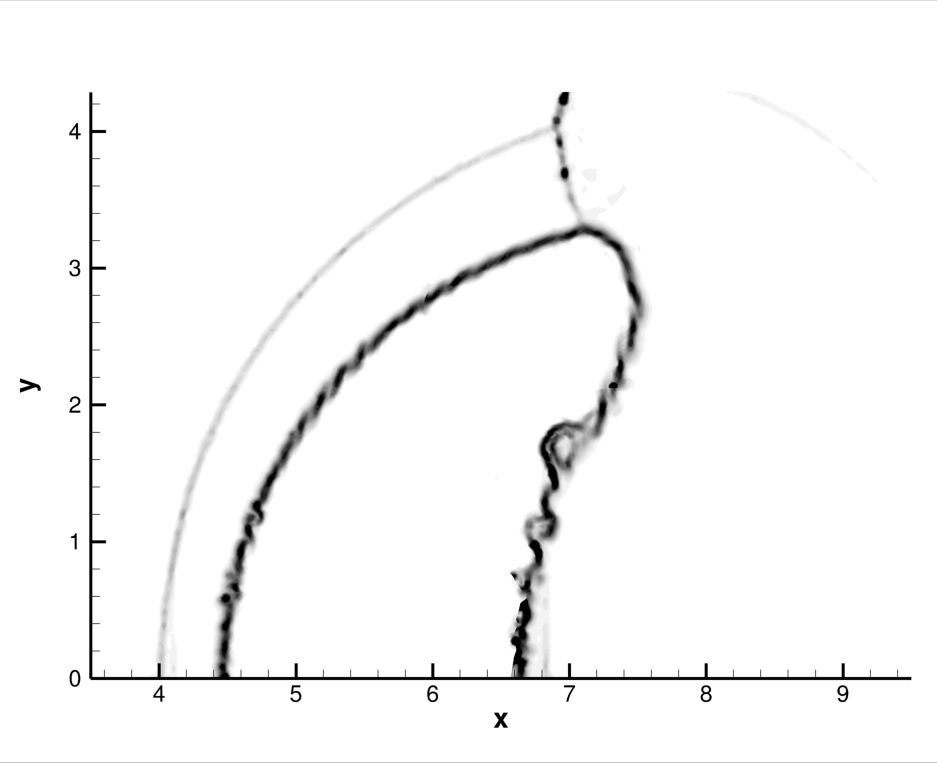,width=3.8cm}}
%
\subfloat[$t=4\;\mu$s]{\epsfig{figure=\LOCALFIGPATHSBINS 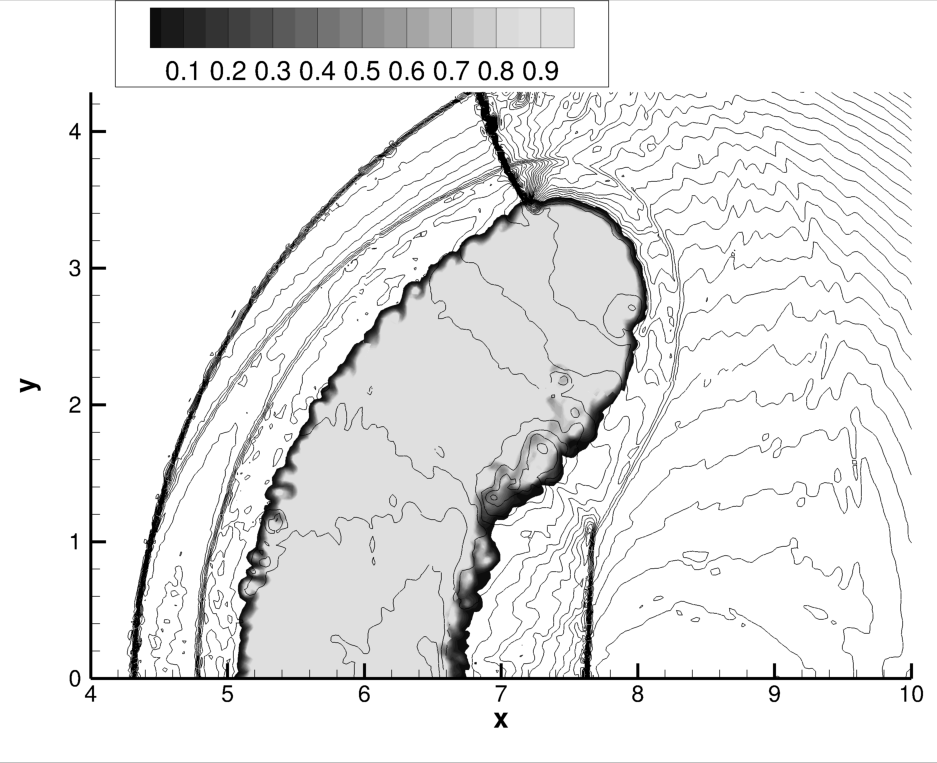,width=3.8cm}}
\subfloat             {\epsfig{figure=\LOCALFIGPATHSBINS 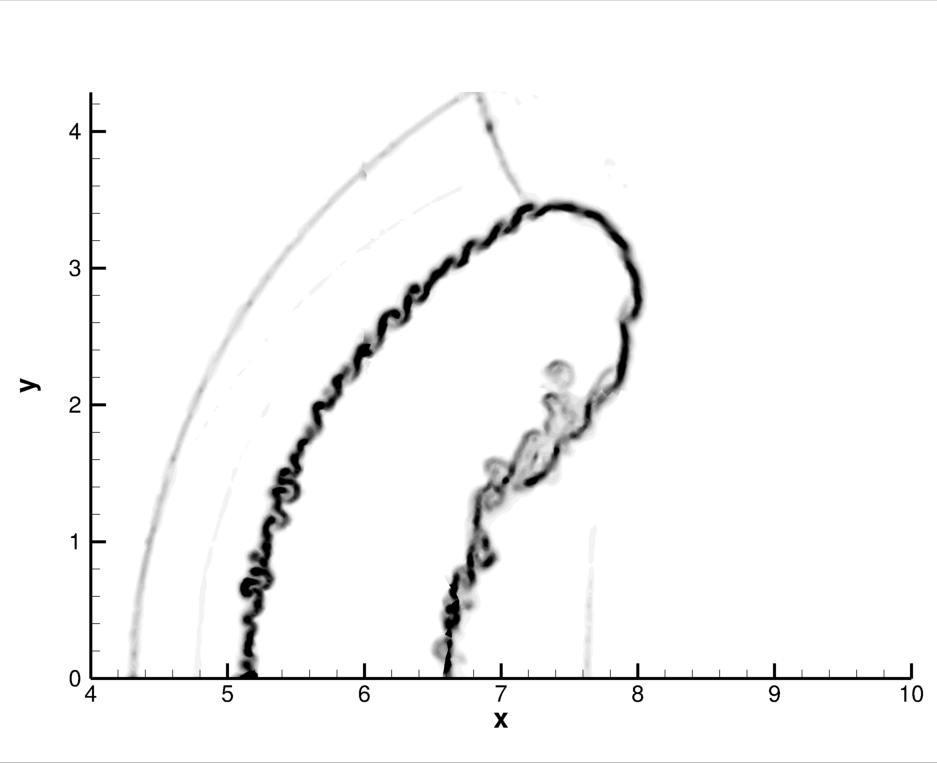,width=3.8cm}}\\
%
\subfloat[$t=8.8\;\mu$s]{\epsfig{figure=\LOCALFIGPATHSBINS 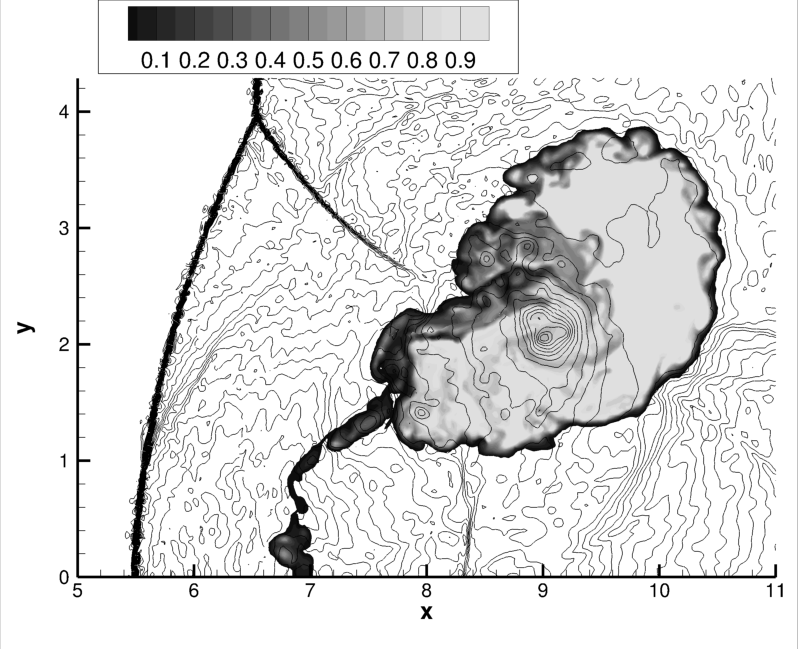,width=3.8cm}}
\subfloat               {\epsfig{figure=\LOCALFIGPATHSBINS 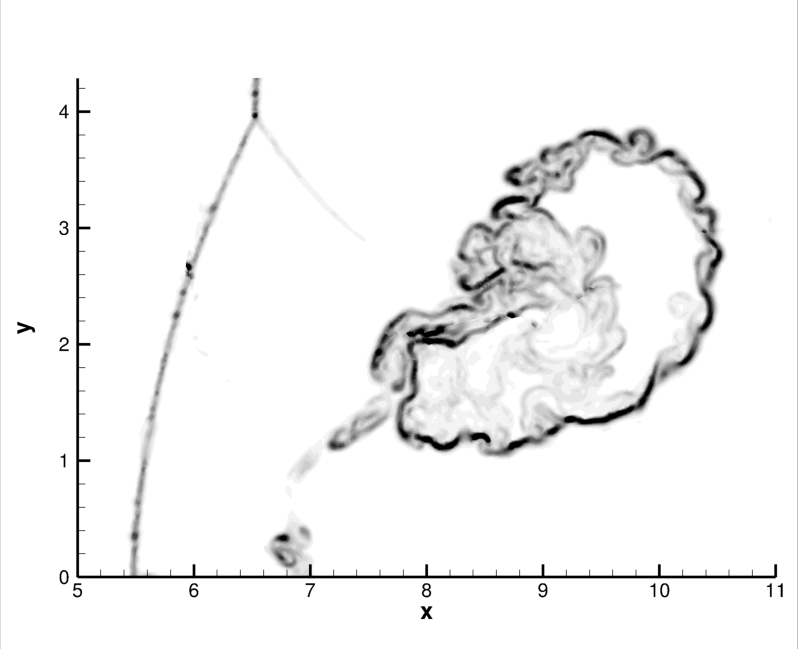,width=3.8cm}}
\subfloat[$t=13.6\;\mu$s]{\epsfig{figure=\LOCALFIGPATHSBINS 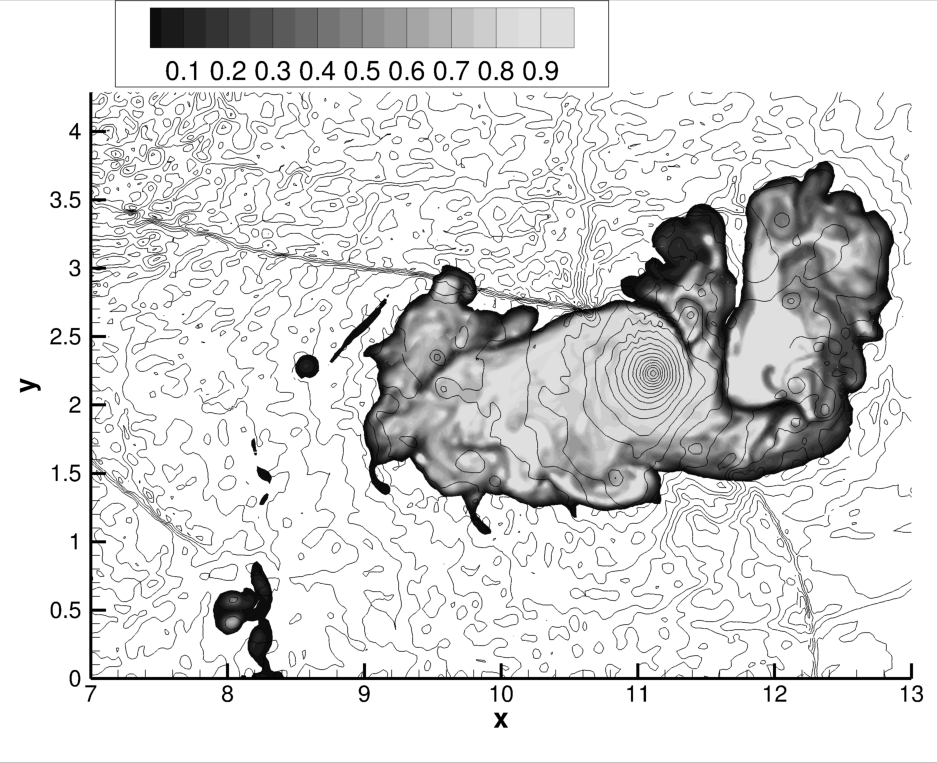,width=3.8cm}}
\subfloat                {\epsfig{figure=\LOCALFIGPATHSBINS 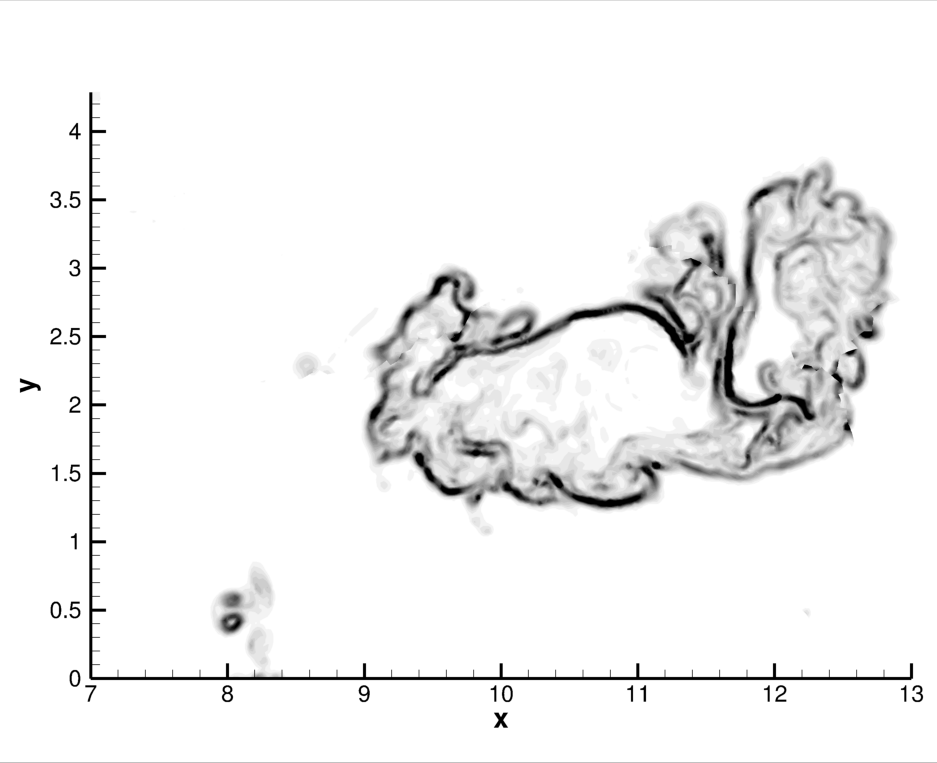,width=3.8cm}}
\caption{Strong shock wave-hydrogen bubble interaction: $68$ pressure contours from $0.06$ to $1.4$ (lines) and $20$ void fraction contours (colors, see legend), and Schlieren $|\nabla\rho|/\rho$ obtained with $p=3$ and $N=74,504$ cells at different physical times.}
\label{fig:solution_strong_SBI}
\end{bigcenter}
\end{figure}

%
%
\section{Concluding remarks}\label{sec:conclusions}

A high-order, ES and robust scheme is introduced in this work for the discretization of a multicomponent compressible Euler system on unstructured meshes in multiple space dimensions. The space discretization relies on the ES DGSEM framework \cite{fisher_carpenter_13,gassner_13,chen_shu_17} based on the modification of the integral over discretization elements where we replace the physical fluxes by EC numerical fluxes \cite{tadmor87} and on the use of ES numerical fluxes at element interfaces.

We first design two-point EC and ES fluxes for the multicomponent flow model. The latter is derived from the pressure relaxation scheme for the compressible Euler equations \cite{bouchut_04} and the energy relaxation approximation from \cite{coquel_perthame_98,renac_etal_ES_noneq_21} to allow the use of a simple polytropic equation of states for the mixture in the numerical approximation. This numerical flux provides an ES and robust three-point scheme which is then used as a building block for the design of the DGSEM. 

We then derive conditions on the numerical parameters and the time step to guaranty positivity of the density, internal energy, and void fractions of the cell-averaged solution when using a forward Euler discretization in time. We use a posteriori limiters from \cite{zhang2010positivity} to restore positivity of all DOFs within cells. The scheme is also proved to exactly resolve stationary contact waves. An explicit Runge-Kutta scheme \cite{spiteri_ruuth02} is used for the high-order time integration.

We perform high-order accurate numerical simulations of flows in one and two space dimensions with discontinuous solutions and complex wave interactions. The results highlight the accurate resolution of material interfaces, shock and contact waves, their interactions and associated small scale features. Likewise, robustness and nonlinear stability of the scheme are confirmed. Future investigations will focus on the \red{suppression} of spurious oscillations at moving material interfaces \red{by considering the discretization of alternative models \cite{abgrall_96,karni_mutlicomp_96,ALLAIRE2002577} while keeping the same properties as the present scheme} and on the extension of this approach to stiffened gas equations of states to account for mixture with liquid components.


%
%
\appendix

%
%
\section{Proof of \cref{th:positive-2D-FVO2-scheme}}\label{sec:proof_positive-2D-FVO2-scheme}

We here prove \cref{th:positive-2D-FVO2-scheme} that has been first proved in \cite[Th.~4]{perthame_shu_96} for triangles. Note that the proof can be generalized to star-shaped polygonal elements. We first need the following result that also extends \cite[Th.~3]{perthame_shu_96} to quandrangles.

\begin{lemma}\label{th:positive-2D-scheme}
Let a three-point numerical scheme of the form \cref{eq:3pt-scheme-a} with a consistent \cref{eq:consistent_flux}, conservative \cref{eq:conservative_flux}, and Lipschitz continuous numerical flux ${\bf h}(\cdot,\cdot,\cdot)$ for the discretization of \cref{eq:HRM_PDEs-a} that satisfies positivity of the solution, ${\bf U}_{j\in\mathbb{Z}}^{n\geq0}$ in $\Omega^a$, under the CFL condition in \cref{eq:3pt-scheme-a}. Then, the numerical scheme \cref{eq:2D-FV-scheme} on quadrangular meshes with the same numerical flux is positive under the condition
\begin{equation}\label{eq:CFL-positive-2D-scheme}
  \Delta t \max_{\kappa\in\Omega_h}\frac{|\partial\kappa|}{|\kappa|}\max_{e\in\partial\kappa}|\lambda({\bf U}_{\kappa^\pm}^{n})| \leq \frac{1}{2}, \quad |\partial\kappa|:=\sum_{e\in\partial\kappa}|e|.
\end{equation}
\end{lemma}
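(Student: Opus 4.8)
The plan is to adapt the convex-combination argument of Perthame and Shu \cite{perthame_shu_96} to quadrangles: I will rewrite the cell update \cref{eq:2D-FV-scheme} as a convex combination of one-dimensional three-point updates of the form \cref{eq:3pt-scheme-a}, each of which is admissible by hypothesis, and then invoke the convexity of the admissible set $\Omega^a$ in \cref{eq:HRM-set-of-states}.

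First I would use the geometric closure identity $\sum_{e\in\partial\kappa}|e|{\bf n}_e={\bf 0}$, valid for any closed straight-sided polygon, together with the consistency \cref{eq:consistent_flux}, to subtract the vanishing term $\tfrac{\Delta t}{|\kappa|}\sum_{e\in\partial\kappa}|e|\,{\bf h}({\bf U}_\kappa^n,{\bf U}_\kappa^n,{\bf n}_e)=\tfrac{\Delta t}{|\kappa|}\,{\bf f}({\bf U}_\kappa^n)\!\cdot\!\sum_{e\in\partial\kappa}|e|{\bf n}_e$ from the right-hand side of \cref{eq:2D-FV-scheme}. Writing $\beta_e=|e|/|\partial\kappa|$, so that $\sum_{e\in\partial\kappa}\beta_e=1$, and distributing ${\bf U}_\kappa^n=\sum_{e}\beta_e{\bf U}_\kappa^n$, this recasts the update as
\begin{equation*}
 {\bf U}_\kappa^{n+1}=\sum_{e\in\partial\kappa}\beta_e\,{\bf U}_{\kappa,e}^{n+1},\qquad {\bf U}_{\kappa,e}^{n+1}:={\bf U}_\kappa^n-\tfrac{\Delta t|\partial\kappa|}{|\kappa|}\big({\bf h}({\bf U}_\kappa^n,{\bf U}_{\kappa_e^+}^n,{\bf n}_e)-{\bf h}({\bf U}_\kappa^n,{\bf U}_\kappa^n,{\bf n}_e)\big).
\end{equation*}

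Next I would identify each edge contribution ${\bf U}_{\kappa,e}^{n+1}$ with a single step of the one-dimensional three-point scheme \cref{eq:3pt-scheme-a} taken in the direction ${\bf n}_e$, applied to the three states $({\bf U}_\kappa^n,{\bf U}_\kappa^n,{\bf U}_{\kappa_e^+}^n)$, with the ratio $\Delta t/\Delta x$ in \cref{eq:3pt-scheme-a} replaced by $\Delta t|\partial\kappa|/|\kappa|$: the left interface flux collapses to ${\bf h}({\bf U}_\kappa^n,{\bf U}_\kappa^n,{\bf n}_e)$ and the right interface flux is ${\bf h}({\bf U}_\kappa^n,{\bf U}_{\kappa_e^+}^n,{\bf n}_e)$. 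The assumed positivity of the one-dimensional scheme then gives ${\bf U}_{\kappa,e}^{n+1}\in\Omega^a$ as soon as $\tfrac{\Delta t|\partial\kappa|}{|\kappa|}\max\big(|\lambda({\bf U}_\kappa^n)|,|\lambda({\bf U}_{\kappa_e^+}^n)|\big)\leq\tfrac12$; taking the maximum over all edges $e\in\partial\kappa$ and all cells $\kappa$ shows that this is precisely what is enforced by \cref{eq:CFL-positive-2D-scheme}.

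To conclude I would appeal to the convexity of $\Omega^a$: the positivity of the density is a half-space constraint, the positivity of the internal energy $\rho e=\rho E-\tfrac{|\rho{\bf v}|^2}{2\rho}$ cuts out a convex set since $\rho e$ is concave in the conserved variables, and the bounds $0\le Y_i\le1$ reduce to the linear constraints $\rho Y_i\ge0$ and $\sum_{i<n_c}\rho Y_i\le\rho$. Being a convex combination with weights $\beta_e$ of the admissible edge states ${\bf U}_{\kappa,e}^{n+1}$, the cell value ${\bf U}_\kappa^{n+1}$ therefore lies in $\Omega^a$. I expect the only delicate points to be the bookkeeping that matches the effective ratio $\Delta t|\partial\kappa|/|\kappa|$ to the one-dimensional CFL and the verification of the closure identity for straight-sided quadrangles; beyond allowing four rather than three edges in the sum, the argument of \cite{perthame_shu_96} carries over unchanged.
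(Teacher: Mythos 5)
Your proposal is correct and follows essentially the same route as the paper's own proof: both use the closure identity $\sum_{e\in\partial\kappa}|e|{\bf n}_e={\bf 0}$ together with consistency to subtract ${\bf h}({\bf U}_\kappa^n,{\bf U}_\kappa^n,{\bf n}_e)$, then rewrite the update as a convex combination with weights $|e|/|\partial\kappa|$ of one-dimensional three-point updates on the states $({\bf U}_\kappa^n,{\bf U}_\kappa^n,{\bf U}_{\kappa_e^+}^n)$ with effective ratio $\Delta t|\partial\kappa|/|\kappa|$, and conclude by the assumed positivity of the three-point scheme under the CFL bound. Your additional remarks spelling out the convexity of $\Omega^a$ make explicit what the paper leaves implicit, but the argument is the same.
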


\begin{proof}
Using $\sum_{e\in\partial\kappa}|e|{\bf n}_e=0$, we rewrite \cref{eq:2D-FV-scheme} under the form
\begin{eqnarray*}
  {\bf U}_\kappa^{n+1} &=& {\bf U}_\kappa^{n} - \tfrac{\Delta t}{|\kappa|}\sum_{e\in\partial\kappa} |e|\big({\bf h}({\bf U}_\kappa^{n},{\bf U}_{\kappa_e^+}^{n},{\bf n}_{e})-{\bf f}({\bf U}_\kappa^{n})\cdot{\bf n}_{e}\big) \\
  &\overset{\cref{eq:consistent_flux}}{=}& {\bf U}_\kappa^{n} - \tfrac{\Delta t}{|\kappa|}\sum_{e\in\partial\kappa} |e|\big({\bf h}({\bf U}_\kappa^{n},{\bf U}_{\kappa_e^+}^{n},{\bf n}_{e})-{\bf h}({\bf U}_\kappa^{n},{\bf U}_{\kappa}^{n},{\bf n}_{e})\big) \\
  &\overset{\cref{eq:CFL-positive-2D-scheme}}{=}& \sum_{e\in\partial\kappa} \tfrac{|e|}{|\partial\kappa|}\Big({\bf U}_\kappa^{n} - \tfrac{\Delta t|\partial\kappa|}{|\kappa|}\big({\bf h}({\bf U}_\kappa^{n},{\bf U}_{\kappa_e^+}^{n},{\bf n}_{e})-{\bf h}({\bf U}_\kappa^{n},{\bf U}_{\kappa}^{n},{\bf n}_{e})\big) \Big),
\end{eqnarray*}

\noindent which is a convex combination of positive three-point schemes \cref{eq:3pt-scheme-a} under the condition \cref{eq:CFL-positive-2D-scheme}. \qed
\end{proof}

\begin{proof}[Proof of \cref{th:positive-2D-FVO2-scheme}]
We use the notations in Figure~\ref{fig:quad-with-subtriangles} and set ${\bf n}_{ef}=-{\bf n}_{fe}$ and $l_{ef}=l_{fe}$. By conservation \cref{eq:conservative_flux}, we have
\begin{equation*}
 \sum_{e\in\partial\kappa}\sum_{f\in\partial\kappa_e\backslash\{e\}}l_{ef}{\bf h}({\bf U}_{\kappa_e^-}^{n},{\bf U}_{\kappa_f^-}^{n},{\bf n}_{ef}) = 0.
\end{equation*}

Adding this quantity to \cref{eq:2D-FVO2-scheme}, we get
\begin{eqnarray*}
  {\bf U}_\kappa^{n+1} &\overset{\cref{eq:2D-FVO2-scheme}}{=}& \sum_{e\in\partial\kappa} \tfrac{|e|}{|\partial\kappa|}{\bf U}_{\kappa_e^-}^{n} - \tfrac{\Delta t}{|\kappa|}\sum_{e\in\partial\kappa}\Big( |e|{\bf h}({\bf U}_{\kappa_e^-}^{n},{\bf U}_{\kappa_e^+}^{n},{\bf n}_e) + \sum_{f\in\partial\kappa_e\backslash\{e\}}l_{ef}{\bf h}({\bf U}_{\kappa_e^-}^{n},{\bf U}_{\kappa_f^-}^{n},{\bf n}_{ef})\Big) \\
  &=& \sum_{e\in\partial\kappa} \tfrac{|e|}{|\partial\kappa|}\Big({\bf U}_{\kappa_e^-}^{n} - \tfrac{|\partial\kappa|}{|e|}\tfrac{\Delta t}{|\kappa|}\sum_{f\in\partial\kappa_e}l_{ef}{\bf h}({\bf U}_{\kappa_e^-}^{n},\hat{\bf U}_{\kappa}^{ef,n},{\bf n}_{ef})\Big),
\end{eqnarray*}
\noindent with the conventions $l_{ee}=|e|$, ${\bf n}_{ee}={\bf n}_{e}$, $\hat{\bf U}_{\kappa}^{ee,n}={\bf U}_{\kappa_e^+}^{n}$, and $\hat{\bf U}_{\kappa}^{ef,n}={\bf U}_{\kappa_f^-}^{n}$ for $f\neq e$. The scheme \cref{eq:2D-FVO2-scheme} is therefore a convex combination of positive schemes of the form \cref{eq:2D-FV-scheme} under \cref{eq:CFL-positive-2D-FVO2-scheme}. \qed
\end{proof}

%
%
\section{Another EC flux for the physical entropy}\label{sec:appendix_EC_flux}

We here derive another EC flux which is similar to the EC introduced in \cite{gouasmi_etal_20} and reads
\begin{equation}\label{eq:EC_flux_gouasmi}
 {\bf h}_{ec}({\bf u}^-,{\bf u}^+,{\bf n}) = \begin{pmatrix} h_{\rho Y_{1} }({\bf u}^-,{\bf u}^+,{\bf n}) \\ \vdots \\ h_{\rho Y_{n_c-1} }({\bf u}^-,{\bf u}^+,{\bf n}) \\ h_{\rho}({\bf u}^-,{\bf u}^+,{\bf n}) \\ h_{\rho}({\bf u}^-,{\bf u}^+,{\bf n})\ol{{\bf v}} + \tfrac{\ol{\mathrm{p}\theta}}{\ol{\theta}}{\bf n} \\  \displaystyle\sum_{i=1}^{n_c}\Big(\tfrac{C_{v_i}}{\widehat{\theta}}+\tfrac{{\bf v}^{-}\cdot{\bf v}^{+}}{2}\Big)h_{\rho Y_i}({\bf u}^-,{\bf u}^+,{\bf n}) + \tfrac{\ol{\mathrm{p}\theta}}{\ol{\theta}}\ol{{\bf v}}\cdot{\bf n} \end{pmatrix}, \; \begin{aligned}  &h_{\rho Y_i}({\bf u}^-,{\bf u}^+,{\bf n}) = \ol{\alpha_i}\widehat{\rho_i}\ol{{\bf v}}\cdot{\bf n}, \\ &h_{\rho}(\cdot,\cdot,\cdot) = \sum_{i=1}^{n_c}h_{\rho Y_i}(\cdot,\cdot,\cdot),\end{aligned}
\end{equation}

\noindent with $\theta=\tfrac{1}{\T}$. Symmetry \cref{eq:symmetric_flux} and consistency \cref{eq:consistent_flux} follow from symmetry and consistency of the logarithmic mean and average operator. Let now prove that \cref{eq:EC_flux_gouasmi} is EC \cref{eq:entropy_conserv_flux} by following the lines of \cite{gouasmi_etal_20}. We will use $\du\mathrm{s}_i\df \overset{\cref{eq:partial_entropy}}{=} -C_{v_i}\du\ln\theta\df - r_i\du\ln\rho_i\df$. Let expand
\begin{equation*}
\du\psib({\bf u})\cdot{\bf n}\df \overset{\cref{eq:def-potential}}{=} \du r({\bf Y})\rho{\bf v}\cdot{\bf n}\df \overset{\cref{eq:mixture_eos}}{=} \big(\ol{\mathrm{p}\theta}\du {\bf v}\df+\du\rho r({\bf Y})\df\ol{\bf v}\big)\cdot{\bf n} \blue{ \overset{\cref{eq:def-rho-rhoE-p}}{=}  \ol{\mathrm{p}\theta}\du {\bf v}\df\cdot{\bf n} + \sum_{i=1}^{n_c}\ol{\alpha_i}\du\rho r({\bf Y})\df\ol{\bf v}\cdot{\bf n} \overset{\cref{eq:partial_densities}}{=} \ol{\mathrm{p}\theta}\du {\bf v}\df\cdot{\bf n} + \sum_{i=1}^{n_c}\ol{\alpha_i}r_i\du\rho_i\df\ol{\bf v}\cdot{\bf n}. }
\end{equation*}

Using short notations for the flux components and the definition of $h_\rho$ in \cref{eq:EC_flux_gouasmi} we get
\begin{align*}
 \du \etab'({\bf u}) \df \cdot {\bf h}_{ec}({\bf u}^-,{\bf u}^+,{\bf n}) &= \sum_{i=1}^{n_c-1}\du (C_{v_i}-C_{v_{n_c}})\ln\theta+r_i\ln\rho_i-r_{n_c}\ln\rho_{n_c}\df h_{\rho Y_i} \\
 &+ \du C_{v_{n_c}}\ln\theta+r_{n_c}\ln\rho_{n_c}-\tfrac{{\bf v}\cdot{\bf v}}{2}\theta\df h_\rho + \du\theta{\bf v}\df\cdot{\bf h}_{\rho {\bf v}}-\du\theta\df h_{\rho E} \\
 &\overset{\cref{eq:EC_flux_gouasmi}}{=} \sum_{i=1}^{n_c} (C_{v_i}\du\ln\theta\df+r_i\du\ln\rho_i\df) h_{\rho Y_i} - \Big(\ol{\tfrac{{\bf v}\cdot{\bf v}}{2}}\du\theta\df+\ol{\theta}\ol{\bf v}\cdot\du{\bf v}\df\Big)h_{\rho} \\
 &+ (\du\theta\df\ol{\bf v}+\ol{\theta}\du{\bf v}\df)\cdot{\bf h}_{\rho {\bf v}}-\du\theta\df h_{\rho E}
\end{align*}

\noindent and $\du \etab'({\bf u}) \df \cdot {\bf h}_{ec}({\bf u}^-,{\bf u}^+,{\bf n}) - \du \psib({\bf u})\cdot{\bf n}\df$ becomes
\begin{equation*}
  \sum_{i=1}^{n_c}r_i\du\ln\rho_i\df\big(h_{\rho Y_i}-\ol{\alpha_i}\widehat{\rho_i}\ol{\bf v}\cdot{\bf n}\big) + \ol{\theta}\du{\bf v}\df\cdot\big({\bf h}_{\rho {\bf v}}-h_{\rho}\ol{\bf v}-\tfrac{\ol{\mathrm{p}\theta}}{\ol{\theta}}{\bf n}\big) + \du\ln\theta\df\big(\sum_{i=1}^{n_c}C_{v_i} h_{\rho Y_i} - \widehat{\theta}(h_{\rho E} - \ol{\bf v}\cdot{\bf h}_{\rho {\bf v}} + \ol{\tfrac{{\bf v}\cdot{\bf v}}{2}}h_{\rho})\big) 
\end{equation*}
\noindent which indeed vanishes from \cref{eq:EC_flux_gouasmi}.
%

%
%
\section{Pressure-based relaxation system and numerical flux for \cref{eq:relax-nrj-HRM-sys}}\label{sec:appendix_relax_sys_for_nrj_relax}

We here give details on the pressure-based relaxation system used to derive the ES and robust numerical flux in \cref{eq:3pt-scheme-relax} for the relaxation model in homogeneous form \cref{eq:relax-nrj-HRM-sys-homogeneous}. This numerical scheme is adapted from \cite[Prop.~2.21]{bouchut_04} and is based on a relaxation approximation using evolution equations for a relaxation pressure in place of $\mathrm{p}_r(\rho,e_r)$ and for $a$ in \cref{eq:sol-PR-relax} in place of the Lagrangian sound speed $\rho c_\gamma(\rho,\mathrm{p}_r)=\sqrt{\gamma\rho\mathrm{p}_r}$. We first recall the model in \cref{sec:appendix_press_relax_sys} and derive the numerical scheme \cref{eq:3pt-scheme-relax} and entropy inequality \cref{eq:3pt-scheme-equal-relax} in \cref{sec:appendix_press_relax_flux}. We refer to \cite[Sec.~2.4]{bouchut_04} or \cite{coquel_etal_relax_fl_sys_12} for complete introductions and in-depth analyses.

\subsection{Pressure-based relaxation system}\label{sec:appendix_press_relax_sys}

This system is here adapted from \cite[Sec.~ 2.4]{bouchut_04} to multiple components and reads

\begin{equation}\label{eq:press-relaxation-sys}
 \partial_t{\bf w}_r + \nabla\cdot{\bf g}_r({\bf w}_r) = 0, \quad
 {\bf w}_r = \begin{pmatrix}\rho {\bf Y} \\ \rho \\ \rho {\bf v} \\ \rho E_r \\ \rho e_s \\ \tfrac{\rho\pi}{a^2} \\ \rho a \end{pmatrix}, \quad
{\bf g}_r({\bf w}_r) = \begin{pmatrix} \rho {\bf Y}{\bf v}^\top \\ \rho {\bf v}^\top \\ \rho {\bf v}{\bf v}^\top+\pi{\bf I} \\ \big(\rho E_r+\pi\big){\bf v}^\top \\ \rho e_s {\bf v}^\top \\ \tfrac{\rho\pi}{a^2}{\bf v}^\top + {\bf v}^\top \\ \rho a {\bf v}^\top \end{pmatrix}.
\end{equation}

Following \cite[Sec.~2.4]{bouchut_04}, the relaxation mechanisms are not included in \cref{eq:press-relaxation-sys}, but are replaced by time discrete projection onto the equilibrium manifold $\{{\bf w}_r:\;\pi=\mathrm{p}_r(\rho,e_{r})\}$. The system is hyperbolic in the direction ${\bf n}$ with eigenvalues ${\bf v}\cdot{\bf n}-a/\rho$, ${\bf v}\cdot{\bf n}$, and ${\bf v}\cdot{\bf n}+a/\rho$ associated to LD fields. The exact solution to the Riemann problem for \cref{eq:press-relaxation-sys} with initial data ${\bf w}_{r,0}(x) = {\bf w}_{r,L}$ if $x:={\bf x}\cdot{\bf n}<0$ and ${\bf w}_{r,0}(x) = {\bf w}_{r,R}$ if ${\bf x}\cdot{\bf n}>0$ reads

\begin{equation}\label{eq:press-relax-RP-sol}
 \bcW_r^{\pi}(\tfrac{x}{t};{\bf w}_{r,L},{\bf w}_{r,R},{\bf n}) = \left\{ \begin{array}{ll}  {\bf w}_{r,L}, & \tfrac{x}{t} < u_L-a_L/\rho_L, \\
 {\bf w}_{r,L}^\star, & u_L-a_L/\rho_L<\tfrac{x}{t}<u^\star,\\
 {\bf w}_{r,R}^\star, & u^\star<\tfrac{x}{t}<u_R+a_R/\rho_R,\\
 {\bf w}_{r,R}, & u_R+a_R/\rho_R < \tfrac{x}{t}, \end{array} \right.
\end{equation}

\noindent where $u_X={\bf v}_X\cdot{\bf n}$ and ${\bf w}_{r,X}^\star=\big(\rho_X^\star {\bf Y}_X^\top, \rho_X^\star, \rho_X^\star {\bf v}_X^{\star\top},\rho_X^\star E_{r,X}^\star,\rho_X^\star e_{s,X}, \tfrac{\rho_X^\star\pi^\star}{a_X^2},\rho_X^\star a_X\big)^\top$ for $X=L,R$. The quantities $\rho_X^\star$ and ${\bf v}_X^{\star}$ are defined in \cref{eq:sol-PR-relax}, while

\begin{subequations}\label{eq:press-relax-RP-sol-details}
\begin{align}
u^\star  &= \frac{a_Lu_L+a_Ru_R+\pi_L-\pi_R}{a_L+a_R}, \quad \pi^\star = \frac{a_R\pi_L+a_L\pi_R+a_La_R(u_L-u_R)}{a_L+a_R}, \\ E_{r,L}^\star &= E_{r,L} - \frac{\pi^\star u^\star - \mathrm{p}_{r,L}u_L}{a_L},\quad E_{r,R}^\star = E_{r,R} - \frac{\mathrm{p}_{r,R}u_R-\pi^\star u^\star}{a_R}.
\end{align}
\end{subequations}

From \cref{eq:def_zeta_tau} we write $\zeta({\bf Y},\tau,e_r,e_s)=-\tfrac{r({\bf Y})}{\gamma-1}\ln(\tau^{\gamma-1}e_r)+\varsigma_r({\bf Y},e_s)$ and introduce the quantities $X=e_r-\tfrac{\pi}{2a^2}$ and $I=\pi+a^2\tau$. For smooth solutions of \cref{eq:press-relaxation-sys} we have
\begin{equation*}
 \partial_ta+{\bf v}\cdot\nabla a = 0, \quad \partial_tX+{\bf v}\cdot\nabla X = 0, \quad \partial_tI+{\bf v}\cdot\nabla I = 0, \quad \partial_te_s+{\bf v}\cdot\nabla e_s = 0,
\end{equation*}

\noindent so the function $\zeta^r({\bf Y},\tau,e_r,e_s,\pi,a)=-\tfrac{r({\bf Y})}{\gamma-1}\ln\big(\tilde{\tau}(a,X,I)^{\gamma-1}\tilde{e_r}(a,X,I)\big)+\varsigma_r({\bf Y},e_s)$ satisfies
\begin{equation}\label{eq:press-relax-entropy-pde}
 \partial_t\rho\zeta^r+\nabla\cdot(\rho\zeta^r{\bf v}) = 0,
\end{equation}

\noindent where the functions satisfy $\tilde{\tau}(a,X,I)=\tau$ and $\tilde{e_r}(a,X,I)=e_r$ at equilibrium $\pi=\mathrm{p}_r(\rho,e_r)$ and are defined in \cite{berthon_05}. Moreover, under the subcharacteristic condition 
\begin{equation}\label{eq:subcharact_cond}
 a \geq  \sqrt{\gamma\rho\mathrm{p}_r(\rho,e_r)} \quad \forall \rho > 0, e_r > 0, 
\end{equation}

\noindent we have the following minimization principle
\begin{equation}\label{eq:press-relax-var-principle}
  \zeta({\bf Y},\tau,e_r,e_s)=\min_{\pi\in\mathbb{R}}\zeta^r({\bf Y},\tau,e_r,e_s,\pi,a),
\end{equation}  

\noindent and the minimum is reached at equilibrium $\pi=\mathrm{p}_r(\rho,e_r)$ \cite{berthon_05}. 

\subsection{Numerical scheme for \cref{eq:relax-nrj-HRM-sys-homogeneous}}\label{sec:appendix_press_relax_flux}

The numerical scheme \cref{eq:3pt-scheme-relax} for \cref{eq:relax-nrj-HRM-sys-homogeneous} is based on the pressure relaxation system \cref{eq:press-relaxation-sys} and uses two steps between times $t^{(n)}$ and $t^{(n+1)}=t^{(n)}+\Delta t$: an evolution step between $t^{(n)}$ and $t^{(n+1)-}=t^{(n)}+\Delta t$ and an instantaneous projection step from $t^{(n+1)-}$ to $t^{(n+1)}$. 

In the evolution step, we solve the Cauchy problem \cref{eq:press-relaxation-sys} with initial data ${\bf w}_{r,0}(x)={\bf W}_{r,j}^n=\big(({\bf W}_j^n)^\top,\tfrac{\rho_j^n\pi_j^n}{(a_j^n)^2},\rho_j^na_j^n\big)^\top$ for $x$ in the $j$th cell $\kappa_j$, where ${\bf W}_j^n$ are the DOFs in \cref{eq:3pt-scheme-relax}, and $\pi_j^n$ and $a_j^n$ will be defined below. The solution ${\bf w}_{r,h}(x,t)$ to this problem consists in the juxtaposition of Riemann problem solutions \cref{eq:press-relax-RP-sol} at each mesh interface. Under the condition \cref{eq:3-pt-CFL_cond} on the time step, these solutions do not interact. Let consider $\rho\zeta^r$ as a function of ${\bf w}_r$, integrating \cref{eq:press-relax-entropy-pde} on the rectangle $\kappa_j\times[t^{(n)},t^{(n+1)-}]$ we obtain
\begin{equation}\label{eq:press-relax-entropy-equality}
 \big\langle \rho\zeta^r\big({\bf w}_{r,h}(x,t^{(n+1)-})\big) \big\rangle_j - \rho\zeta^r({\bf W}_{r,j}^n) + \tfrac{\Delta t}{\Delta x}\Big( H_{\rho\zeta^r}\big({\bf W}_{r,j}^n,{\bf W}_{r,j+1}^n,{\bf n}\big) - H_{\rho\zeta^r}\big({\bf W}_{r,j-1}^n,{\bf W}_{r,j}^n,{\bf n}\big) \Big) = 0.
\end{equation}

\noindent where $H_{\rho\zeta^r}({\bf W}_{r,j}^n,{\bf W}_{r,j+1}^n,{\bf n})=\{\rho\zeta^r{\bf v}\cdot{\bf n}\}\big(\bcW_r^{\pi}(0,{\bf W}_{r,j}^n,{\bf W}_{r,j+1}^n,{\bf n})\big)$ denotes the entropy flux evaluated at the interface from \cref{eq:press-relax-RP-sol} and $\langle\cdot\rangle_j$ is the cell average. 

In the projection step, the solution at time $t^{(n+1)-}$ is projected onto the equilibrium manifold $\{{\bf w}_r:\;\pi=\mathrm{p}_r(\rho,e_{r})\}$ which amounts to impose $\pi_j^{n+1}=\mathrm{p}_r(\rho_j^{n+1},e_{r,j}^{n+1})$, so $\rho\zeta^r({\bf W}_{r,j}^{n+1})=\rho\zeta({\bf W}_j^{n+1})$. Using the minimization principle \cref{eq:press-relax-var-principle} and then the Jensen's inequality applied to the convex function $\rho\zeta({\bf w})$ we obtain
\begin{equation}\label{eq:press-relax-entropy-projection}
 \big\langle \rho\zeta^r\big({\bf w}_{r,h}(x,t^{(n+1)-})\big) \big\rangle_j \geq \big\langle \rho\zeta\big({\bf w}_{h}(x,t^{(n+1)})\big) \big\rangle_j \geq \rho\zeta({\bf W}_j^{n+1}).
\end{equation}

By combining \cref{eq:press-relax-entropy-equality,eq:press-relax-entropy-projection}, we obtain \cref{eq:3pt-scheme-equal-relax}
%
with
\begin{equation}\label{eq:3pt-scheme-equal-relax-details}
  Z({\bf W}_{j}^{n},{\bf W}_{j+1}^{n},{\bf n})=H_{\rho\zeta^r}({\bf W}_{r,j}^n,{\bf W}_{r,j+1}^n,{\bf n}), \quad {\bf W}_{r,j}^n=\big(({\bf W}_j^n)^\top,\tfrac{\rho_j^n\mathrm{p}_r(\rho_j^n,e_{r,j}^n)}{(a_j^n)^2},\rho_j^na_j^n\big)^\top,
\end{equation}

\noindent where the coefficients $a_j^n$ will be defined below to guaranty to satisfy the subcharacteristic condition \cref{eq:subcharact_cond} and the positivity of the solution, ${\bf W}_j^{n+1}\in\Omega^r$, \cite{berthon_05}. Indeed, observe that the variables $(\rho,\rho{\bf v}^\top,\rho E_r,\tfrac{\rho\pi}{a^2},\rho a)^\top$ in \cref{eq:press-relaxation-sys} are uncoupled from $\rho{\bf Y}$ and $\rho e_s$, so the associated equations correspond to the relaxation approximation from \cite[Sec.~ 2.4]{bouchut_04} for the Euler equations with a perfect gas equation of state \cref{eq:EOS-relax}. Applying \cite[Prop.~2.21]{bouchut_04}, the three-point scheme \cref{eq:3pt-scheme-relax} guaranties positivity of $\rho$ and $e_r$ under the CFL condition
\begin{equation}\label{eq:3-pt-CFL_cond-relax}
 \frac{\Delta t}{\Delta x}\max_{j\in\mathbb{Z}}|\lambda({\bf W}_j^{n})| < \frac{1}{2}, \quad |\lambda({\bf w})| := |{\bf v}\cdot{\bf n}|+\tfrac{a}{\rho}.
\end{equation}

Positivity of $\rho{\bf Y}$ and $\rho e_s$ then follows from averaging the Riemann solution \cref{eq:press-relax-RP-sol} under \cref{eq:3-pt-CFL_cond-relax}.

The pressure relaxation-based numerical flux for \cref{eq:relax-nrj-HRM-sys-homogeneous} thus reads 
\begin{equation}\label{eq:relax_flux_for_nrj_sys}
 {\bf H}({\bf w},{\bf w},{\bf n}) = {\bf g}\big(\bcW^{\pi}(0;{\bf w}^-,{\bf w}^+,{\bf n})\big)\cdot{\bf n},
\end{equation}

\noindent where $\bcW^\pi(\cdot;{\bf w}_L,{\bf w}_R,{\bf n})$ is used to approximate the solution to the Riemann problem for \cref{eq:relax-nrj-HRM-sys-homogeneous} with initial data ${\bf w}_0(x) = {\bf w}_L$ if $x:={\bf x}\cdot{\bf n}<0$ and ${\bf w}_0(x) = {\bf w}_R$ if ${\bf x}\cdot{\bf n}>0$:

\begin{equation}\label{eq:bouchut-relax-ARS-xt}
 \bcW^{\pi}(\tfrac{x}{t};{\bf w}_L,{\bf w}_R,{\bf n}) = \left\{ \begin{array}{ll}  {\bf w}_L, & \tfrac{x}{t} < S_L, \\
 {\bf w}_L^\star, & S_L<\tfrac{x}{t}<u^\star,\\
 {\bf w}_R^\star, & u^\star<\tfrac{x}{t}<S_R,\\
 {\bf w}_R, & S_R < \tfrac{x}{t}, \end{array} \right.
\end{equation}

\noindent where ${\bf w}_X^\star=(\rho_X^\star {\bf Y}_X^\top, \rho_X^\star, \rho_X^\star {\bf v}_X^{\star\top},\rho_X^\star E_{r,X}^\star, \rho_X^\star e_{s,X})^\top$, for $X=L,R$, with $\rho_X^\star$ and ${\bf v}_X^{\star}$ defined in \cref{eq:sol-PR-relax}, $E_{r,X}^{\star}$ in \cref{eq:press-relax-RP-sol-details}, and $S_L=u_L-a_L/\rho_L$ and $S_R=u_R+a_R/\rho_R$ from \cref{eq:wave-estimate} by using $\p_r$ instead of $\p$.

%
%
\section{EC numerical flux plus dissipation at interfaces}\label{sec:appendix_ES_flux}

One common way to design an ES two-point numerical flux relies on adding upwind-type dissipation to EC numerical fluxes and has been first proposed by Roe \cite{Roe_2006,ismail_roe_09} (see also the introduction in \cref{sec:intro} for more references). We here derive such a numerical flux and show some numerical results for the sake of comparison. We recall that this approach does not provide Lipschitz continuous numerical fluxes and prevents to prove positivity of the solution. Moreover, such numerical fluxes use expensive operations such as the logarithmic mean \cite{ismail_roe_09}. 

\subsection{derivation of the ES numerical flux}\label{sec:deriv_ES_flux}

We define the ES flux as the sum of the EC flux \cref{eq:EC_flux} and dissipation:
\begin{equation}\label{eq:ES_flux}
 {\bf h}({\bf u}^-,{\bf u}^+,{\bf n}) = {\bf h}_{ec}({\bf u}^-,{\bf u}^+,{\bf n}) - \frac{\nu_{AD}}{2}\big|{\bf f}'({\bf u})\big|\mathbfcal{D}_\eta\du\etab'({\bf u})\df,
\end{equation}

\noindent where $\nu_{AD}>0$ is a parameter and $\mathbfcal{D}_\eta$ is a symmetric positive definite matrix. We use the scalar dissipation operator $\big|{\bf f}'({\bf u})\big|=\max\big(\lambda^\pm:\;\lambda=|{\bf v}\cdot{\bf n}|+c(\rho,e)\big){\bf I}_{n_c+d+1}$ for its robustness compared to other matrix dissipation types \cite{winters_etal_16}. We here follow the work in \cite{winters_etal_16} and look for a dissipation of the form $\mathbfcal{D}_\eta\du\etab'({\bf u})\df=\mathbfcal{D}_{\bf u}\du{\bf u}\df$. The dissipation reads
\begin{equation*}
 \mathbfcal{D}_{\bf u}\du{\bf u}\df = \begin{pmatrix} 0 \\ \vdots \\ 0 \\ \tfrac{r_{n_c}}{r(\ol{\bf Y})}\du\rho_{n_c}\df \\ \tfrac{r_{n_c}}{r(\ol{\bf Y})}\du\rho_{n_c}\df\ol{\bf v} + \widehat{\rho}\du{\bf v}\df \\ \tfrac{r_{n_c}}{r(\ol{\bf Y})}\Big(\tfrac{C_{v_{n_c}}}{\widehat{\theta}}+\tfrac{{\bf v}^+\cdot{\bf v}^-}{2}\Big)\du\rho_{n_c}\df + \widehat{\rho}\ol{\bf v}\cdot\du{\bf v}\df+\widehat{\rho}C_v(\ol{\bf Y})\du\T\df \end{pmatrix},
\end{equation*}

\noindent with $\theta=\tfrac{1}{\T}$, which results in the following entropy dissipation rate 
\begin{align*}
 \du\etab'({\bf u})\df\cdot\mathbfcal{D}_{\bf u}\du{\bf u}\df &\overset{\cref{eq:entropy_var}}{=} \du C_{p_{n_c}}-\mathrm{s}_{n_c}-\tfrac{{\bf v}\cdot{\bf v}}{2}\theta \df\tfrac{r_{n_c}}{r(\ol{\bf Y})}\du\rho_{n_c}\df + (\du\theta\df\ol{\bf v} + \ol{\theta}\du{\bf v}\df)\cdot\Big(\tfrac{r_{n_c}}{r(\ol{\bf Y})}\du\rho_{n_c}\df\ol{\bf v} + \widehat{\rho}\du{\bf v}\df\Big) \\
 &- \du\theta\df\bigg(\Big(\tfrac{C_{v_{n_c}}}{\widehat{\theta}}+\tfrac{{\bf v}^+\cdot{\bf v}^-}{2}\Big)\tfrac{r_{n_c}}{r(\ol{\bf Y})}\du\rho_{n_c}\df + \widehat{\rho}\big(\ol{\bf v}\cdot\du{\bf v}\df+C_v(\ol{\bf Y})\du\T\df\big)\bigg) \\
 &\overset{\cref{eq:partial_entropy}}{=} \tfrac{r_{n_c}^2}{r(\ol{\bf Y})}\du\rho_{n_c}\df\du\ln\rho_{n_c}\df + \widehat{\rho}\ol{\theta}\du{\bf v}\df\cdot\du{\bf v}\df - \Big(\ol{\tfrac{{\bf v}\cdot{\bf v}}{2}}-\ol{\bf v}\cdot{\bf v} + \tfrac{{\bf v}^+\cdot{\bf v}^-}{2}\Big)\tfrac{r_{n_c}}{r(\ol{\bf Y})}\du\rho_{n_c}\df\du\theta\df - \widehat{\rho}C_v(\ol{\bf Y})\du\T\df\du\theta\df\\
 &= \tfrac{r_{n_c}^2}{r(\ol{\bf Y})}\du\rho_{n_c}\df\du\ln\rho_{n_c}\df + \widehat{\rho}\ol{\theta}\du{\bf v}\df\cdot\du{\bf v}\df + \widehat{\rho}C_v(\ol{\bf Y})\tfrac{\ol{\theta}}{\ol{\T}}\du\T\df^2,
\end{align*}

\noindent \blue{which is a sum of positive terms for all ${\bf u}$ in $\Omega^a$}, so \cref{eq:ES_flux} is indeed ES in the sense of \cref{eq:entropy_stable_flux}:
\blue{
\begin{align*}
 \du \etab'({\bf u}) \df \cdot {\bf h}({\bf u}^-,{\bf u}^+,{\bf n}) - \du \psib({\bf u})\cdot{\bf n} \df &\overset{\cref{eq:ES_flux}}{=} \du \etab'({\bf u}) \df \cdot {\bf h}_{ec}({\bf u}^-,{\bf u}^+,{\bf n}) - \du \psib({\bf u})\cdot{\bf n} \df - \frac{\nu_{AD}}{2}\big|{\bf f}'({\bf u})\big|\du\etab'({\bf u})\df\cdot\mathbfcal{D}_\eta\du\etab'({\bf u})\df \\
 &\overset{\cref{eq:entropy_conserv_flux}}{=} - \frac{\nu_{AD}}{2}\big|{\bf f}'({\bf u})\big|\du\etab'({\bf u})\df\cdot\mathbfcal{D}_{\bf u}\du{\bf u}\df \leq 0.
\end{align*}
}

Note that we do not add numerical dissipation to the mass fraction equations as they are associated to a LD field and remain uniform across shocks. Likewise, the choice of the coefficient before $\du\rho_{n_c}\df$ is motivated by \cref{eq:partial_densities} so we are approximating the jump in $\rho$ with $r_{n_c}\du\rho_{n_c}\df/r(\ol{\bf Y})$.

\subsection{numerical experiments}\label{sec:num_xp_ES_flux}

\Cref{fig:solution_RP2_esf,fig:solution_SBI_esf} display some results obtained with the ES numerical flux \cref{eq:ES_flux} at interfaces and the EC flux \cref{eq:EC_flux} in the evaluation of the volume terms. Our numerical experiments highlight robustness issues, which led us to reduce the time step $\Delta t\max_{\kappa\in X_h}\tfrac{1}{|\kappa|}\sqrt{\sum_{e\in\kappa}|e|^2}|\lambda(\langle{\bf u}_h^{(n)}\rangle_\kappa)|\leq0.2$, where $|\lambda({\bf u})|=|{\bf v}\cdot{\bf n}|+c({\bf Y},e)$, compared to a $0.4$ bound with the relaxation-based numerical flux \cref{eq:relax_flux}. The viscosity coefficient $\nu_{AD}$ in \cref{eq:ES_flux} was chosen based on a parametric study to keep robustness of the computations as long as possible. We recall that such robustness issues were already reported in \cite[Sec.~5.4]{gouasmi_etal_20}. The time discretization and limiter are the same as used in \cref{sec:num_xp}. We display partial results of the 2D computations which did not go to their end and were stopped due to negative solution at some integration point. Compared to the results obtained with \cref{eq:relax_flux} at interfaces, we observe a similar behavior in the 1D Riemann problem in \cref{fig:solution_RP2_esf} \red{and spurious oscillations around shocks in the 2D experiments in \cref{fig:solution_SBI_esf}.} It is certainly possible to reduce such oscillations and improve robustness of the method by carefully designing the artificial dissipation in \cref{eq:ES_flux}, but this is beyond the scope of the present study.

\begin{figure}
\begin{bigcenter}
\subfloat[$Y_1$]{\epsfig{figure=\LOCALFIGPATHRP 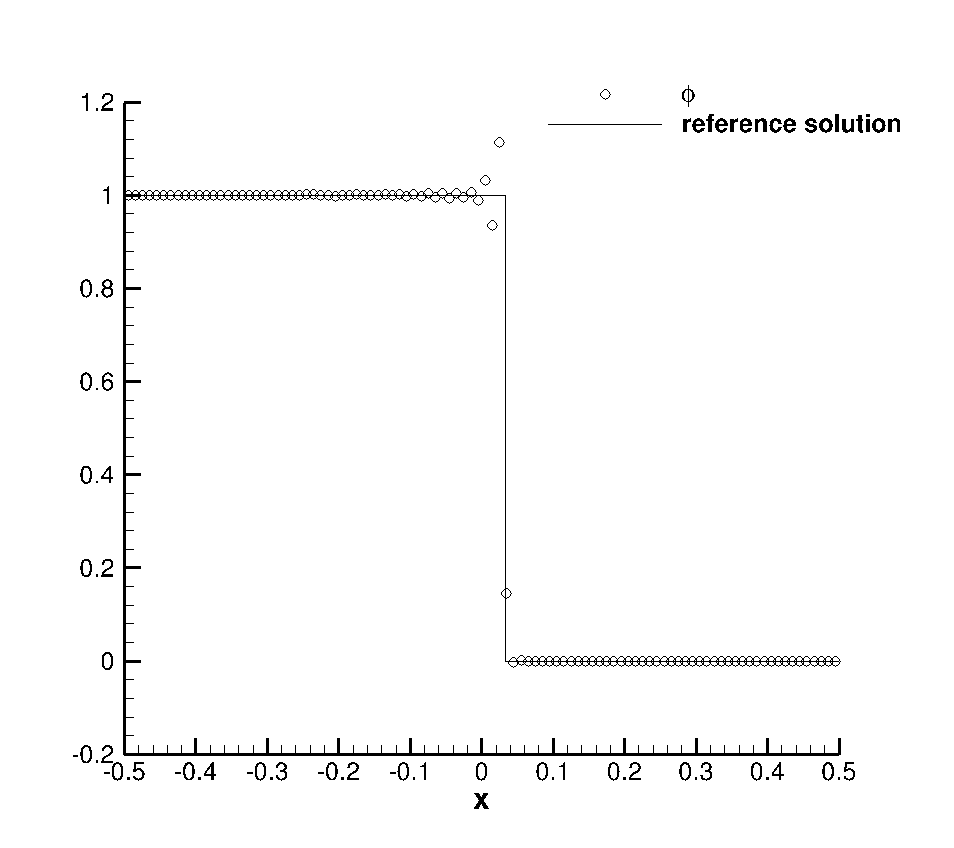 ,width=4.5cm}}
\subfloat[$\rho$]{\epsfig{figure=\LOCALFIGPATHRP 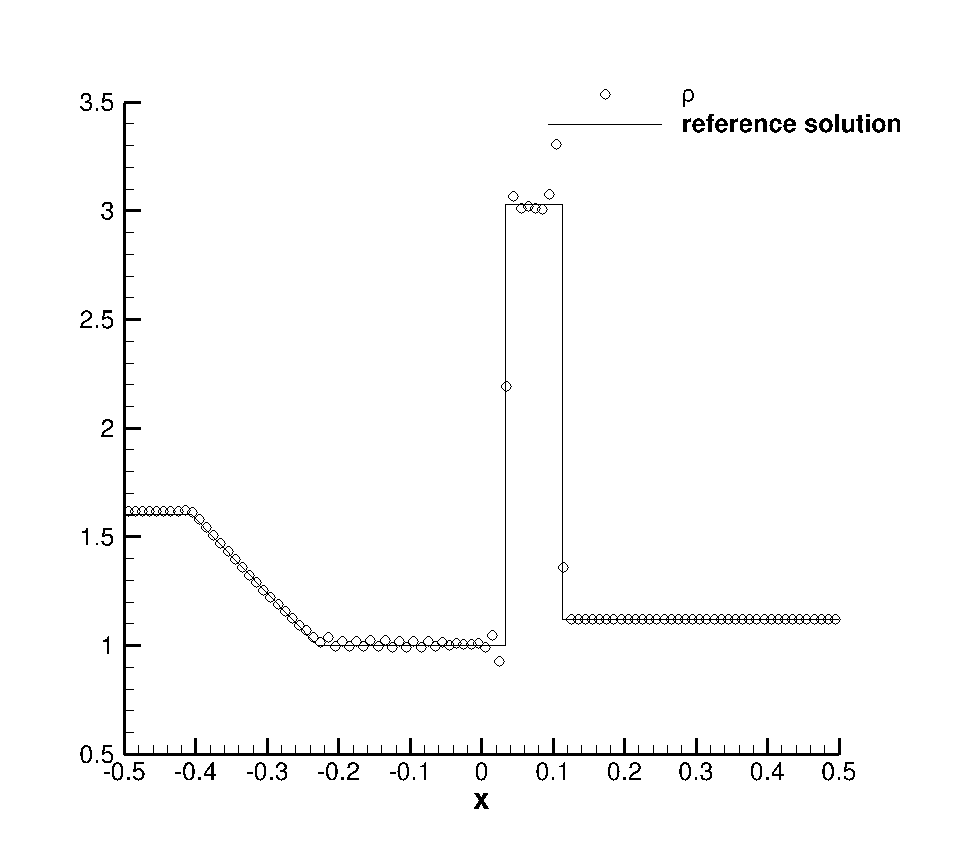 ,width=4.5cm}}
\subfloat[$10^{-3}\times u$]{\epsfig{figure=\LOCALFIGPATHRP 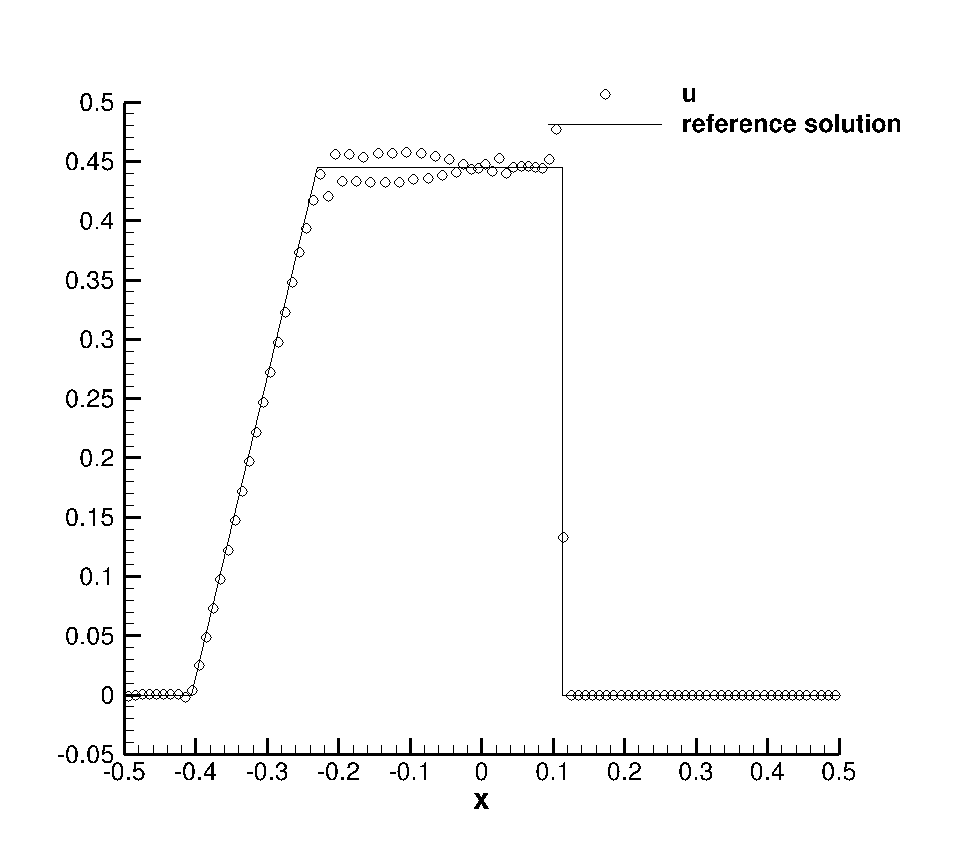 ,width=4.5cm}}
\subfloat[$10^{-6}\times \p$]{\epsfig{figure=\LOCALFIGPATHRP 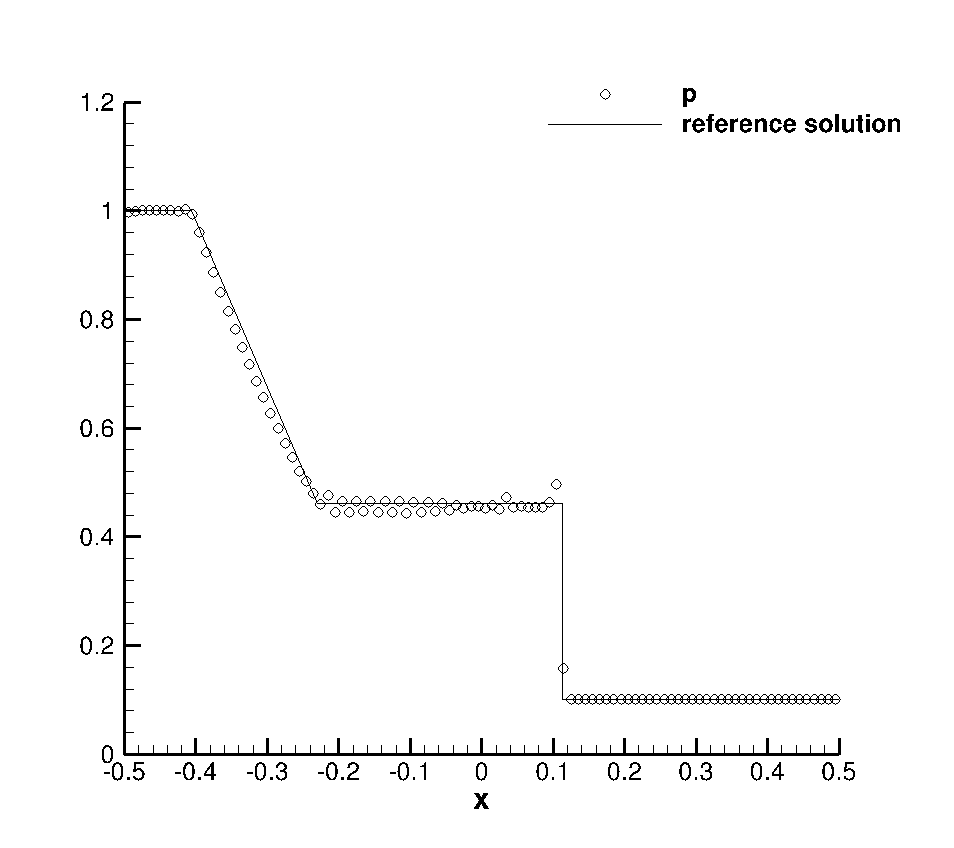 ,width=4.5cm}}
\caption{Riemann problem RP2 obtained with the Roe-like ES numerical flux \cref{eq:ES_flux} (with $\nu_{AD}=0.5$) at interfaces, $p=3$, and $N=100$ cells.}
\label{fig:solution_RP2_esf}
\end{bigcenter}
\end{figure}

\begin{figure}
\begin{bigcenter}
\captionsetup[subfigure]{labelformat=empty}
%
\subfloat[$t=32\;\mu$s]{\epsfig{figure=\LOCALFIGPATHSBIKT 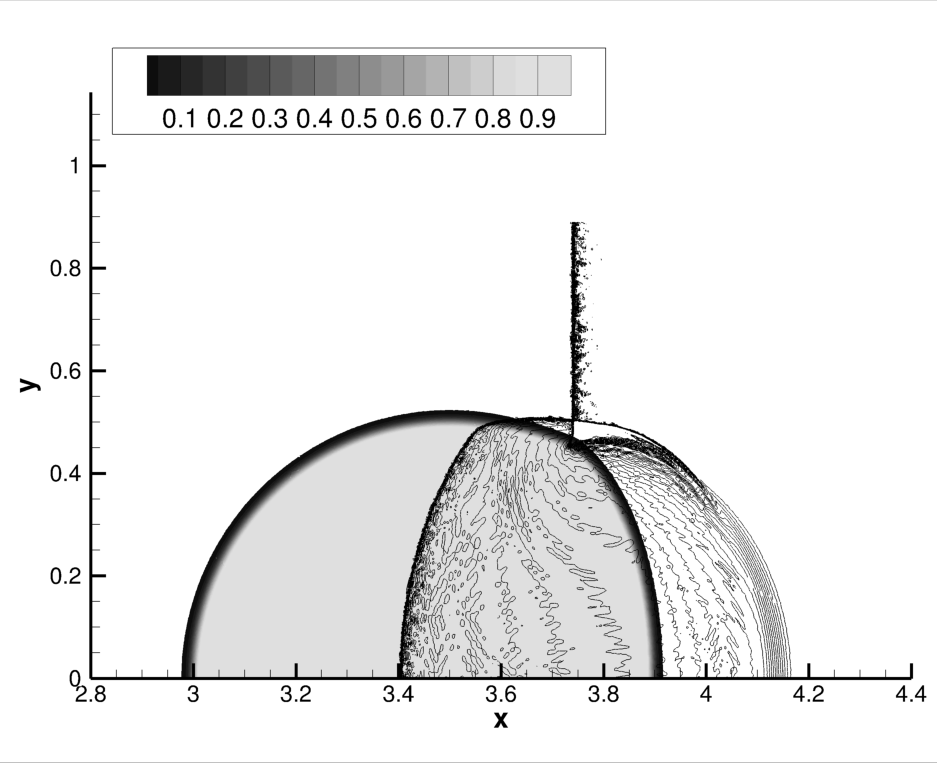,width=3.8cm}}
\subfloat{\epsfig{figure=\LOCALFIGPATHSBIKT 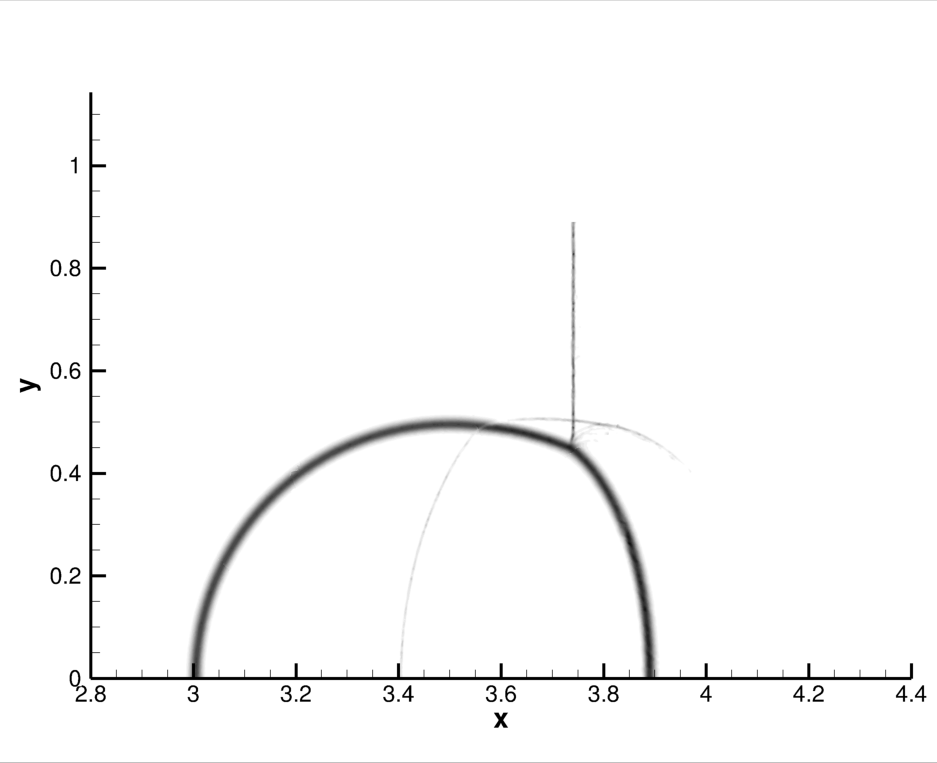,width=3.8cm}}
\subfloat[$t=102\;\mu$s]{\epsfig{figure=\LOCALFIGPATHSBIKT 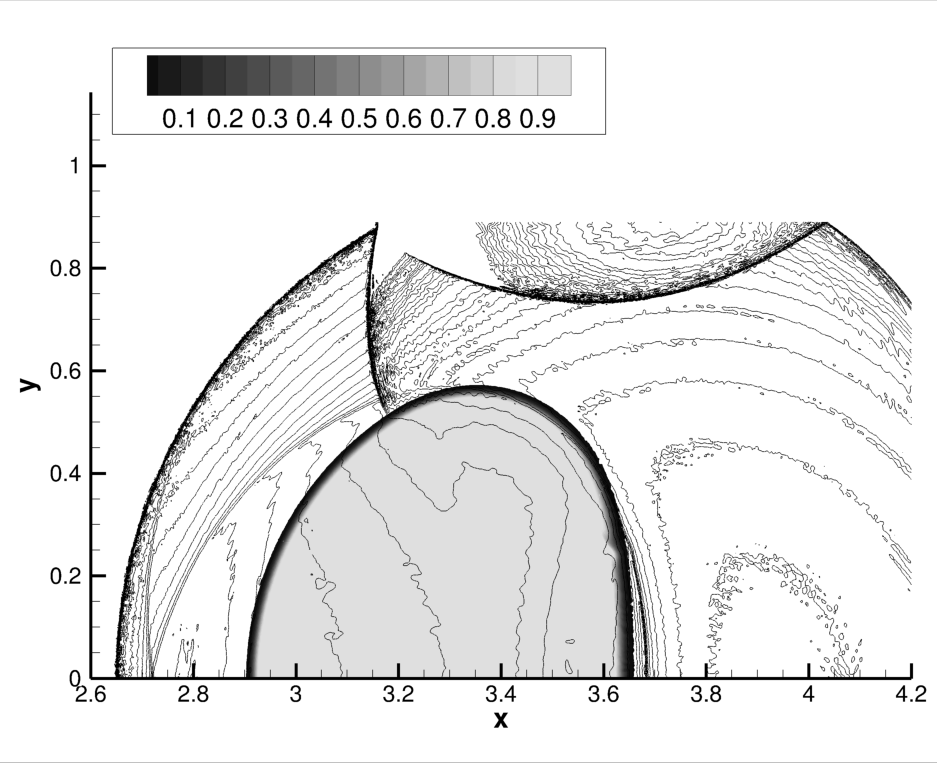,width=3.8cm}}
\subfloat{\epsfig{figure=\LOCALFIGPATHSBIKT 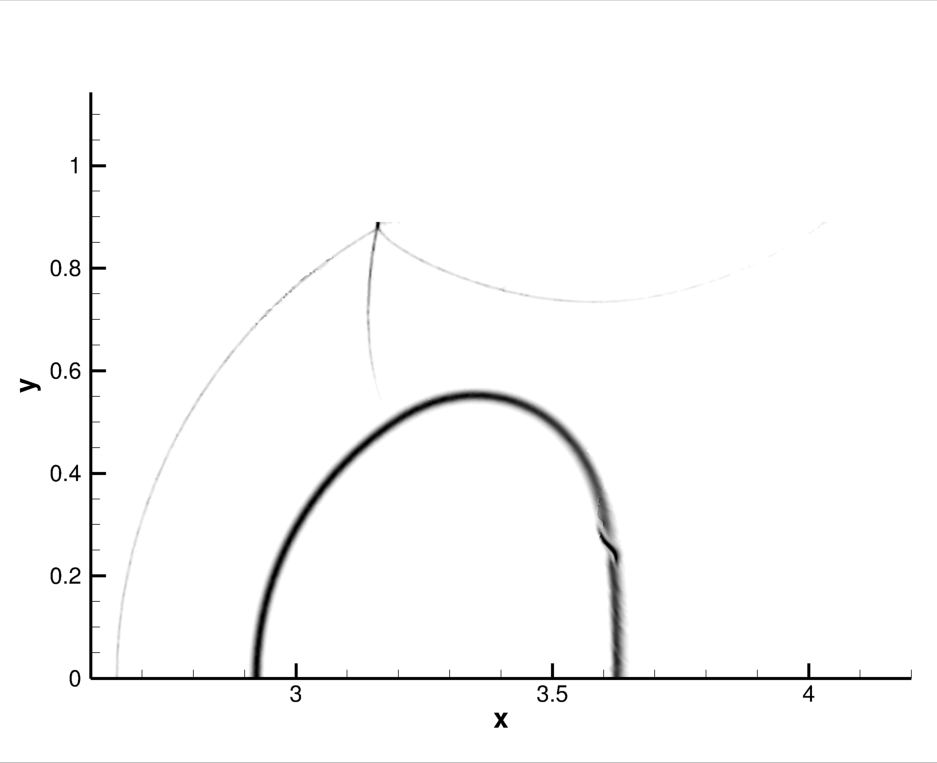,width=3.8cm}} \\
\subfloat[$t=62\;\mu$s]{\epsfig{figure=\LOCALFIGPATHSBIKT 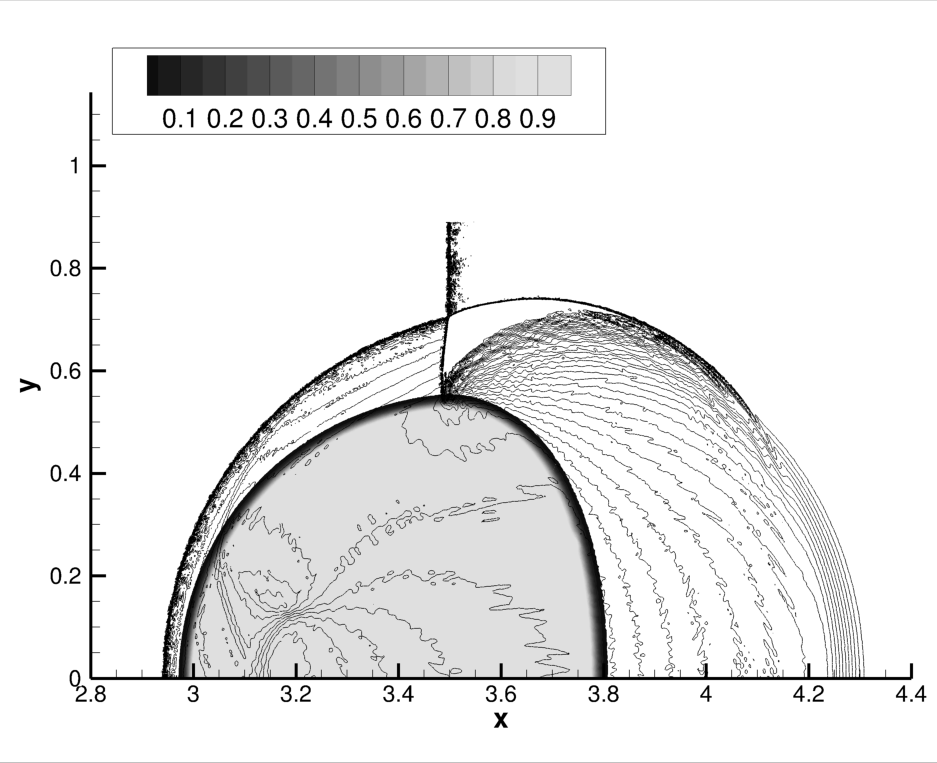,width=3.8cm}}
\subfloat{\epsfig{figure=\LOCALFIGPATHSBIKT 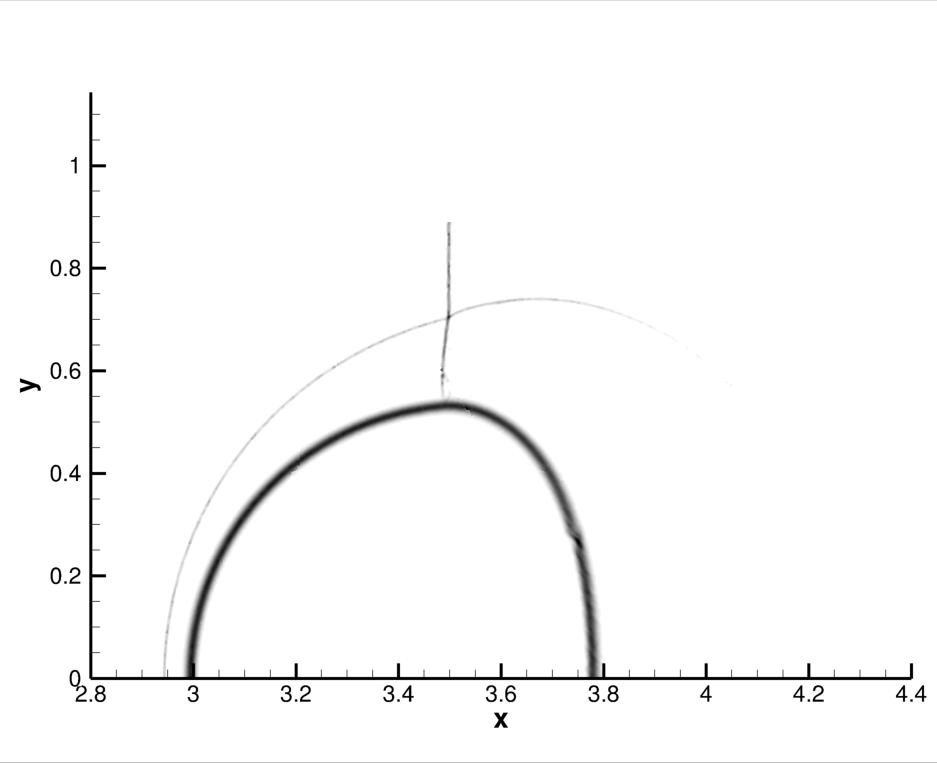,width=3.8cm}}
\subfloat[$t=240\;\mu$s]{\epsfig{figure=\LOCALFIGPATHSBIKT 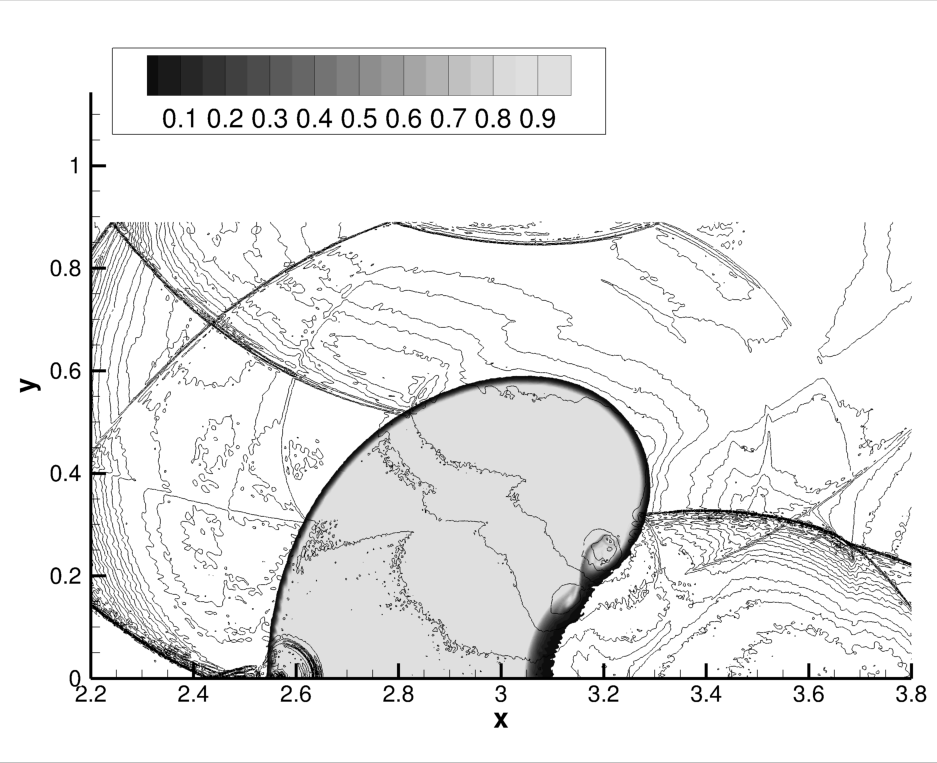,width=3.8cm}}
\subfloat{\epsfig{figure=\LOCALFIGPATHSBIKT 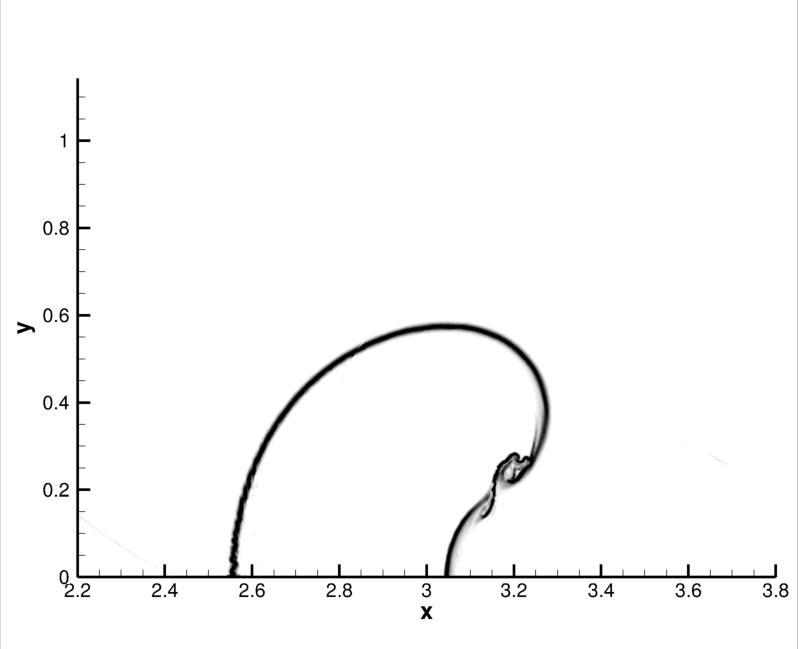,width=3.8cm}} 
\caption{Shock wave-helium bubble interaction: $70$ pressure contours (lines) and $20$ void fraction contours (colors), and Schlieren $|\nabla\rho|/\rho$ obtained with the Roe-like ES numerical flux \cref{eq:ES_flux} (with $\nu_{AD}=0.15$) at interfaces, $p=3$, and $N=238,673$ cells (see \cref{fig:solution_SBI} for details on the contours).
\label{fig:solution_SBI_esf}}
\end{bigcenter}
\end{figure}

%
%
\bibliographystyle{siam}
\bibliography{./biblio_generale}

\end{document}